	\newcommand{\Ext}{\ensuremath{\operatorname{Ext}}}
	\newcommand{\multialg}[1]{\mathscr{M}(#1)\xspace}
	\newcommand{\corona}[1]{\mathscr{Q}(#1)\xspace}	
	\newcommand{\Prim}{\ensuremath{\operatorname{Prim}}}
	\newcommand{\nuc}{\mathrm{nuc}}
	\theoremstyle{plain}
	\newtheorem{thm}{Theorem}[section]
	\newtheorem{lemma}[thm]{Lemma}
	\newtheorem{theorem}[thm]{Theorem}
	\newtheorem{proposition}[thm]{Proposition}
	\newtheorem{corollary}[thm]{Corollary}
	\theoremstyle{definition}
	\newtheorem{definition}[thm]{Definition}
	\newtheorem{remark}[thm]{Remark}
	\newtheorem{notation}[thm]{Notation}
	\newtheorem{example}[thm]{Example}
	\newtheorem{observation}[thm]{Observation}
	\numberwithin{equation}{section}
	\numberwithin{figure}{section}
\begin{document}
	\title{Lifting theorems for completely positive maps}
	\author{James Gabe}
	     \address{Current address: Department of Mathematics and Computer Science, University of Southern Denmark, Odense, Denmark.}
        \address{Mathematical Sciences \\
        University of Southampton\\
        University Road\\
        Southampton SO17 1BJ, United Kingdom}
        \email{gabe@imada.sdu.dk}
	\subjclass[2000]{46L05, 46L35, 46L80}
	\keywords{Lifting completely positive maps, ideal related $KK$-theory, strongly self-absorbing $C^\ast$-algebras}
        \thanks{This work was supported by the Danish National Research Foundation through the Centre for Symmetry and Deformation (DNRF92).}

\begin{abstract}
We prove lifting theorems for completely positive maps going out of exact $C^\ast$-algebras, where we remain in control of which ideals are mapped into which.
A consequence is, that if $\mathsf X$ is a second countable topological space, $\mathfrak A$ and $\mathfrak B$ are separable, nuclear $C^\ast$-algebras over $\mathsf X$, 
and the action of $\mathsf X$ on $\mathfrak A$ is continuous, 
then $E(\mathsf X; \mathfrak A, \mathfrak B) \cong KK(\mathsf X; \mathfrak A, \mathfrak B)$ naturally.

As an application, we show that a separable, nuclear, strongly purely infinite $C^\ast$-algebra $\mathfrak A$ absorbs a strongly self-absorbing $C^\ast$-algebra $\mathscr D$ if and only if $\mathfrak I$ and $\mathfrak I\otimes \mathscr D$ are $KK$-equivalent for every two-sided, closed ideal $\mathfrak I$ in $\mathfrak A$. In particular, if $\mathfrak A$ is separable, nuclear, and strongly purely infinite, then $\mathfrak A \otimes \mathcal O_2 \cong \mathfrak A$ if and only if every two-sided, closed ideal in $\mathfrak A$ is $KK$-equivalent to zero.
\end{abstract}

\maketitle 

\section{Introduction}

Arveson was perhaps the first to recognise the importance of lifting theorems for completely positive maps. 
In \cite{Arveson-essnormal}, he uses a lifting theorem to give a simple and operator theoretic proof of the fact that the Brown--Douglas--Fillmore semigroup $\Ext(\mathsf X)$ is actually a group.
This was already proved by Brown, Douglas, and Fillmore in \cite{BrownDouglasFillmore-unitaryeq}, but the proof was somewhat complicated and very topological in nature.
All the known lifting theorems at that time were generalised by Choi and Effros \cite{ChoiEffros-lifting}, when they proved that any nuclear map going out of a separable $C^\ast$-algebra is liftable.
This result, together with the dilation theorem of Stinespring \cite{Stinespring-positive} and the Weyl--von Neumann type theorem of Voiculescu \cite{Voiculescu-WvN}, was used by Arveson \cite{Arveson-extensions}
to prove that the (generalised) Brown--Douglas--Fillmore semigroup $\Ext(\mathfrak A)$ defined in \cite{BrownDouglasFillmore-Ext} is a group for any unital, separable, nuclear $C^\ast$-algebra $\mathfrak A$.
When doing this, Arveson included a simplified proof of the lifting theorem of Choi and Effros, a proof which in many ways illustrates, that the Choi--Effros lifting theorem is a non-commutative analogue of the selection theorems of Michael \cite{Michael-selection}.

Kasparov \cite{Kasparov-KKExt} used the same idea as Arveson to prove that for any separable, nuclear $C^\ast$-algebra $\mathfrak A$ and any $\sigma$-unital $C^\ast$-algebra $\mathfrak B$, the semigroup $\Ext(\mathfrak A, \mathfrak B)$ is in fact a group.
It was also an application of the Choi--Effros lifting theorem, which allowed Kasparov to prove that the functor $KK(\mathfrak A, -)$ is half-exact for any separable, nuclear $C^\ast$-algebra $\mathfrak A$, and thus induces a six-term exact sequence for any 
short exact sequence of $\sigma$-unital $C^\ast$-algebras. 
This fails if one does not assume $\mathfrak A$ to be nuclear, which is basically due to the fact, that we can not lift completely positive maps in general.
So $KK$-theory lacks certain desirable properties such as excision, i.e.~that short exact sequences of $C^\ast$-algebras induce six-term exact sequences of $KK$-groups.
In an attempt to fix this ``defect'' of $KK$-theory, Higson \cite{Higson-catfrac} constructed $E$-theory, which resembles $KK$-theory quite a bit, but which is always half-exact.
As a consequence of the half-exactness of $KK(\mathfrak A, -)$ for separable, nuclear $C^\ast$-algebras $\mathfrak A$, it follows that $E(\mathfrak A,\mathfrak B) \cong KK(\mathfrak A, \mathfrak B)$ naturally, for such $\mathfrak A$.

We say that a topological space $\mathsf X$ acts on a $C^\ast$-algebra $\mathfrak A$, if there is an order preserving map from the lattice $\mathbb O(\mathsf X)$ of open subsets of $\mathsf X$,
to the lattice $\mathbb I(\mathfrak A)$ of two-sided, closed ideals in $\mathfrak A$. A map between such $C^\ast$-algebras is $\mathsf X$-equivariant, if it respects the action.
Kirchberg introduced a modified version of $KK$-theory for $C^\ast$-algebras with an action of $\mathsf X$, and proved the very deep result \cite{Kirchberg-non-simple} (see also \cite{Gabe-Oinfty}), that all separable, nuclear, strongly purely infinite 
$C^\ast$-algebras with a tight action of $\mathsf X$, are classified by $KK(\mathsf X)$-theory. Here tight refers to the action $\mathbb O(\mathsf X) \to \mathbb I(\mathfrak A)$ being a lattice isomorphism.
As it turns out, the functor $KK(\mathsf X; \mathfrak A, -)$ is not half-exact in general, not even when $\mathfrak A$ is nuclear.
This is mainly due to the lack of lifting theorems for completely positive maps, for which we preserve the action of $\mathsf X$.

In \cite{DadarlatMeyer-E-theory}, Dadarlat and Meyer construct a version of $E$-theory for $C^\ast$-algebras with an action of $\mathsf X$, which is half-exact, and which also possesses other nice properties which $KK(\mathsf X)$-theory does not enjoy.
Thus it would be desirable to find sufficient criteria for when $E(\mathsf X; \mathfrak A, \mathfrak B) \cong KK(\mathsf X; \mathfrak A, \mathfrak B)$ naturally, 
as it is known that nuclearity of $\mathfrak A$ does not suffice. 
This is the main motivation of this paper. We show that if $\mathfrak A$ and $\mathfrak B$ are nuclear, and if the actions of $\mathsf X$ on $\mathfrak A$ and $\mathfrak B$ satisfy certain continuity properties, then
$E(\mathsf X; \mathfrak A, \mathfrak B)\cong KK(\mathsf X; \mathfrak A, \mathfrak B)$ naturally.
This is done by proving that we may lift $\mathsf X$-equivariant completely positive maps, while preserving the $\mathsf X$-equivariant structure.

Combining this result with the deep classification result of Kirchberg \cite{Kirchberg-non-simple}, it follows that all separable, nuclear, strongly purely infinite $C^\ast$-algebras with a tight action of $\mathsf X$, are classified by $E(\mathsf X)$-theory. 
We apply this to show that if $\mathfrak A$ is a separable, nuclear, strongly purely infinite $C^\ast$-algebra, and $\mathscr D$ is a strongly self-absorbing $C^\ast$-algebra, then $\mathfrak A \cong \mathfrak A \otimes \mathscr D$ if and only if $\mathfrak I$ and $\mathfrak I\otimes \mathscr D$ are $KK$-equivalent for every two-sided, closed ideal $\mathfrak I$ in $\mathfrak A$.

In particular, let $\mathfrak A$ be a separable, nuclear, strongly purely infinite $C^\ast$-algebra, let $M_{n^\infty}$ denote the UHF algebra of type $n^{\infty}$, $\mathcal Q$ denote the universal UHF algebra, and $\mathcal O_2$ denote the Cuntz algebra. We show that:
\begin{itemize}
\item If all two-sided, closed ideals in $\mathfrak A$ satisfy the UCT, then $\mathfrak A \cong \mathfrak A \otimes M_{n^\infty}$ if and only if $K_\ast(\mathfrak I)$ is uniquely $n$-divisible for every two-sided, closed ideal $\mathfrak I$ in $\mathfrak A$.
\item If all two-sided, closed ideals in $\mathfrak A$ satisfy the UCT, then $\mathfrak A \cong \mathfrak A \otimes \mathcal Q$ if and only if $K_\ast(\mathfrak I)$ is uniquely divisible for every two-sided, closed ideal $\mathfrak I$ in $\mathfrak A$.
\item $\mathfrak A \cong \mathfrak A \otimes \mathcal O_2$ if and only if every two-sided, closed ideal in $\mathfrak A$ is $KK$-equivalent to zero.
\end{itemize}

The author has been made aware that Eberhard Kirchberg announced results partially overlapping with results presented here, at the 2009 Oberwolfach meeting ``C*-Algebren'', cf.~\cite{Kirchberg-OWR}, and thanks Ralf Meyer for pointing this out.

\section{A Hahn--Banach separation theorem for closed operator convex cones}

\begin{definition}\label{d:opconcon}
Let $\mathfrak A$ and $\mathfrak B$ be $C^\ast$-algebras and let $CP(\mathfrak A,\mathfrak B)$ denote the convex cone of all completely positive (c.p.) maps from $\mathfrak A$ to $\mathfrak B$. 
A subset $\mathscr C$ of $CP(\mathfrak A,\mathfrak B)$ is called an \emph{operator convex cone} if it satisfies the following:
\begin{itemize}
\item[$(1)$] $\mathscr C$ is a convex cone,
\item[$(2)$] If $\phi \in \mathscr C$ and $b$ in $\mathfrak B$ then $b^\ast \phi(-) b \in \mathscr C$,
\item[$(3)$] If $\phi \in \mathscr C$, $a_1,\dots,a_n \in \mathfrak A$, and $b_1,\dots,b_n \in \mathfrak B$ then the map
\begin{equation}\label{eq:basicmap}
\sum_{i,j=1}^n b_i^\ast \phi(a_i^\ast (-) a_j)b_j
\end{equation}
is in $\mathscr C$.
\end{itemize}
We equip $\mathscr C$ with the point-norm topology, and say that it is a \emph{closed} operator convex cone, if it is closed as a subspace of $CP(\mathfrak A,\mathfrak B)$.
\end{definition}

We will almost only be considering operator convex cones which are closed.

\begin{example}\label{e:nuclearmaps}
 A c.p.~map is called \emph{factorable} if it factors through a matrix algebra by c.p.~maps. The set $CP_\mathrm{fact}(\mathfrak A, \mathfrak B)\subseteq CP(\mathfrak A, \mathfrak B)$ of all factorable maps is an operator convex cone.

Checking $(1)$ in the definition amounts to the observation, that there exists a conditional expectation $M_{k+l} \to M_k \oplus M_l$, so if two c.p.~maps factor through $M_k$ and $M_l$ respectively, then their sum factors through $M_{k+l}$. Condition $(2)$ is obvious, so only $(3)$ remains to be checked. Let $a_1,\dots, a_n\in \mathfrak A$ and $b_1,\dots,b_n\in \mathfrak B$ be given, and $\phi^{(n)} \colon M_n(\mathfrak A) \to M_n(\mathfrak B)$ be the amplification of $\phi$. Let  $r \in M_{1,n}(\mathfrak A)$ be the row vector $r= (a_1 \; \dots \; a_n)$, and $c\in M_{n,1}(\mathfrak B)$ be the column vector $(b_1 \; \dots \; b_n)^t$. The map in equation \eqref{eq:basicmap} is exactly $c^\ast \phi^{(n)} (r^\ast (-) r) c$, which is factorable since $\phi^{(n)}$ is (clearly) factorable. Hence  $CP_\mathrm{fact}(\mathfrak A, \mathfrak B)$ is an operator convex cone.
 
 A c.p.~map is called \emph{nuclear} if it can be approximated point-norm by factorable maps, i.e.~if it is in the point-norm closure of $CP_\mathrm{fact}(\mathfrak A, \mathfrak B)$. 
 Thus the set $CP_\nuc(\mathfrak A,\mathfrak B)$ of nuclear c.p.~maps is a closed operator convex cone.
\end{example}

The above definition of nuclearity agrees with the one often used in the literature (for contractive maps), e.g.~the definition used in the book by Brown and Ozawa \cite[Definition 2.1.1]{BrownOzawa-book-approx}, in which the maps going in and out of the matrix algebras are assumed to be contractive.
This has been well-known for a long time, and a proof of this is presented in \cite[Lemma 2.3]{GabeRuiz-absrep} (alternatively, see \cite[Lemma 3.7]{Gabe-Oinfty}).

\begin{observation}
 Note that by our definition of a nuclear map, it follows immediately that the composition of any c.p.~map with a nuclear map, is again nuclear. We will use this fact several times without mentioning.
\end{observation}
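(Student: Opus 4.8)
The plan is to reduce everything to the factorable case and then pass to point-norm limits, using only two elementary inputs: that the composition of completely positive maps is again completely positive, and that every c.p.\ map is bounded.

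The heart of the matter is the claim that composing a \emph{factorable} map with any c.p.\ map, on either side, produces a factorable map. To see this, I would write a factorable $\phi \in CP(\mathfrak A,\mathfrak B)$ as $\phi = \beta \circ \alpha$ with $\alpha \colon \mathfrak A \to M_k$ and $\beta \colon M_k \to \mathfrak B$ completely positive. If $\psi \colon \mathfrak C \to \mathfrak A$ is c.p., then $\phi \circ \psi = \beta \circ (\alpha \circ \psi)$ still factors through $M_k$, since $\alpha \circ \psi$ is c.p.; symmetrically, if $\psi \colon \mathfrak B \to \mathfrak C$ is c.p., then $\psi \circ \phi = (\psi \circ \beta) \circ \alpha$ factors through the same $M_k$. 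So in both cases the composite is factorable.

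For the general nuclear case, let $\phi \in CP_\nuc(\mathfrak A,\mathfrak B)$ and choose factorable maps $\phi_\lambda \to \phi$ in the point-norm topology. For pre-composition with a c.p.\ map $\psi \colon \mathfrak C \to \mathfrak A$ I would estimate $\|(\phi \circ \psi - \phi_\lambda \circ \psi)(c)\| = \|(\phi - \phi_\lambda)(\psi(c))\| \to 0$ for each $c$, so $\phi_\lambda \circ \psi \to \phi \circ \psi$ point-norm and $\phi \circ \psi$ lies in the point-norm closure of $CP_\mathrm{fact}(\mathfrak C, \mathfrak B)$. For post-composition with a c.p.\ map $\psi \colon \mathfrak B \to \mathfrak C$ I would use boundedness of $\psi$ to get $\|(\psi \circ \phi - \psi \circ \phi_\lambda)(a)\| \le \|\psi\|\,\|(\phi - \phi_\lambda)(a)\| \to 0$, so again $\psi \circ \phi_\lambda \to \psi \circ \phi$ point-norm. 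Combined with the previous paragraph, each limiting net consists of factorable maps, whence the composite is nuclear.

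I do not expect any genuine obstacle, as the observation is essentially a formal consequence of the definitions; the only point deserving attention is the post-composition limit, where one must invoke the automatic boundedness of $\psi$ to pull point-norm convergence through the composition, whereas the pre-composition case requires no such estimate.
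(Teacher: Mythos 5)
Your proof is correct and is exactly the argument the paper leaves implicit when it calls the observation immediate: compositions with factorable maps factor through the same matrix algebra, and both the pre- and post-composition cases pass to point-norm limits (the latter using boundedness of c.p.\ maps). Nothing further is needed.
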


The following is a well-known, very basic result on c.p.~maps using the Hahn--Banach separation theorem. 

\begin{lemma}\label{l:cpHahnBanach}
 Let $\mathscr C \subseteq CP(\mathfrak A, \mathfrak B)$ be a point-norm closed convex subset. If $\phi \in CP(\mathfrak A, \mathfrak B)$ is in the point-weak closure $\overline{\mathscr C}^{pt\mathrm{-}weak} \subseteq CP(\mathfrak A, \mathfrak B^{\ast \ast})$, then
 $\phi \in \mathscr C$, i.e.~if for every $a_1, \dots , a_n\in \mathfrak A$, every $f_1, \dots, f_n \in \mathfrak B^\ast$ (or in the state space $S(\mathfrak B)$) and every $\epsilon >0$ there is a $\psi \in \mathscr C$, such that
 \[
  | f_i(\phi(a_i)) - f_i(\psi(a_i))| < \epsilon, \text{ for }i = 1,\dots ,n,
 \]
 then $\phi \in \mathscr C$.
\end{lemma}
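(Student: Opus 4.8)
The plan is to realise both $\mathscr C$ and $\phi$ as living in the single vector space $B(\mathfrak A, \mathfrak B)$ of bounded linear maps $\mathfrak A \to \mathfrak B$, equipped with two locally convex topologies: the \emph{point-norm} topology, generated by the seminorms $\psi \mapsto \|\psi(a)\|$ for $a \in \mathfrak A$, and the \emph{point-weak} topology, generated by the seminorms $\psi \mapsto |f(\psi(a))|$ for $a \in \mathfrak A$ and $f \in \mathfrak B^\ast$. The latter is pointwise convergence for the weak topology $\sigma(\mathfrak B, \mathfrak B^\ast)$, which is the restriction to $\mathfrak B$ of the point-weak topology on $CP(\mathfrak A, \mathfrak B^{\ast\ast})$ appearing in the statement; since $\phi$ is assumed to take values in $\mathfrak B$, the hypothesis says precisely that $\phi$ lies in the point-weak closure of $\mathscr C$ inside $B(\mathfrak A, \mathfrak B)$. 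As $CP(\mathfrak A, \mathfrak B)$ is point-norm closed in $B(\mathfrak A, \mathfrak B)$ (positivity survives point-norm limits) and $\mathscr C$ is by hypothesis a point-norm closed convex subset of $CP(\mathfrak A, \mathfrak B)$, the set $\mathscr C$ is a point-norm closed convex subset of $B(\mathfrak A, \mathfrak B)$.

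The key observation is that these two topologies have the \emph{same} continuous dual, namely the functionals of the form $\psi \mapsto \sum_{i=1}^n f_i(\psi(a_i))$ with $a_i \in \mathfrak A$ and $f_i \in \mathfrak B^\ast$. For the point-weak topology this is immediate, since the continuous dual of a space whose topology is generated by a separating family of linear functionals is exactly their linear span. Granting the matching of duals, Mazur's theorem (a convex set has the same closure in any two locally convex topologies sharing a continuous dual, via Hahn--Banach separation) shows that $\mathscr C$, being point-norm closed and convex, is also point-weak closed, and therefore contains its point-weak limit $\phi$. Equivalently, and more concretely, if $\phi \notin \mathscr C$ one separates the point $\phi$ from the point-norm closed convex set $\mathscr C$ by a point-norm continuous functional $\Lambda$ and a constant, and the explicit $\epsilon$-approximation hypothesis then produces $\psi \in \mathscr C$ with $\Lambda(\psi)$ arbitrarily close to $\Lambda(\phi)$, contradicting the separation. (Over $\mathbb C$ one applies the separation theorem to real parts in the usual way.)

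The one step needing genuine argument, which I expect to be the main obstacle, is identifying the point-norm continuous dual with the functionals above, since a priori it could be larger. Here I would argue as follows. A point-norm continuous functional $\Lambda$ is dominated by a finite subfamily of the generating seminorms, so $|\Lambda(\psi)| \le C \max_{1\le i\le n}\|\psi(a_i)\|$ for some $a_1,\dots,a_n \in \mathfrak A$ and $C>0$. Consequently $\Lambda$ vanishes on the kernel of the linear map $R\colon B(\mathfrak A, \mathfrak B)\to \mathfrak B^{\oplus n}$ given by $R(\psi)=(\psi(a_1),\dots,\psi(a_n))$, and hence factors as $\Lambda = \widetilde\Lambda \circ R$ for a functional $\widetilde\Lambda$ on $R(B(\mathfrak A, \mathfrak B)) \subseteq \mathfrak B^{\oplus n}$ that is bounded for the norm $\|(b_i)_i\| = \max_i \|b_i\|$. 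Extending $\widetilde\Lambda$ by Hahn--Banach to all of $\mathfrak B^{\oplus n}$ and using that the dual of $\mathfrak B^{\oplus n}$ in this norm is $(\mathfrak B^\ast)^{\oplus n}$, I obtain $f_1,\dots,f_n \in \mathfrak B^\ast$ with $\Lambda(\psi)=\sum_{i=1}^n f_i(\psi(a_i))$, which is of the required form. Since conversely each such functional is plainly point-norm continuous, the two duals coincide and the separation argument above goes through, giving $\phi \in \mathscr C$.
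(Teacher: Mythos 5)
Your proposal is correct and is essentially the paper's argument: both rest on Hahn--Banach separation (Mazur's theorem) for the convex set $\mathscr C$, exploiting that point-norm and point-weak continuous functionals coincide, and then invoke point-norm closedness of $\mathscr C$ to conclude. The paper implements this by fixing a finite tuple $a_1,\dots,a_n$ and separating in the Banach space $\mathfrak B^n$, which is exactly the reduction your factoring of a point-norm continuous functional through $\mathfrak B^{\oplus n}$ carries out in more abstract form.
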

\begin{proof}
 This is an easy Hahn--Banach separation argument. In fact, let $a_1, \dots, a_n \in \mathfrak A$. The set
 \[
  \{ (\psi(a_1), \dots, \psi(a_n)) : \psi \in \mathscr C\}
 \]
 is a norm-closed convex subset of $\mathfrak B^n$. Hence by the Hahn--Banach separation theorem (since we can not separate $(\phi(a_1), \dots, \phi(a_n))$ from the above set by linear functionals) we must have $(\phi(a_1), \dots , \phi(a_n))$ is in the above set.
 Now the result follows trivially since $\mathscr C$ is point-norm closed.
\end{proof}

Kirchberg and Rørdam show in \cite[Proposition 4.2]{KirchbergRordam-zero}, that if $\mathscr C \subseteq CP(\mathfrak A,\mathfrak B)$ is a closed operator convex cone, where $\mathfrak A$ is separable and nuclear, and $\phi \colon \mathfrak A\to \mathfrak B$ is any c.p.~map, then $\phi \in \mathscr C$ if and only if $\phi(a) \in \overline{\mathfrak B \{\psi(a):\psi \in \mathscr C\} \mathfrak B}$ for every $a\in \mathfrak A$. We refer to this as a Hahn--Banach separation theorem for closed operator convex cones, as one obtains a separation of $\phi$ from a closed operator convex cone, and since the result relies heavily on the Hahn--Banach separation theorem.

We generalise the result of Kirchberg and Rørdam to exact $C^\ast$-algebras and nuclear c.p.~maps, and where we only take positive elements in $\mathfrak A$. The proof is virtually identical to the proof of \cite[Proposition 4.2]{KirchbergRordam-zero}, but we fill in the proof for completion.

\begin{theorem}[Cf.~Kirchberg--Rørdam]\label{t:uniquecone}
 Let $\mathfrak A$ and $\mathfrak B$ be $C^\ast$-algebras with $\mathfrak A$ exact, and let $\mathscr C \subseteq CP(\mathfrak A, \mathfrak B)$ be a closed operator convex cone. 
 Suppose that $\mathscr C \subseteq CP_\nuc(\mathfrak A, \mathfrak B)$ and let $\phi \in CP_\nuc(\mathfrak A, \mathfrak B)$. 
 Then $\phi \in \mathscr C$ if and only if $\phi(a) \in \overline{\mathfrak B \{ \psi (a) : \psi \in \mathscr C\} \mathfrak B}$ for every positive $a\in \mathfrak A$.
\end{theorem}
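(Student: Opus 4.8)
The plan is to treat the two implications separately. The forward implication is a one-line approximate-unit argument using neither exactness nor nuclearity: if $\phi\in\mathscr C$ and $a\in\mathfrak A$ is positive, then $\phi(a)$ already lies in $\{\psi(a):\psi\in\mathscr C\}$, and for an approximate unit $(u_\lambda)$ of $\mathfrak B$ the elements $u_\lambda\phi(a)u_\lambda\in\mathfrak B\{\psi(a):\psi\in\mathscr C\}\mathfrak B$ converge in norm to $\phi(a)$, so $\phi(a)\in\overline{\mathfrak B\{\psi(a):\psi\in\mathscr C\}\mathfrak B}$. All of the work lies in the converse.

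For the converse I would argue by contradiction via Lemma~\ref{l:cpHahnBanach}. Since $\mathscr C$ is a point-norm closed convex cone, it suffices to show that $\phi$ lies in the point-weak closure of $\mathscr C$; if not, a Hahn--Banach separation in $\mathfrak B^n$ (applied to self-adjoint test elements, which is enough as all maps involved preserve adjoints) produces self-adjoint $a_1,\dots,a_n\in\mathfrak A$ and self-adjoint functionals $f_1,\dots,f_n\in\mathfrak B^\ast$ with
\[
\sum_{i=1}^n f_i(\psi(a_i))\le 0 \quad\text{for all }\psi\in\mathscr C, \qquad \sum_{i=1}^n f_i(\phi(a_i))>0,
\]
the left supremum being $0$ because $\mathscr C$ is a cone. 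The aim is to contradict this using the hypothesis $\phi(a)\in\overline{\mathfrak B\{\psi(a):\psi\in\mathscr C\}\mathfrak B}$ for positive $a$.

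The engine I would rely on is an elementary GNS computation: if $g$ is a \emph{positive} functional on $\mathfrak B$ with $g(\psi(a))=0$ for every $\psi$ in a cone closed under $b\mapsto c^\ast b c$, then $g(c^\ast\psi(a)c)=\|\pi_g(\psi(a)^{1/2})\pi_g(c)\xi_g\|^2=0$ for all $c\in\mathfrak B$, which forces $\pi_g(\psi(a))=0$ and hence makes $g$ vanish on the whole ideal $\overline{\mathfrak B\{\psi(a):\psi\in\mathscr C\}\mathfrak B}$. In the most favourable situation, namely a single positive test element $a$ and a single positive functional $g$, this closes the argument at once: positivity forces $g(\psi(a))\ge 0$, so $g(\psi(a))=0$ for all $\psi$, whence $g(\phi(a))=0$, contradicting $g(\phi(a))>0$.

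The main obstacle is precisely the reduction to (something close to) this favourable situation, because the separating data are genuinely signed and multi-variable. I would realise the $f_i$ as differences of vector functionals $\omega_{\xi_i}-\omega_{\eta_i}$ in the GNS space of a common dominating state, split each $a_i$ into positive and negative parts, and amplify to $M_N(\mathfrak B)$ so that $\sum_i f_i(\psi(a_i))$ becomes $\omega_\Xi(\psi_N(D))$ for a single vector $\Xi$ and a self-adjoint $D\in M_N(\mathfrak A)$; the operator convex cone axiom~$(3)$ is exactly what guarantees that the amplified maps still form a closed operator convex cone and that the hypothesis transfers to $M_N$. Nuclearity of $\phi$, used to factor $\phi$ through a matrix algebra where these manipulations become finite-dimensional, and exactness of $\mathfrak A$, used to supply such completely positive factorisations and to keep the hypothesis intact under amplification, enter precisely at this point. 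The delicate step I expect to be hardest is controlling the residual sign coming from $D=D_+-D_-$ so that the positive-functional engine can still be applied; this is the heart of the matter and follows the structure of the argument of Kirchberg and Rørdam in \cite[Proposition 4.2]{KirchbergRordam-zero}.
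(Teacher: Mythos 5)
Your outline of the easy direction and the general shape of the hard direction (Hahn--Banach separation plus a ``positive-functional engine'') is reasonable, but the proof has a genuine gap exactly where you admit it does: the reduction of the signed, multi-variable separation data to a situation where the engine applies is never carried out, and the route you propose for it would not work as described. After amplification your test element $D=\mathrm{diag}(a_1,\dots,a_n,-a_1,\dots,-a_n)$ is merely self-adjoint, while the engine needs a \emph{positive} test element paired against a \emph{positive} functional: the hypothesis of the theorem only constrains the ideal generated by $\psi(a)$ for $a\geq 0$, so after writing $D=D_+-D_-$ you are left with a difference $\omega_\Xi(\psi_N(D_+))-\omega_\Xi(\psi_N(D_-))$ whose two terms you cannot control separately, and nothing rules out cancellation between them. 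This is not a technical loose end; it is the entire content of the theorem, and deferring it to ``follows the structure of Kirchberg--R{\o}rdam'' is not a proof --- particularly since Kirchberg and R{\o}rdam do \emph{not} resolve the sign problem by matrix amplification and Jordan decomposition.

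The missing idea is the commutant trick of \cite[Lemma 7.17]{KirchbergRordam-absorbingOinfty}, which is how the paper (following \cite[Proposition 4.2]{KirchbergRordam-zero}) dissolves the sign problem before it arises. One realizes the functionals as $f_i(b)=\langle \pi(b)c_i\xi,\xi\rangle$ for a cyclic representation $\pi$ of $\mathfrak B$ with cyclic vector $\xi$ and elements $c_i$ in the commutant $\pi(\mathfrak B)'$, sets $\mathfrak C=C^\ast(c_1,\dots,c_n)$, and attaches to every nuclear c.p.~map $\rho\colon\mathfrak A\to\mathfrak B$ the functional $\eta_\rho$ on the \emph{spatial} tensor product $\mathfrak A\otimes\mathfrak C$ determined by $\eta_\rho(a\otimes c)=\langle\pi(\rho(a))c\xi,\xi\rangle$. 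Positivity of $\eta_\rho$ comes for free from complete positivity together with the commutation, so the signs of the $f_i$ are absorbed into the $\mathfrak C$-coordinate and no splitting into positive and negative parts is ever needed. One then shows that the weak-$\ast$ closure $\mathscr K$ of $\{\eta_\psi:\psi\in\mathscr C\}$ is a cone closed under $d^\ast(-)d$, that $\eta_\phi\in\mathscr K$ follows once $\eta_\phi$ vanishes on the ideal $\mathfrak J=\{d:\eta(d^\ast d)=0 \text{ for all } \eta\in\mathscr K\}$ (this is \cite[Lemma 7.17 (ii)]{KirchbergRordam-absorbingOinfty}), and this vanishing is checked on elementary tensors $x\otimes y\in\mathfrak J$ using the hypothesis at the positive element $x^\ast x$. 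This also corrects where your two standing hypotheses actually enter: nuclearity of $\phi$ and of the maps in $\mathscr C$ is used solely to make $\rho\otimes id_{\mathfrak C}$ factor through $\mathfrak A\otimes\mathfrak C$ (a nuclear map does \emph{not} factor through a matrix algebra; it is only a point-norm limit of such maps), and exactness of $\mathfrak A$ is used solely to know that the ideal $\mathfrak J\subseteq\mathfrak A\otimes\mathfrak C$ is the closed linear span of the elementary tensors it contains --- not to ``keep the hypothesis intact under amplification.''
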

\begin{proof}
``Only if'' is obvious. For ``if'', suppose $\phi(a) \in \overline{\mathfrak B \{ \psi (a) : \psi \in \mathscr C\} \mathfrak B}$ for every positive $a\in \mathfrak A$.
 By Lemma \ref{l:cpHahnBanach} it suffices to show, that given $a_1,\dots,a_n\in \mathfrak A$, $\epsilon >0$ and $f_1,\dots,f_n\in \mathfrak B^\ast$, there is a $\psi \in \mathscr C$ such that
 \[
  | f_i(\phi(a_i)) - f_i(\psi(a_i))| < \epsilon, \quad \text{for } i=1,\dots, n.
 \]
 By \cite[Lemma 7.17 (i)]{KirchbergRordam-absorbingOinfty} we may find a cyclic representation $\pi \colon \mathfrak B \to \mathbb B(\mathcal H)$ with cyclic vector $\xi \in \mathcal H$, and elements $c_1, \dots, c_n \in \pi(\mathfrak B)' \cap \mathbb B(\mathcal H)$,
 such that $f_i(b) = \langle \pi(b) c_i \xi, \xi\rangle$ for $i=1,\dots, n$. Let $\mathfrak C = C^\ast(c_1,\dots,c_n)$ and $\iota \colon \mathfrak C \hookrightarrow \mathbb B(\mathcal H)$ be the inclusion.
 For any c.p.~map $\rho \colon \mathfrak A \to \mathfrak B$ there is an induced positive linear functional on $\mathfrak A \otimes_{\max{}} \mathfrak C$ given by the composition
 \[
  \mathfrak A \otimes_{\max{}} \mathfrak C \xrightarrow{\rho \otimes id_\mathfrak{C}} \mathfrak B \otimes_{\max{}} \mathfrak C \xrightarrow{\pi \times \iota} \mathbb B(\mathcal H) \xrightarrow{ \omega_\xi} \mathbb C,
 \]
 where $\omega_\xi$ is the vector functional induced by $\xi$, i.e.~$\omega_\xi(T) = \langle T\xi , \xi\rangle$. 
 If $\rho$ is nuclear, then $\rho \otimes id_\mathfrak{C}$ above factors through the spatial tensor product $\mathfrak A \otimes \mathfrak C$ (see e.g.~\cite[Lemma 3.6.10]{BrownOzawa-book-approx}),
 so if $\rho$ is nuclear it induces a positive linear functional $\eta_\rho$ on $\mathfrak A \otimes \mathfrak C$. 
 
 Let $\mathscr K$ be the weak-$\ast$ closure of $\{ \eta_{\psi} : \psi \in \mathscr C\} \subseteq (\mathfrak A \otimes \mathfrak C)^\ast$. 
 It suffices to show that $\eta_\phi \in \mathscr K$ since, if $|\eta_\phi(a_i \otimes c_i) -  \eta_\psi(a_i \otimes c_i)| < \epsilon$ for some $\psi \in \mathscr C$, then
 \[
  f_i(\phi(a_i)) = \langle \pi(\phi(a_i)) c_i \xi , \xi \rangle = \eta_\phi(a_i \otimes c_i) \approx_\epsilon \eta_\psi(a_i \otimes c_i) = \langle \pi(\psi(a_i)) c_i \xi , \xi \rangle = f_i(\psi(a_i)),
 \]
 for $i=1,\dots, n$, which is what we want to prove. It is easily verified (e.g.~by checking on elementary tensors $a\otimes c$) that $\eta_{\psi_1} + \eta_{\psi_2} = \eta_{\psi_1 + \psi_2}$, 
 and that $t \eta_\psi = \eta_{t\psi}$ for $t \in \mathbb R_+$. Hence $\mathscr K$ is a weak-$\ast$ closed convex cone of positive linear functionals.
 
 We want to show that if $\eta \in \mathscr K$ and $d\in \mathfrak A \otimes \mathfrak C$, then $d^\ast \eta d := \eta(d^\ast(-)d) \in \mathscr K$.
 Since $\mathscr K$ is weak-$\ast$ closed, it suffices to show this for $\eta = \eta_\psi$ where $\psi \in \mathscr C$, and $d= \sum_{j=1}^k x_j \otimes y_j$ 
 where $x_1,\dots,x_k\in \mathfrak A$ and $y_1 , \dots,y_k\in \mathfrak C$. For $a\in \mathfrak A$ and $c \in \mathfrak C$ we have
 \[
  \eta_{\psi}(d^\ast(a\otimes c)d) = \sum_{j,l=1}^k \eta_{\psi}((x_j^\ast a x_l) \otimes (y_j^\ast c y_l)) = \sum_{j,l=1}^k \langle \pi(\psi(x_j^\ast a x_l)) c y_l \xi, y_j \xi\rangle.
 \]
 Since $\xi$ is cyclic for $\pi$ we may, for any $\delta >0$, find $b_1,\dots,b_k\in \mathfrak B$ such that $\| \pi(b_j)\xi - y_j \xi \| < \delta$. Thus by choosing $\delta$ sufficiently small we may approximate $d^\ast \eta_\psi d$ in the weak-$\ast$ topology by
 \[
  \sum_{j,l=1}^k \langle \pi(\psi(x_j^\ast a x_l)) c \pi(b_l) \xi, \pi(b_j) \xi\rangle = \langle \pi(\sum_{j,l=1}^k b_j^\ast\psi(x_j^\ast a x_l) b_l) c \xi , \xi\rangle = \eta_{\psi_0}(a\otimes c)
 \]
 where $\psi_0 = \sum_{j,l=1}^k b_j^\ast \psi(x_j^\ast (-)x_l)b_l \in \mathscr C$. Thus $d^\ast \eta d \in \mathscr K$ for any $\eta \in \mathscr K$ and $d\in \mathfrak A \otimes \mathfrak C$. 
 Let $\mathfrak J$ be the subset of $\mathfrak A \otimes \mathfrak C$ consisting of elements $d$ such that $\eta(d^\ast d) = 0$ for all $\eta \in \mathscr K$. 
 By \cite[Lemma 7.17 (ii)]{KirchbergRordam-absorbingOinfty} it follows that $\mathfrak J$ is a closed two-sided ideal in $\mathfrak A \otimes \mathfrak C$, and that $\eta_\phi \in \mathscr K$ if $\eta_\phi(d^\ast d) = 0$ for all $d\in \mathfrak J$.
 
 Since $\mathfrak A$ is exact, $\mathfrak J$ is the closed linear span of all elementary tensors $x\otimes y$ for which $x\otimes y \in \mathfrak J$ (see e.g.~\cite[Corollary 9.4.6]{BrownOzawa-book-approx}).
 Recall that the left kernel of $\eta_\phi$, i.e.~the set of all $d$ such that $\eta_\phi(d^\ast d) = 0$, is a closed linear subspace of $\mathfrak A \otimes \mathfrak C$. Hence it suffices to show, that when $x\in \mathfrak A$ and $y \in \mathfrak C$ are
 such that $x\otimes y\in \mathfrak J$, then $\eta_\phi(x^\ast x \otimes y^\ast y ) = 0$. Fix such $x$ and $y$.
 
 By assumption $\phi(x^\ast x) \in \overline{\mathfrak B\{ \psi (x^\ast x) : \psi \in \mathscr C\}\mathfrak B}$. Thus for any $\delta>0$ we may choose $\psi_1,\dots , \psi_m \in \mathscr C$ and $b_1,\dots,b_m \in \mathfrak B$ such that
 \[
  \| \phi(x^\ast x) - (\sum_{j=1}^m b_j^\ast \psi_j(x^\ast x) b_j)\| < \delta.
 \]
 Let $\psi = \sum_{j=1}^m b_j^\ast \psi_j(-) b_j$ which is in $\mathscr C$, so that $\| \phi(x^\ast x) - \psi(x^\ast x)\| < \delta$. Since $\eta_\psi(x^\ast x \otimes y^\ast y) = 0$ we get that
 \begin{eqnarray*}
  |\eta_\phi(x^\ast x \otimes y^\ast y) | &=& | \eta_\phi(x^\ast x \otimes y^\ast y) - \eta_\psi(x^\ast x \otimes y^\ast y)| \\
  &=& | \langle \pi(\phi(x^\ast x)-\psi(x^\ast x)) y\xi, y \xi\rangle | \\
  &<& \delta \| y\xi\|^2.
 \end{eqnarray*}
 Since $\delta$ was arbitrary we get that $\eta_\phi(x^\ast x \otimes y^\ast y) = 0$ which finishes the proof.
\end{proof}


\subsection{An abstract lifting result}

The main goal of this paper, is to obtain lifting results for c.p.~maps, where we remain in control of the lift, in the sense that we may choose a lift in a given closed operator convex cone. This can be obtained as an application of the Hahn--Banach type theorem. First we need a lemma, which is essentially due to Arveson.

\begin{lemma}\label{l:arveson}
Let $\mathfrak A, \mathfrak B$ and $\mathfrak C$ be $C^\ast$-algebras with $\mathfrak A$ separable, and let $\pi \colon \mathfrak B \to \mathfrak C$ be a surjective $\ast$-homomorphism.
Let $\mathscr C \subseteq CP(\mathfrak A,\mathfrak B)$ be a closed operator convex cone. Then
\[
 \pi (\mathscr C) := \{ \pi \circ \psi \mid \psi \in \mathscr C\}
\]
is a closed operator convex cone.
\end{lemma}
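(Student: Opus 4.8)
The plan is to verify the three defining conditions of an operator convex cone for $\pi(\mathscr C)$ and then, as the substantial part, its point-norm closedness; separability of $\mathfrak A$ is needed only for the last point. Conditions $(1)$--$(3)$ are essentially formal consequences of the surjectivity of $\pi$. For condition $(3)$, given $c_1,\dots,c_n\in\mathfrak C$ I would lift them to $b_1,\dots,b_n\in\mathfrak B$ with $\pi(b_i)=c_i$, and then for $\psi\in\mathscr C$ observe that $\sum_{i,j}c_i^\ast(\pi\circ\psi)(a_i^\ast(-)a_j)c_j=\pi\circ\big(\sum_{i,j}b_i^\ast\psi(a_i^\ast(-)a_j)b_j\big)$, whose inner map lies in $\mathscr C$ by condition $(3)$ for $\mathscr C$; conditions $(1)$ and $(2)$ are handled in the same way. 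Thus $\pi(\mathscr C)$ is an operator convex cone, and everything reduces to closedness.

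For closedness I would adapt Arveson's lifting construction. Suppose $\phi$ lies in the point-norm closure of $\pi(\mathscr C)$. Using separability of $\mathfrak A$ to fix a dense sequence $(a_i)$, a diagonal argument produces maps $\psi_n\in\mathscr C$ with $\pi\circ\psi_n\to\phi$ point-norm, and after passing to a subsequence I may assume the convergence is rapid on the finite sets $\{a_1,\dots,a_n\}$. Writing $\mathfrak J=\ker\pi$ and choosing a quasicentral approximate unit $(u_n)$ for $\mathfrak J$, I would define c.p.~maps by $M_1=\psi_1$ and
\[
M_{n+1}=(1-u_n)^{1/2}\,\psi_{n+1}(-)\,(1-u_n)^{1/2}+u_n^{1/2}\,M_n(-)\,u_n^{1/2},
\]
choosing $u_n$ far enough out in the approximate unit at each stage. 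Each $M_n$ lies in $\mathscr C$: this follows by induction, since both summands are of the form $b^\ast(-)b$ applied to a map in $\mathscr C$ and hence in $\mathscr C$ by condition $(2)$ (first extending condition $(2)$ from $b\in\mathfrak B$ to $b\in M(\mathfrak B)$, e.g.~to $b=(1-u_n)^{1/2}$, by strict approximation together with point-norm closedness), while their sum is in $\mathscr C$ by condition $(1)$. Moreover $\pi\circ M_{n+1}=\pi\circ\psi_{n+1}$, because $\pi(u_n)=0$ forces $\pi((1-u_n)^{1/2})=1$ and $\pi(u_n^{1/2})=0$.

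The heart of the argument is that $(M_n)$ is point-norm Cauchy. Here quasicentrality enters: approximating $u_n^{1/2}M_n(a)u_n^{1/2}\approx u_n M_n(a)$ and $(1-u_n)^{1/2}\psi_{n+1}(a)(1-u_n)^{1/2}\approx(1-u_n)\psi_{n+1}(a)$ up to commutators that $u_n$ can be chosen to make arbitrarily small, one finds $M_{n+1}(a)-M_n(a)\approx(1-u_n)\big(\psi_{n+1}(a)-M_n(a)\big)$. Since $\pi(\psi_{n+1}(a)-M_n(a))=\pi(\psi_{n+1}(a))-\pi(\psi_n(a))$ is small by rapid convergence, the element $\psi_{n+1}(a)-M_n(a)$ is close to $\mathfrak J$, and because $u_n$ is an approximate unit for $\mathfrak J$ the factor $(1-u_n)$ annihilates it up to a small error. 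Choosing $u_n$ suitably on $\{a_1,\dots,a_n\}$ makes $\|M_{n+1}(a_i)-M_n(a_i)\|$ summable, so $(M_n)$ converges point-norm on the dense sequence and, with the standard uniform-bound bookkeeping (testing against an approximate unit of $\mathfrak A$ included among the $a_i$), everywhere, to a c.p.~map $\psi$. As $\mathscr C$ is point-norm closed, $\psi\in\mathscr C$, while $\pi\circ\psi=\lim_n\pi\circ M_n=\lim_n\pi\circ\psi_n=\phi$ by norm-continuity of $\pi$; hence $\phi\in\pi(\mathscr C)$.

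The main obstacle is precisely this last step: arranging, stage by stage, a single choice of $u_n$ that simultaneously controls the commutator errors and, via the approximate-unit property together with rapid convergence of $\pi\circ\psi_n$, forces the telescoping differences to be summable, all while keeping the $M_n$ uniformly controlled in norm so that the limit extends from the dense sequence to a genuine c.p.~map with $\pi\circ\psi=\phi$ \emph{exactly} rather than merely approximately.
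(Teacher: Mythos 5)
Your overall route is the same as the paper's: the paper verifies the cone axioms formally and then invokes Arveson's proof that the liftable c.p.~maps form a point-norm closed set, which is precisely the quasicentral gluing you describe. Your cone-specific additions are also the right ones --- extending condition $(2)$ to multipliers $m\in\multialg{\mathfrak B}$ via $m^\ast\phi(-)m=\lim_\lambda (me_\lambda)^\ast\phi(-)(me_\lambda)$ and point-norm closedness, so that each glued map $M_n$ stays in $\mathscr C$, and the observation that $\pi\circ M_{n+1}=\pi\circ\psi_{n+1}$ holds exactly, so the limit is an exact lift.

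There is, however, one genuine gap: the uniform norm control, which you name as ``the main obstacle'' but do not actually supply. Arveson's argument is run for \emph{contractive} maps, where $\|M_n\|\le 1$ is automatic; here $\mathscr C$ carries no norm bound, and testing against an approximate unit of $\mathfrak A$ included among the $a_i$ does not produce one. Concretely, if $\mathscr C$ contains c.p.~maps into $\mathfrak J=\ker\pi$ of large norm (e.g.\ $\mathscr C=CP(\mathfrak A,\mathfrak B)$ with $\mathfrak J\neq 0$), nothing in your selection rule prevents $\psi_n=\psi_n^0+\rho_n$ with $\rho_n$ a c.p.~map into $\mathfrak J$ of norm $n$, since $\rho_n$ is invisible to $\pi\circ\psi_n$; your choice of $u_n$ suppresses $(1-u_n)^{1/2}\rho_{n+1}(a_i)(1-u_n)^{1/2}$ only on the finitely many test elements of stage $n$, while the junk that $\psi_n$ carries on elements entering at stage $n$ is frozen into all later $M_m$ (conjugation by $u_m^{1/2}$ essentially preserves elements of $\mathfrak J$), so the pointwise limit can be unbounded on the unit ball of the dense sequence and then fails to extend to a c.p.~map; the same issue already obstructs upgrading membership in the point-norm closure to a sequence with $\pi\circ\psi_n\to\phi$ on \emph{all} of $\mathfrak A$. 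The repair uses the cone structure itself: before gluing, replace each approximant $\psi$ by $\psi':=(1-u)^{1/2}\,\psi(e(-)e)\,(1-u)^{1/2}$, where $e$ is far out in an approximate unit of $\mathfrak A$ (so that $\|ea_ie-a_i\|$ is small on the test set) and then $u$ is far out in an approximate unit of $\mathfrak J$. This $\psi'$ lies in $\mathscr C$ by conditions $(2)$ and $(3)$ plus closedness (in their multiplier form), one has $\pi\circ\psi'=\pi\circ\psi(e(-)e)$, still close to $\phi$ on the test set, and for $0\le a\le 1$ the estimate $\psi(eae)\le\psi(e^2)$ gives $\|\psi'\|\le\|(1-u)^{1/2}\psi(e^2)(1-u)^{1/2}\|$, which for $u$ far out is close to $\|\pi(\psi(e^2))\|\le\|\phi\|+1$, provided $e^2$ is included among the elements on which $\pi\circ\psi$ approximates $\phi$. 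With $\sup_n\|\psi_n\|\le\|\phi\|+1$ arranged, the bound $M_{n+1}(a)\le\|\psi_{n+1}\|(1-u_n)+\|M_n\|u_n\le\max\{\|\psi_{n+1}\|,\|M_n\|\}\cdot 1_{\multialg{\mathfrak B}}$ for $0\le a\le 1$ yields $\|M_n\|\le\|\phi\|+1$ for all $n$, and the rest of your argument then goes through as written.
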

\begin{proof}
 Clearly $\pi(\mathscr C)$ is an operator convex cone. That $\pi(\mathscr C)$ is point-norm closed is essentially the same proof as \cite[Theorem 6]{Arveson-extensions} (that the set of c.p.~maps with contractive c.p.~lifts is point-norm closed). 
However, to run Arveson's argument we must show that if $\phi\colon \mathfrak A \to \mathfrak C$ is a contractive c.p.~map which is a point-norm limit of a net of (not necessarily contractive) maps $\pi \circ \psi_\lambda$ with $\psi_\lambda\in \mathscr C$, then there is a sequence of contractive maps $\tilde \psi_n \in \mathscr C$ such that $\pi \circ \tilde \psi_n \to \phi$ point-norm.
Let $(a_n)_{n\in \mathbb N}$ and $(e_n)_{n\in \mathbb N}$ be a dense sequence and an approximate identity respectively in $\mathfrak A$. For each $n\in \mathbb N$ we fix $\lambda_n$ such that $\| \pi(\psi_{\lambda_n}(e_n x e_n)) - \phi(e_n x e_n)\| < 1/n$ for $x\in \{ 1, a_1,\dots, a_n\}$. We may pick a positive contraction $f_n \in \ker \pi$ such that $\| (1-f_n) \psi_{\lambda_n}(e_n^2) (1-f_n)\| < \|\phi(e_n^2)\| + 1/n \leq \tfrac{n+1}{n}$. Let $\tilde\psi_n := \tfrac{n}{n+1} (1-f_n)\psi_{\lambda_n}(e_n(-) e_n) (1-f_n) \in \mathscr C$ which is contractive. It is easy to check that $\pi \circ \tilde \psi_n \to \phi$ point-norm. Now the exact same proof as \cite[Theorem 6]{Arveson-extensions} (alternatively, see \cite[Lemma C.2]{BrownOzawa-book-approx}) provides $\psi \in \mathscr C$ such that $\pi \circ \psi = \phi$.
\end{proof}

\begin{proposition}\label{p:liftingconeversion}
Let $\mathfrak A$ be a separable, exact $C^\ast$-algebra, let $\mathfrak B$ be a $C^\ast$-algebra with a two-sided, closed ideal $\mathfrak J$, and let $\pi \colon \mathfrak B \to \mathfrak B/\mathfrak J$ be the quotient map. Let $\mathscr C \subseteq CP_\nuc(\mathfrak A, \mathfrak B)$ be a closed operator convex cone. A c.p.~map $\phi \colon \mathfrak A \to \mathfrak B/\mathfrak J$ lifts to a c.p.~map in $\mathscr C$ if and only if $\phi$ is nuclear and $\phi(a) \in \pi(\overline{\mathfrak B \{ \psi(a): \psi \in \mathscr C\}\mathfrak B})$, for every positive $a\in \mathfrak A$.
\end{proposition}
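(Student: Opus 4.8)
The plan is to reduce both directions to results already established in this section, and in particular to apply the Hahn--Banach type Theorem \ref{t:uniquecone} not to $\mathscr C$ itself but to its pushforward $\pi(\mathscr C) \subseteq CP(\mathfrak A, \mathfrak B/\mathfrak J)$. The key observation is that ``$\phi$ lifts to a map in $\mathscr C$'' is literally the statement ``$\phi \in \pi(\mathscr C)$'', so once we know $\pi(\mathscr C)$ is a closed operator convex cone consisting of nuclear maps, Theorem \ref{t:uniquecone} becomes applicable and controls membership.

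For the ``only if'' direction, which I expect to be routine, suppose $\phi = \pi \circ \psi$ for some $\psi \in \mathscr C$. Then $\phi$ is nuclear, being the composition of the nuclear map $\psi$ with the $\ast$-homomorphism $\pi$. For positive $a \in \mathfrak A$, approximating $\psi(a)$ by $e_\lambda \psi(a) e_\lambda$ along an approximate unit $(e_\lambda)$ of $\mathfrak B$ shows $\psi(a) \in \overline{\mathfrak B \{ \psi(a) : \psi \in \mathscr C\} \mathfrak B}$, whence $\phi(a) = \pi(\psi(a)) \in \pi(\overline{\mathfrak B \{ \psi(a) : \psi \in \mathscr C\} \mathfrak B})$, as required.

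For the ``if'' direction I would first invoke Lemma \ref{l:arveson} (using that $\mathfrak A$ is separable) to conclude that $\pi(\mathscr C)$ is a closed operator convex cone; each of its elements is $\pi$ composed with a nuclear map, hence nuclear, so $\pi(\mathscr C) \subseteq CP_\nuc(\mathfrak A, \mathfrak B/\mathfrak J)$. Since $\mathfrak A$ is exact and $\phi$ is nuclear by hypothesis, Theorem \ref{t:uniquecone} applied to the pair $(\phi, \pi(\mathscr C))$ over the target algebra $\mathfrak B/\mathfrak J$ tells us that $\phi \in \pi(\mathscr C)$, i.e.\ $\phi$ lifts into $\mathscr C$, precisely when
\[
\phi(a) \in \overline{(\mathfrak B/\mathfrak J)\{ \pi(\psi(a)) : \psi \in \mathscr C\}(\mathfrak B/\mathfrak J)} \quad \text{for every positive } a \in \mathfrak A.
\]

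The only real content that remains, and the step where I would be most careful, is matching this criterion to the given hypothesis $\phi(a) \in \pi(\overline{\mathfrak B \{ \psi(a) : \psi \in \mathscr C\} \mathfrak B})$. Here I would use that $\pi$ is a continuous surjective $\ast$-homomorphism: continuity yields the inclusion $\pi(\overline S) \subseteq \overline{\pi(S)}$ for any subset $S$, and multiplicativity together with $\pi(\mathfrak B) = \mathfrak B/\mathfrak J$ yields $\pi(\mathfrak B\{\psi(a)\}\mathfrak B) = (\mathfrak B/\mathfrak J)\{\pi(\psi(a))\}(\mathfrak B/\mathfrak J)$. Combining these gives
\[
\pi\left(\overline{\mathfrak B \{ \psi(a) : \psi \in \mathscr C\} \mathfrak B}\right) \subseteq \overline{(\mathfrak B/\mathfrak J)\{ \pi(\psi(a)) : \psi \in \mathscr C\}(\mathfrak B/\mathfrak J)},
\]
so the hypothesis feeds exactly into the criterion of Theorem \ref{t:uniquecone}. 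It is worth noting that only the inclusion $\pi(\overline S) \subseteq \overline{\pi(S)}$ is available in general (not equality), but this is fortunately the direction we need, so the argument closes.
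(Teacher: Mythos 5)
Your proof is correct and follows essentially the same route as the paper's: both push the cone forward to $\pi(\mathscr C)$ via Lemma \ref{l:arveson}, note its elements are nuclear, apply Theorem \ref{t:uniquecone} over $\mathfrak B/\mathfrak J$, and identify $\pi\left(\overline{\mathfrak B\{\psi(a):\psi\in\mathscr C\}\mathfrak B}\right)$ with the fullness set of $\pi(\mathscr C)$. The only (harmless) difference is that the paper asserts equality of these two sets, which implicitly uses that images of closed ideals under $\ast$-homomorphisms are closed, whereas you correctly observe that the single inclusion furnished by continuity of $\pi$ already suffices for the direction where Theorem \ref{t:uniquecone} is invoked, the converse direction being handled directly.
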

\begin{proof}
If $\phi$ lifts to a map $\psi\in \mathscr C$, then $\phi = \pi \circ \psi$ is nuclear (as $\psi$ is nuclear), and $\phi(a) = \pi(\psi(a)) \in \pi(\overline{\mathfrak B \{ \psi'(a) : \psi'\in \mathscr C\} \mathfrak B})$ for every positive $a\in \mathfrak A$.

Suppose that $\phi$ is nuclear, and that $\phi(a) \in \pi(\overline{\mathfrak B \{ \psi(a) : \psi\in \mathscr C\} \mathfrak B})$ for every positive $a\in \mathfrak A$. By Lemma \ref{l:arveson}, the set $\pi (\mathscr C)$ of c.p.~maps that lift to $\mathscr C$, is a closed operator convex cone consisting only of nuclear maps. Thus by our Hahn--Banach type separation theorem (Theorem \ref{t:uniquecone}), $\phi \in \pi(\mathscr C)$ if and only if 
\begin{eqnarray*}
\phi(a) &\in& \overline{(\mathfrak B/\mathfrak J) \{ \eta (a) : \eta \in  \pi(\mathscr C)\} (\mathfrak B/\mathfrak J)} \\
&=& \overline{\pi(\mathfrak B) \{ \pi \circ \psi(a) :  \psi \in \mathscr C\} \pi(\mathfrak B)} \\
&=& \pi \left( \overline{ \mathfrak B \{ \psi(a) : \psi \in \mathscr C\} \mathfrak B} \right).
\end{eqnarray*}
\end{proof}


\section{Exact $C^\ast$-algebras and nuclear maps}

In this short section we prove a few well-known results about exact $C^\ast$-algebras. For a $C^\ast$-algebra $\mathfrak B$, we let $\multialg{\mathfrak B}$ denote its multiplier algebra, and $\corona{\mathfrak B} := \multialg{\mathfrak B}/\mathfrak B$ its corona algebra.

\begin{definition}
Let $\mathfrak A$ and $\mathfrak B$ be $C^\ast$-algebras, and $\phi \colon \mathfrak A \to \multialg{\mathfrak B}$ be a c.p.~map. We say that $\phi$ is \emph{weakly nuclear} if the c.p.~maps $b^\ast \phi(-) b \colon \mathfrak A \to \mathfrak B$ are nuclear for all $b\in \mathfrak B$.
\end{definition}

Recall, that a $C^\ast$-algebra is exact if and only if the $C^\ast$-algebra has a faithful representation on a Hilbert space which is nuclear.
By Arveson's extension theorem, this is equivalent to \emph{any} represenation on a Hilbert space being nuclear.
We need the following other characterisation of exactness.

\begin{proposition}\label{p:exactvsweaklynuc}
 A $C^\ast$-algebra $\mathfrak A$ is exact if and only if it holds that for any $\sigma$-unital $C^\ast$-algebra $\mathfrak B$ and any weakly nuclear map $\phi \colon \mathfrak A \to \multialg{\mathfrak B}$, $\phi$ is nuclear.
\end{proposition}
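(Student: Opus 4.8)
The plan is to prove both implications separately, and I expect the difficulty to be very unevenly distributed: the implication ``the stated property $\Rightarrow$ $\mathfrak A$ exact'' admits a short proof with a concrete choice of $\mathfrak B$, whereas ``$\mathfrak A$ exact $\Rightarrow$ the property'' is where the work lies, the trouble being that the codomain is $\multialg{\mathfrak B}$ rather than all of $\mathbb B(\mathcal H)$.

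For the direction that the property forces exactness, I would argue separably. Exactness is separably determined, so it suffices to show every separable subalgebra $\mathfrak A_0\subseteq\mathfrak A$ is exact; fix such an $\mathfrak A_0$ together with a faithful representation $\mathfrak A_0\subseteq\mathbb B(\mathcal H_0)$ on a \emph{separable} Hilbert space. Then $\mathfrak B:=\mathcal K(\mathcal H_0)$ is $\sigma$-unital and $\multialg{\mathcal K(\mathcal H_0)}=\mathbb B(\mathcal H_0)$. Using Arveson's extension theorem I would extend the inclusion $\mathfrak A_0\hookrightarrow\mathbb B(\mathcal H_0)$ to a c.p.\ map $\Phi\colon\mathfrak A\to\mathbb B(\mathcal H_0)=\multialg{\mathcal K(\mathcal H_0)}$. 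The key point is that $\Phi$ is automatically weakly nuclear: for $k\in\mathcal K(\mathcal H_0)$ the map $a\mapsto k^\ast\Phi(a)k$ is a norm-limit of the maps $a\mapsto k_m^\ast\Phi(a)k_m$ with $k_m$ finite-rank, and each of the latter factors through a matrix algebra (compress $\Phi$ to the finite-dimensional range of $k_m$), hence is factorable; so $k^\ast\Phi(-)k$ is nuclear. Applying the hypothesis gives that $\Phi$ is nuclear, and since the restriction of a nuclear map to a subalgebra is nuclear (it is a composition with the c.p.\ inclusion), the faithful representation $\mathfrak A_0\hookrightarrow\mathbb B(\mathcal H_0)$ is nuclear, whence $\mathfrak A_0$ is exact by the characterisation recalled above.

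For the converse, assume $\mathfrak A$ is exact, let $\mathfrak B$ be $\sigma$-unital, and let $\phi\colon\mathfrak A\to\multialg{\mathfrak B}$ be weakly nuclear. I would represent $\mathfrak B\subseteq\mathbb B(\mathcal H)$ faithfully and nondegenerately, so $\multialg{\mathfrak B}$ sits in $\mathbb B(\mathcal H)$ as the idealizer of $\mathfrak B$. By Stinespring, $\phi=V^\ast\sigma(-)V$ for a representation $\sigma\colon\mathfrak A\to\mathbb B(\mathcal H_\sigma)$; since $\mathfrak A$ is exact, $\sigma$ is a nuclear map, and as the composition of a nuclear map with the c.p.\ map $V^\ast(-)V$ is nuclear, $\phi$ is nuclear \emph{as a map into $\mathbb B(\mathcal H)$}. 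The remaining task is to upgrade this to nuclearity into the smaller codomain $\multialg{\mathfrak B}$. Fixing an approximate unit $(u_\lambda)$ of $\mathfrak B$, weak nuclearity says exactly that each $\phi_\lambda:=u_\lambda\phi(-)u_\lambda$ is nuclear into the clean codomain $\mathfrak B$, while $\phi_\lambda\to\phi$ in the point-strict topology. The plan is to combine the point-norm factorable approximations of $\phi$ into $\mathbb B(\mathcal H)$ coming from exactness with the genuinely $\mathfrak B$-valued factorable approximations of the $\phi_\lambda$ coming from weak nuclearity, producing factorable maps $\mathfrak A\to M_k\to\multialg{\mathfrak B}$, with matrix entries in $\multialg{\mathfrak B}$, that converge to $\phi$ point-norm.

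The main obstacle is precisely this codomain upgrade. Nuclearity depends on the target through the maximal tensor product, so being a point-norm limit of factorable maps into $\mathbb B(\mathcal H)$ is strictly weaker than being such a limit into $\multialg{\mathfrak B}$: the matrix entries $\beta(e_{pq})\in\mathbb B(\mathcal H)$ of the approximations furnished by exactness need not lie in $\multialg{\mathfrak B}$, and compressing them by elements of $\mathfrak B$ yields only strict, not norm, convergence. Weak nuclearity is exactly the hypothesis that controls the $\multialg{\mathfrak B}$-valued behaviour of $\phi$ along $(u_\lambda)$, and I expect the crux to be the rigorous reconciliation of the two families of approximations. Equivalently, I would try to show that $\phi\otimes\mathrm{id}_{\mathfrak E}$ is bounded from $\mathfrak A\otimes_{\min}\mathfrak E$ into $\multialg{\mathfrak B}\otimes_{\max}\mathfrak E$ for every $\mathfrak E$ -- with the $\phi_\lambda$ supplying the bound through the ideal $\mathfrak B\otimes_{\max}\mathfrak E$ -- and then use exactness of $\mathfrak A$ to pass from this tensorial estimate back to nuclearity of $\phi$.
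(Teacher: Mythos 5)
Your proof of the implication ``the lifting property $\Rightarrow$ $\mathfrak A$ exact'' is correct and is essentially the paper's own argument: restrict to a separable subalgebra, extend its faithful representation by Arveson, note that any c.p.\ map into $\mathbb B(\mathcal H_0) = \multialg{\mathbb K(\mathcal H_0)}$ is automatically weakly nuclear, and apply the hypothesis (you even make explicit the automatic weak nuclearity that the paper leaves implicit). The converse, however, is where the theorem lives, and there your proposal stops exactly where a proof would have to begin: everything after ``the remaining task is to upgrade this to nuclearity into the smaller codomain'' is a plan, not an argument, and the plan as stated cannot be completed. Your idea of combining approximations founders on the obstruction you yourself name: the maps $\phi_\lambda = u_\lambda\phi(-)u_\lambda$ are nuclear into $\mathfrak B$ but converge to $\phi$ only point-strictly, and point-strict limits of nuclear maps need not be nuclear. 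Indeed, if they were, then \emph{every} weakly nuclear map into $\multialg{\mathfrak B}$ would be nuclear with no hypothesis on $\mathfrak A$ at all, and the forward direction of the proposition would then force every $C^\ast$-algebra to be exact, which is absurd. So exactness must be consumed \emph{inside} the upgrade step, and your sketch supplies no mechanism that does this; the $\mathbb B(\mathcal H)$-valued factorable approximations of $\phi$ and the $\mathfrak B$-valued ones of the $\phi_\lambda$ are merely listed side by side.

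The tensorial reformulation does not close the gap either, for two independent reasons. First, weak nuclearity only controls the compressions $(u_\lambda\otimes 1)(\phi\otimes\mathrm{id}_{\mathfrak E})(x)(u_\lambda\otimes 1)$ inside the ideal $\mathfrak B\otimes_{\max}\mathfrak E$, and such compressions need not detect the norm in $\multialg{\mathfrak B}\otimes_{\max}\mathfrak E$, because that ideal need not be essential there. Concretely, take $\mathfrak B = \mathbb K$ and $\mathfrak E = \mathbb B(\ell^2)$: the kernel of the quotient $\mathbb B(\ell^2)\otimes_{\max}\mathbb B(\ell^2) \to \mathbb B(\ell^2)\otimes_{\min}\mathbb B(\ell^2)$ is nonzero (Junge--Pisier) and is annihilated by the ideal $\mathbb K\otimes\mathbb B(\ell^2)$, so there are nonzero elements all of whose compressions by $u_\lambda\otimes 1$ vanish; hence ``the $\phi_\lambda$ supplying the bound through the ideal'' is not a bound at all. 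Second, even granting min-to-max continuity of $\phi\otimes\mathrm{id}_{\mathfrak E}$ for every $\mathfrak E$, the passage back to nuclearity of $\phi$ is the \emph{converse} of \cite[Lemma 3.6.10]{BrownOzawa-book-approx}; it is not a standard fact, and establishing it for exact $\mathfrak A$ would itself require local-reflexivity/Effros--Haagerup machinery that your sketch does not contain. The paper avoids all of this with a different engine: for a unital separable $\mathfrak A_0\subseteq\mathfrak A$ it takes a unital nuclear embedding $\iota\colon\mathfrak A_0\to\multialg{\mathbb K}$ (nuclear because $\mathfrak A_0$ is exact), amplifies it to a nuclear map $\Phi\colon\mathfrak A_0\to\multialg{\mathfrak B\otimes\mathbb K}$, and invokes the Kasparov--Voiculescu absorption theorem of \cite{Kasparov-Stinespring}, in the generalised form of \cite{DadarlatEilers-classification}: $\Phi$ absorbs every unital weakly nuclear c.p.\ map, so the corner copy $\phi_0$ of $\phi|_{\mathfrak A_0}$ is a point-norm limit of compressions $v_n^\ast\Phi(-)v_n$ by isometries, hence nuclear, and cutting down by $1\otimes e_{11}$ recovers $\phi|_{\mathfrak A_0}$. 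That absorption theorem is precisely the ``reconciliation mechanism'' your plan is missing; without it, or a genuine substitute for it, the second direction remains unproved.
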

\begin{proof}
 Suppose that any weakly nuclear map from $\mathfrak A$ into a multiplier algebra of a $\sigma$-unital $C^\ast$-algebra is nuclear. To show that $\mathfrak A$ is exact, it suffices to show that every separable $C^\ast$-subalgebra is exact.
 Let $\mathfrak A_0\subseteq \mathfrak A$ be a separable $C^\ast$-subalgebra, and let $\pi \colon \mathfrak A_0 \to \multialg{\mathbb K}$ be a faithful representation. 
 By Arveson's extension theorem, we may extend this map to a c.p.~map $\tilde \pi \colon \mathfrak A \to \multialg{\mathbb K}$, which is nuclear by assumption. Thus $\pi$ is nuclear and hence $\mathfrak A_0$ is exact. It follows that $\mathfrak A$ is exact.
 
 Now suppose that $\mathfrak A$ is exact, that $\mathfrak B$ is any $\sigma$-unital $C^\ast$-algebra and $\phi \colon \mathfrak A \to \multialg{\mathfrak B}$ is weakly nuclear.
 By standard arguments we may assume that $\mathfrak A$ and $\phi$ are unital. It suffices to show that for any unital, separable $C^\ast$-subalgebra $\mathfrak A_0$ the restriction $\phi|_{\mathfrak{A}_0}$ is nuclear.
 Let $\iota \colon \mathfrak A_0 \hookrightarrow \multialg{\mathbb K}$ be a unital inclusion. Since $\mathfrak A_0$ is a $C^\ast$-subalgebra of an exact $C^\ast$-algebra, it is itself exact, and thus $\iota$ is nuclear.
 Let $\Phi$ be the composition
 \[
  \mathfrak A_0 \xrightarrow{\iota} \multialg{\mathbb K} \xrightarrow{1 \otimes id} \multialg{\mathfrak B} \otimes \multialg{\mathbb K} \hookrightarrow \multialg{\mathfrak B \otimes \mathbb K},
 \]
 which is nuclear. It basically follows from a result of Kasparov in \cite{Kasparov-Stinespring} (see \cite{DadarlatEilers-classification} for details on generalising Kasparov's result to the case which we are considering) 
 that $\Phi$ absorbs any unital weakly nuclear c.p.~map.
 In particular, it absorbs the map $\phi_0$, defined as the composition
 \[
  \mathfrak A_0 \xrightarrow{\phi|_{\mathfrak A_0}} \multialg{\mathfrak B} \xrightarrow{id \otimes e_{11}} \multialg{\mathfrak B} \otimes \multialg{\mathbb K} \hookrightarrow \multialg{\mathfrak B \otimes \mathbb K}.
 \]
 Thus there is a sequence of isometries $(v_n)$ in $\multialg{\mathfrak B \otimes \mathbb K}$ such that $v_n^\ast \Phi(-) v_n$ converges point-norm to $\phi_0$. Since $\Phi$ is nuclear it follows that $\phi_0$ is nuclear.
 There is a conditional expectation $\Psi$ given by the composition
 \[
  \multialg{\mathfrak B \otimes \mathbb K} \xrightarrow{(1 \otimes e_{11})(-)(1 \otimes e_{11})} \multialg{\mathfrak B} \otimes e_{11} \cong \multialg{\mathfrak B}
 \]
 such that $\Psi \circ \phi_0 = \phi|_{\mathfrak A_0}$ and thus $\phi|_{\mathfrak A_0}$ is nuclear.
\end{proof}

Recall, that when $0 \to \mathfrak B \to \mathfrak E \to \mathfrak A \to 0$ is an extension of $C^\ast$-algebras, 
there is an induced $\ast$-homomorphism $\tau \colon \mathfrak A \to \corona{\mathfrak B} := \multialg{\mathfrak B}/\mathfrak B$ called the \emph{Busby map}.
Also, there is a canonical isomorphism from $\mathfrak E$ onto the pull-back 
\[
 \mathfrak A \oplus_{\corona{\mathfrak B}} \multialg{\mathfrak B} := \{ (a,m) \in \mathfrak A \oplus \multialg{\mathfrak B} : \tau(a) = m + \mathfrak B\}.
\]

An interesting observation can be made on extensions of exact $C^\ast$-algebras by nuclear $C^\ast$-algebras. This will be used in Theorem \ref{t:Xlifting} to prove an Effros--Haagerup type lifting result, cf.~\cite{EffrosHaagerup-lifting}.

\begin{corollary}\label{c:exactext}
 Let $0 \to \mathfrak B \to \mathfrak E \to \mathfrak A \to 0$ be an extension of $C^\ast$-algebras with Busby map $\tau$. Suppose that $\mathfrak A$ is exact and $\mathfrak B$ is $\sigma$-unital and nuclear. 
 Then $\mathfrak E$ is exact if and only if $\tau$ is nuclear.
\end{corollary}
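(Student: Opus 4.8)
The plan is to work with the canonical realization of $\mathfrak E$ as the pull-back $\mathfrak A \oplus_{\corona{\mathfrak B}} \multialg{\mathfrak B}$, and to study the two coordinate $\ast$-homomorphisms $q \colon \mathfrak E \to \mathfrak A$ and $\sigma \colon \mathfrak E \to \multialg{\mathfrak B}$, the latter being the canonical extension of the inclusion $\mathfrak B \hookrightarrow \multialg{\mathfrak B}$. These satisfy $\pi_{\corona} \circ \sigma = \tau \circ q$, where $\pi_{\corona} \colon \multialg{\mathfrak B} \to \corona{\mathfrak B}$ is the quotient map. The first observation, valid regardless of exactness, is that $\sigma$ is weakly nuclear: for $b \in \mathfrak B$ the map $b^\ast \sigma(-) b$ takes values in the ideal $\mathfrak B$, and since $\mathfrak B$ is nuclear the identity on $\mathfrak B$ is a nuclear map, so any c.p.\ map into $\mathfrak B$ is nuclear. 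I will also use repeatedly, via the Observation following Example~\ref{e:nuclearmaps}, that pre- or post-composing a nuclear map with a $\ast$-homomorphism stays nuclear.

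For the direction ``$\mathfrak E$ exact $\Rightarrow \tau$ nuclear'', since $\mathfrak E$ is exact and $\mathfrak B$ is $\sigma$-unital, Proposition~\ref{p:exactvsweaklynuc} upgrades the weak nuclearity of $\sigma$ to genuine nuclearity. Composing with $\pi_{\corona}$ then shows $\tau \circ q = \pi_{\corona} \circ \sigma$ is nuclear, and it remains to descend this along the surjection $q$, i.e.\ to deduce that $\tau$ itself is nuclear; here I may assume $\mathfrak A$ separable after passing to separable subalgebras. For the converse ``$\tau$ nuclear $\Rightarrow \mathfrak E$ exact'', I first note that $\tau \circ q$ is nuclear (post-composing the $\ast$-homomorphism $q$ with the nuclear map $\tau$), so again $\pi_{\corona} \circ \sigma$ is nuclear. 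I would then argue that the weakly nuclear map $\sigma$, being nuclear modulo the nuclear ideal $\mathfrak B$, is in fact nuclear. Granting this, exactness of $\mathfrak E$ follows by exhibiting a faithful nuclear representation: choosing a faithful representation $\pi_{\mathfrak B}$ of $\multialg{\mathfrak B}$ whose restriction to $\mathfrak B$ is faithful, and a faithful (hence nuclear, as $\mathfrak A$ is exact) representation $\pi_{\mathfrak A}$ of $\mathfrak A$, the representation $(\pi_{\mathfrak B} \circ \sigma) \oplus (\pi_{\mathfrak A} \circ q)$ of $\mathfrak E$ is faithful with both summands nuclear, and a faithful nuclear representation witnesses exactness.

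The main obstacle in both directions is the same phenomenon: transporting nuclearity of c.p.\ maps across the extension $0 \to \mathfrak B \to \mathfrak E \to \mathfrak A \to 0$ with $\mathfrak B$ nuclear---for the first direction descending $\tau \circ q$ to $\tau$ along the surjection $q$, and for the second promoting $\sigma$ from ``weakly nuclear and nuclear modulo $\mathfrak B$'' to nuclear. I expect to handle both with a quasicentral approximate unit $(u_\lambda)$ for $\mathfrak B$ in $\mathfrak E$ (resp.\ in $\multialg{\mathfrak B}$): compressions by $(1 - u_\lambda)^{1/2}$ act as the identity modulo $\mathfrak B$ and annihilate $\mathfrak B$ in the limit, which lets the matrix factorizations of $\tau \circ q$ be made approximately constant on cosets of $\mathfrak B$ and then corrected, via Arveson's extension theorem, to honest factorizations through matrix algebras defined on $\mathfrak A$; symmetrically, splitting $\sigma$ into a corner $u_\lambda^{1/2} \sigma(-) u_\lambda^{1/2}$ into $\mathfrak B$ (nuclear by weak nuclearity) and a complementary piece governed modulo $\mathfrak B$ by the nuclear map $\tau \circ q$ should assemble nuclearity of $\sigma$.

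An alternative for the converse, bypassing this gluing, is to invoke Choi--Effros to lift the nuclear map $\tau$ to a c.c.p.\ section of $q$, making the extension semisplit, and then to run a $3 \times 3$ diagram chase: the semisplitting keeps the columns $0 \to \mathfrak B \otimes_{\min} E' \to \mathfrak E \otimes_{\min} E' \to \mathfrak A \otimes_{\min} E' \to 0$ exact for every $C^\ast$-algebra $E'$, while exactness of $\mathfrak A$ and of $\mathfrak B$ makes the relevant rows exact, forcing the $\mathfrak E$-row to be exact and hence $\mathfrak E$ to be exact. Either way, the content beyond the formal reductions is precisely this compatibility of nuclearity with the ideal $\mathfrak B$, and that is where I expect the real work to lie.
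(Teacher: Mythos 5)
Your structural skeleton is sound, and parts of it parallel the paper: the observation that $\sigma$ is weakly nuclear because $\mathfrak B$ is nuclear, the use of Proposition~\ref{p:exactvsweaklynuc} to convert weak nuclearity into nuclearity over an exact domain, and the faithful-nuclear-representation criterion for exactness are all correct. The genuine gap is that, in both directions, the entire content of the corollary is delegated to claims you only sketch: (i) descending nuclearity of $\tau\circ q$ to nuclearity of $\tau$, and (ii) the ``two out of three'' assertion that $\sigma$, being weakly nuclear and nuclear modulo $\mathfrak B$, is nuclear. Neither is a formality, and the mechanism you propose for (i) fails as stated: precomposing a factorization leg $\alpha\colon \mathfrak E\to M_n$ with the compression $(1-u_\lambda)^{1/2}(-)(1-u_\lambda)^{1/2}$ kills each \emph{fixed} element of $\mathfrak B$ in the limit, but it is not uniformly small on the unit ball of $\mathfrak B$: for $\mu\gg\lambda$ the element $(1-u_\lambda)^{1/2}u_\mu(1-u_\lambda)^{1/2}$ is a positive element of $\mathfrak B$ of norm close to $1$, on which the compressed map can be large. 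A c.p.\ map into $M_n$ (approximately) factors through $\mathfrak A=\mathfrak E/\mathfrak B$ only when its whole restriction to $\mathfrak B$ is uniformly small, and Arveson's extension theorem cannot manufacture this: it extends c.p.\ maps already defined on operator systems in $\mathfrak A$, so you would first need the compactness-and-perturbation argument converting pointwise vanishing into an honest c.p.\ map on a finite-dimensional operator system --- which is precisely the technical core of the Effros--Haagerup theorem you would be re-proving. Passing to separable subalgebras does not dissolve this; the descent problem persists verbatim at the separable level. The paper never pushes nuclearity \emph{down} the quotient; it lifts $\tau$ instead: exactness of $\mathfrak E$ gives local splitness of the extension by \cite{EffrosHaagerup-lifting}, nuclearity of $\mathfrak B$ upgrades this (again by \cite{EffrosHaagerup-lifting}) to a c.p.\ lift $\tilde\tau\colon\mathfrak A_0\to\multialg{\mathfrak B}$ of $\tau|_{\mathfrak A_0}$ for every separable $\mathfrak A_0\subseteq\mathfrak A$, and then $\tilde\tau$ is weakly nuclear, hence nuclear by Proposition~\ref{p:exactvsweaklynuc} applied to the exact algebra $\mathfrak A_0$, so $\tau|_{\mathfrak A_0}=\pi\circ\tilde\tau$ is nuclear. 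Once the object analysed is a lift of $\tau$ rather than $\tau\circ q$, no descent is needed.

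Your converse has a separate flaw: the alternative route invokes Choi--Effros to produce a \emph{global} c.c.p.\ section of $q$, but $\mathfrak A$ is not assumed separable (only $\mathfrak B$ is $\sigma$-unital), and the Choi--Effros theorem \cite{ChoiEffros-lifting} requires a separable domain. The paper's converse applies Choi--Effros only to $\tau|_E$ for \emph{finite-dimensional} operator systems $E\subseteq\mathfrak A$ --- no separability needed --- obtaining c.p.\ lifts $(\iota,\tilde\tau)\colon E\to \mathfrak A\oplus_{\corona{\mathfrak B}}\multialg{\mathfrak B}\cong\mathfrak E$ of the inclusions, i.e.\ local splitness, and then cites \cite[Exercise 3.9.8]{BrownOzawa-book-approx} that a locally split extension with exact ideal and exact quotient has exact middle algebra. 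Your preferred route through $\sigma$, claim (ii), is in fact true and could be made rigorous: the matrix-algebra legs $M_n\to\corona{\mathfrak B}$ of factorizations of $\pi_{\corona}\circ\sigma$ lift to $\multialg{\mathfrak B}$ (c.p.\ maps out of $M_n$ always lift through quotients, since positive elements of $M_n(\corona{\mathfrak B})$ lift to positive elements of $M_n(\multialg{\mathfrak B})$), and the lifted factorable maps can be glued to the nuclear corners $u_\lambda^{1/2}\sigma(-)u_\lambda^{1/2}$ along a quasicentral approximate unit. But as written this is asserted rather than proved, and it --- not the formal reductions surrounding it --- is where the theorem lives.
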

\begin{proof}
 If $\mathfrak E$ is non-unital we may consider the unitised extension $0 \to \mathfrak B \to \mathfrak E^\dagger \to \mathfrak A^\dagger \to 0$. Since $\tau$ is nuclear if and only if the unitisation $\tau^\dagger$ is nuclear, and $\mathfrak E$ is exact if
 and only if $\mathfrak E^\dagger$ is exact, we may assume that $\mathfrak E$ is unital.
 It is well-known (see e.g.~\cite[Exercise 3.9.8]{BrownOzawa-book-approx}) that the extension algebra of an extension of exact $C^\ast$-algebras is exact if the extension is locally split. The converse is also true, and follows from \cite{EffrosHaagerup-lifting}.
 If $\tau$ is nuclear then for any finite dimensional operator system $E \subseteq \mathfrak A$, there is a c.p.~lift $\tilde \tau \colon E \to \multialg{\mathfrak B}$ of $\tau|_E$ by the Choi--Effros lifting theorem \cite{ChoiEffros-lifting}.
 If $\iota \colon E \to \mathfrak A$ is the inclusion, then $(\iota,\tilde \tau) \colon E \to \mathfrak A \oplus_{\corona{\mathfrak B}} \multialg{\mathfrak B} \cong \mathfrak E$ is a c.p.~lift of $\iota$.
 Hence the extension is locally split and thus $\mathfrak E$ is exact.
 
 If $\mathfrak E$ is exact then it is locally split as noted above. Since $\mathfrak B$ is nuclear it follows from \cite{EffrosHaagerup-lifting} that for any separable $C^\ast$-subalgebra $\mathfrak A_0 \subseteq \mathfrak A$ there is a 
 c.p.~lift $\tilde \tau \colon \mathfrak A_0 \to \multialg{\mathfrak B}$ of $\tau|_{\mathfrak A_0}$. 
 Since $\mathfrak B$ is nuclear it follows that $\tilde \tau$ is weakly nuclear, and since $\mathfrak A_0$ is exact it follows from Proposition \ref{p:exactvsweaklynuc} that $\tilde \tau$ is nuclear.
 Hence $\tau|_{\mathfrak A_0} = \pi \circ \tilde \tau$ is nuclear. Since $\mathfrak A_0$ was arbitrarily chosen, it follows that $\tau$ is nuclear.
\end{proof}


\section{Ideal related completely positive selections}

In this section we prove ideal related selection results for completely positive maps, where we by ideal related mean $\mathsf X$-equivariant as defined below. The purpose of these selection results, is to construct ``many'' $\mathsf X$-equivariant c.p.~maps between two $\mathsf X$-$C^\ast$-algebras, which is important when one wishes to lift $\mathsf X$-equivariant c.p.~maps to $\mathsf X$-equivariant c.p.~maps.

\subsection{Actions of topological spaces on $C^\ast$-algebras}

When $\mathsf X$ is a topological space, we let $\mathbb O(\mathsf X)$ denote the complete lattice of open subsets of $\mathsf X$.
Also, for a $C^\ast$-algebra $\mathfrak A$, we let $\mathbb I(\mathfrak A)$ denote the complete lattice of two-sided, closed ideals in $\mathfrak A$.

\begin{definition}
Let $\mathsf X$ be a topological space. An \emph{action of $\mathsf X$ on a $C^\ast$-algebra $\mathfrak A$} is an order preserving map $\psi \colon \mathbb O(\mathsf X) \to \mathbb I(\mathfrak A)$,
i.e.~a map such that if $\mathsf U \subseteq \mathsf V$ in $\mathbb O(\mathsf X)$ then $\psi(\mathsf U) \subseteq \psi(\mathsf V)$.

A $C^\ast$-algebra $\mathfrak A$ together with an action $\psi$ of $\mathsf X$ on $\mathfrak A$, is called an \emph{$\mathsf X$-$C^\ast$-algebra}. It is customary to suppress the action $\psi$ in the notation, by simply saying that $\mathfrak A$ is an $\mathsf X$-$C^\ast$-algebra, and defining $\mathfrak A(\mathsf U) := \psi(\mathsf U)$ for $\mathsf U \in \mathbb O(\mathsf X)$.

A map $\phi \colon \mathfrak A \to \mathfrak B$ of $C^\ast$-algebras with actions of $\mathsf X$ is called \emph{$\mathsf X$-equivariant} if $\phi(\mathfrak A(\mathsf U)) \subseteq \mathfrak B(\mathsf U)$ for every $\mathsf U\in \mathbb O(\mathsf X)$.
\end{definition}

\begin{remark}\label{r:Xeqcone}
If $\mathsf X$ is a space acting on the $C^\ast$-algebras $\mathfrak A$ and $\mathfrak B$, then the set $CP(\mathsf X; \mathfrak A, \mathfrak B)$ of $\mathsf X$-equivariant c.p.~maps $\mathfrak A \to \mathfrak B$ is a closed operator convex cone.

In particular, the set of all nuclear, $\mathsf X$-equivariant c.p.~maps $\mathfrak A \to \mathfrak B$ is a closed operator convex cone, as this is the set $CP(\mathsf X; \mathfrak A, \mathfrak B) \cap CP_\nuc(\mathfrak A, \mathfrak B)$, and since being a closed operator convex cone is preserved under intersections.
\end{remark}

It is often necessary to impose stronger conditions on our actions.

\begin{definition}\label{d:actions}
Let $\mathfrak A$ be an $\mathsf X$-$C^\ast$-algebra. We say that $\mathfrak A$ is
\begin{itemize}
 \item \emph{finitely lower semicontinuous} if $\mathfrak A (\mathsf X) = \mathfrak A$, and if it respects finite infima, i.e.~for open subsets $\mathsf U$ and $\mathsf V$ of $\mathsf X$ we have 
 \[
 \mathfrak A(\mathsf U) \cap \mathfrak A(\mathsf V) = \mathfrak A(\mathsf U \cap \mathsf V),
 \]
 \item \emph{lower semicontinuous} if $\mathfrak A(\mathsf X) = \mathfrak A$, and if it respects arbitrary infima, i.e.~for any family $(\mathsf U_\lambda)$ of open subsets of $\mathsf X$ we have
 \[
 \bigcap_\lambda \mathfrak A(\mathsf U_\lambda) = \mathfrak A(\mathsf U),
 \]
 where $\mathsf U$ is the interior of $\bigcap_\lambda \mathsf U_\lambda$,
 \item \emph{finitely upper semicontinuous} if $\mathfrak A(\emptyset) = 0$, and if it respects finite suprema, i.e.~for open subsets $\mathsf U$ and $\mathsf V$ of $\mathsf X$ we have
 \[
  \mathfrak A(\mathsf U) + \mathfrak A(\mathsf V) = \mathfrak A(\mathsf U \cup \mathsf V),
 \]
 \item \emph{monotone upper semicontinuous} if it respects monotone suprema, i.e.~for any increasing net $(\mathsf U_\lambda)$ of open subsets of $\mathsf X$ we have
 \[
  \overline{ \bigcup_\lambda \mathfrak A(\mathsf U_\lambda)} = \mathfrak A(\bigcup_\lambda \mathsf U_\lambda),
 \]
 \item \emph{upper semicontinuous} if it is finitely and monotone upper semicontinuous,
\end{itemize}
\end{definition}

Note that an upper semicontinuous $\mathsf X$-$C^\ast$-algebra $\mathfrak C$ satisfies $\mathfrak C(\emptyset) = 0$. This condition ensures that the map $\Psi \colon \mathbb I(\mathfrak C) \to \mathbb O(\mathsf X)$ given by
\[
 \Psi(\mathfrak I) = \bigcup \{ \mathsf U \in \mathbb O(\mathsf X) : \mathfrak C(\mathsf U) \subseteq \mathfrak I\}
\]
is well-defined. This will be used in the proof of Proposition \ref{p:fromAtoC}. That a lower semicontinuous $\mathsf X$-$C^\ast$-algebra $\mathfrak C$ satisfies $\mathfrak C(\mathsf X)= \mathfrak C$, is for a similar reason.

\begin{definition}
Let $\mathfrak A$ be an $\mathsf X$-$C^\ast$-algebra, $a\in \mathfrak A$ and $\mathsf U\in \mathbb O(\mathsf X)$. We say that $a$ is \emph{$\mathsf U$-full}, if $\mathsf U$ is minimal amongst open sets $\mathsf V \in \mathbb O(\mathsf X)$ for which $a\in \mathfrak A(\mathsf V)$, i.e.~$a\in \mathfrak A(\mathsf U)$ and whenever 
$\mathsf V \in \mathbb O(\mathsf X)$ such that $a\in \mathfrak A(\mathsf V)$ then $\mathsf U \subseteq \mathsf V$.
\end{definition}

If $a$ is $\mathsf U$-full, then the set $\mathsf U$ is unique.

\begin{notation}
 Whenever $a \in \mathfrak A$ is $\mathsf U$-full for some $\mathsf U \in \mathbb O(\mathsf X)$, then we denote by $\mathsf U_a := \mathsf U$.
\end{notation}

Any element $a\in \mathfrak A$ in a $C^\ast$-algebra generates a two-sided closed ideal $\overline{\mathfrak A a \mathfrak A}$ which corresponds uniquely to an open subset $\mathsf U$ of $\Prim \mathfrak A$. If $\mathfrak A$ is equipped with the canonical action $\mathbb O(\Prim \mathfrak A) \xrightarrow{\cong} \mathbb I(\mathfrak A)$, then $a$ is $\mathsf U$-full for this set $\mathsf U\in \mathbb O(\Prim \mathfrak A)$, so $\mathfrak A(\mathsf U_a) = \overline{\mathfrak A a \mathfrak A}$.

If $\mathfrak A$ is a general $\mathsf X$-$C^\ast$-algebra, and $a\in \mathfrak A$ is $\mathsf U_a$-full, then one should think of $\mathsf U_a$ as being the open subset of $\mathsf X$ generated by $a$.

We will use the following result from \cite{GabeRuiz-absrep}. For the sake of completion, we give a proof.

\begin{proposition}
Let $\mathfrak A$ be an $\mathsf X$-$C^\ast$-algebra. 
Then $\mathfrak A$ is lower semicontinuous if and only if every element $a\in \mathfrak A$ is $\mathsf U_a$-full for some (unique) $\mathsf U_a\in \mathbb O(\mathsf X)$.
\end{proposition}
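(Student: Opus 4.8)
The plan is to prove both implications by directly unwinding the definitions, the key being that $\mathsf U_a$-fullness asserts the existence of a \emph{least} open set whose ideal contains $a$, which dovetails precisely with the arbitrary-infima clause in the definition of lower semicontinuity.

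For the forward implication, suppose $\mathfrak A$ is lower semicontinuous and fix $a \in \mathfrak A$. I would consider the family
\[
\mathcal V := \{ \mathsf V \in \mathbb O(\mathsf X) : a \in \mathfrak A(\mathsf V)\},
\]
which is nonempty since $\mathfrak A(\mathsf X) = \mathfrak A$ forces $\mathsf X \in \mathcal V$. Letting $\mathsf U_a$ be the interior of $\bigcap_{\mathsf V \in \mathcal V}\mathsf V$, lower semicontinuity gives $\mathfrak A(\mathsf U_a) = \bigcap_{\mathsf V \in \mathcal V}\mathfrak A(\mathsf V)$, and since $a$ lies in each $\mathfrak A(\mathsf V)$ by construction, we get $a \in \mathfrak A(\mathsf U_a)$. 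Minimality is then immediate: if $a \in \mathfrak A(\mathsf W)$ for some open $\mathsf W$, then $\mathsf W \in \mathcal V$, so $\mathsf U_a \subseteq \bigcap_{\mathsf V \in \mathcal V} \mathsf V \subseteq \mathsf W$. Thus $a$ is $\mathsf U_a$-full, and uniqueness of $\mathsf U_a$ has already been observed to follow from the definition.

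For the reverse implication, suppose every $a$ is $\mathsf U_a$-full; I would verify the two conditions defining lower semicontinuity. The equality $\mathfrak A(\mathsf X) = \mathfrak A$ is clear, since each $a$ lies in $\mathfrak A(\mathsf U_a) \subseteq \mathfrak A(\mathsf X)$ by order preservation. For the infima condition, fix a family $(\mathsf U_\lambda)$ and set $\mathsf U := \operatorname{int}(\bigcap_\lambda \mathsf U_\lambda)$. The inclusion $\mathfrak A(\mathsf U) \subseteq \bigcap_\lambda \mathfrak A(\mathsf U_\lambda)$ follows from $\mathsf U \subseteq \mathsf U_\lambda$ and order preservation. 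Conversely, if $a \in \bigcap_\lambda \mathfrak A(\mathsf U_\lambda)$, then minimality of $\mathsf U_a$ gives $\mathsf U_a \subseteq \mathsf U_\lambda$ for every $\lambda$; since $\mathsf U_a$ is open, this upgrades to $\mathsf U_a \subseteq \operatorname{int}(\bigcap_\lambda \mathsf U_\lambda) = \mathsf U$, whence $a \in \mathfrak A(\mathsf U_a) \subseteq \mathfrak A(\mathsf U)$.

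I do not expect any serious obstacle here; the argument is essentially a lattice-theoretic translation of the two formulations. The one point deserving care is the passage, in both directions, from ``$\mathsf U_a$ contained in an intersection of open sets'' to ``$\mathsf U_a$ contained in the interior of that intersection'', which is exactly where the openness of $\mathsf U_a$ (equivalently, that $\mathsf U_a$ is a legitimate member of $\mathbb O(\mathsf X)$) is used, together with the corresponding appeal to the \emph{arbitrary} rather than merely finite infima clause of lower semicontinuity in the forward direction.
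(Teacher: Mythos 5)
Your proof is correct and follows essentially the same route as the paper's: in the forward direction, both take $\mathsf U_a$ to be the interior of the intersection of all open sets whose ideals contain $a$ and apply the arbitrary-infima clause; in the reverse direction, both deduce $\mathsf U_a \subseteq \mathsf U_\lambda$ from minimality and upgrade to $\mathsf U_a \subseteq \mathsf U$ by openness of $\mathsf U_a$, with $\mathfrak A(\mathsf X) = \mathfrak A$ handled by the same observation (yours stated directly, the paper's as a one-line contradiction).
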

\begin{proof}
If $\mathfrak A$ is lower semicontinuous and $a\in \mathfrak A$, let $\mathsf U_a$ be the interior of the intersection of all open sets $\mathsf U\subseteq \mathsf X$ for which $a \in \mathfrak A(\mathsf U)$. As $\mathfrak A(\mathsf X) = \mathfrak A$, this construction is well-defined. By lower semicontinuity, $a\in \mathfrak A(\mathsf U_a)$, so $a$ is $\mathsf U_a$-full, as $\mathsf U_a$ is minimal amongst $\mathsf U\in \mathbb O(\mathsf X)$ for which $a\in \mathfrak A(\mathsf U)$.

Suppose every $a\in \mathfrak A$ is $\mathsf U_a$-full, let $(\mathsf U_\lambda)$ be a family of sets in $\mathbb O(\mathsf X)$, and $\mathsf U$ be the interior of the intersection of $(\mathsf U_\lambda)$. Clearly $\mathfrak A(\mathsf U) \subseteq \bigcap \mathfrak A(\mathsf U_\lambda)$. Let $a\in \bigcap \mathfrak A(\mathsf U_\lambda)$. As $a\in \mathfrak A(\mathsf U_\lambda)$ for all $\lambda$, it follows that $\mathsf U_a \subseteq \mathsf U_\lambda$ for all $\lambda$, and thus $\mathsf U_a \subseteq \mathsf U$. So $a\in \mathfrak A(\mathsf U)$ and thus $\mathfrak A(\mathsf U) = \bigcap \mathfrak A(\mathsf U_\lambda)$. Finally, suppose $a \in \mathfrak A \setminus \mathfrak A(\mathsf X)$. Then $a\in \mathfrak A(\mathsf U_a) \subseteq \mathfrak A(\mathsf X)$, a contradiction, so $\mathfrak A = \mathfrak A(\mathsf X)$.
\end{proof}

\begin{example}
 Let $\mathsf X$ be a locally compact Hausdorff space. A \emph{$C_0(\mathsf X)$-algebra} is a $C^\ast$-algebra $\mathfrak A$ together with an essential $\ast$-homomorphism $\Phi$ from $C_0(\mathsf X)$ into the center of $\multialg{\mathfrak A}$.
 Essential means that $\overline{\Phi(C_0(\mathsf X)) \mathfrak A} = \mathfrak A$.
 As described in \cite[Section 2.1]{MeyerNest-bootstrap} there is a one-to-one correspondence between such $\ast$-homomorphisms, and actions of $\mathsf X$ on $\mathfrak A$ which are finitely lower semicontinuous and upper semicontinuous.
 The induced action is given by $\mathfrak A(\mathsf U) = \overline{\mathfrak A \Phi(C_0(\mathsf U))}$ for $\mathsf U\in \mathbb O(\mathsf X)$.
 A $C_0(\mathsf X)$-algebra is called \emph{continuous} if for every $a\in \mathfrak A$, the set $\mathsf U_a := \{ x\in \mathsf X : \| a + \mathfrak A(\mathsf X \setminus \{ x\})\| > 0\}$ is open.
 If this is the case it is easily seen that $a$ is $\mathsf U_a$-full, and conversely, if $a$ is $\mathsf U_a$-full, then $\mathsf U_a = \{ x \in \mathsf X : \| a + \mathfrak A(\mathsf X \setminus \{ x\})\| > 0\}$.
 Thus $\mathfrak A$ as an $\mathsf X$-$C^\ast$-algebra is continuous if and only if $\mathfrak A$ as a $C_0(\mathsf X)$-algebra is continuous.
 \end{example}

\begin{observation}
 Let $\mathfrak A$ and $\mathfrak B$ be $\mathsf X$-$C^\ast$-algebras with $\mathfrak A$ lower semicontinuous. 
 Then a map $\phi \colon \mathfrak A \to \mathfrak B$ is $\mathsf X$-equivariant if and only if for all (positive) $a\in \mathfrak A$, $\phi(a) \in \mathfrak B(\mathsf U_a)$.
\end{observation}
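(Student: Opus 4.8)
The plan is to prove both implications directly, leaning on the preceding proposition, which guarantees that the lower semicontinuity of $\mathfrak A$ makes every $a \in \mathfrak A$ $\mathsf U_a$-full for a unique $\mathsf U_a \in \mathbb O(\mathsf X)$. The three facts I will use repeatedly are: that $a \in \mathfrak A(\mathsf U_a)$ by fullness; that $\mathsf U_a$ is minimal, so $a \in \mathfrak A(\mathsf V)$ forces $\mathsf U_a \subseteq \mathsf V$; and that the action $\mathsf U \mapsto \mathfrak B(\mathsf U)$ is order preserving. As throughout the paper, $\phi$ is understood to be linear (a c.p.~map), which is what makes the reduction to positive elements legitimate.

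For the forward implication, suppose $\phi$ is $\mathsf X$-equivariant and let $a \in \mathfrak A$ be arbitrary. Since $a \in \mathfrak A(\mathsf U_a)$, equivariance gives $\phi(a) \in \phi(\mathfrak A(\mathsf U_a)) \subseteq \mathfrak B(\mathsf U_a)$, which is the asserted condition (in particular for positive $a$).

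For the converse, assume $\phi(a) \in \mathfrak B(\mathsf U_a)$ for every positive $a \in \mathfrak A$, and fix $\mathsf U \in \mathbb O(\mathsf X)$; I must show $\phi(\mathfrak A(\mathsf U)) \subseteq \mathfrak B(\mathsf U)$. For a positive $a \in \mathfrak A(\mathsf U)$, minimality of $\mathsf U_a$ yields $\mathsf U_a \subseteq \mathsf U$, whence $\phi(a) \in \mathfrak B(\mathsf U_a) \subseteq \mathfrak B(\mathsf U)$ by monotonicity. To treat a general $a \in \mathfrak A(\mathsf U)$, I write it as a linear combination of four positive elements (via its self-adjoint and imaginary parts and their positive/negative parts); because $\mathfrak A(\mathsf U)$ is a $\ast$-closed ideal, each of these positive parts again lies in $\mathfrak A(\mathsf U)$, so the previous step applies to each. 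Linearity of $\phi$, together with the fact that $\mathfrak B(\mathsf U)$ is a linear subspace, then gives $\phi(a) \in \mathfrak B(\mathsf U)$.

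There is no serious obstacle here; the one point requiring a little care is the passage from arbitrary to positive elements in the converse, which is exactly where linearity of $\phi$ and the closure of the ideal $\mathfrak A(\mathsf U)$ under taking positive and imaginary parts are needed.
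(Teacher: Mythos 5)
Your proof is correct and is exactly the argument the paper leaves implicit: the paper states this as an Observation without proof, and the intended justification is precisely the combination of $\mathsf U_a$-fullness from the preceding proposition (for the forward direction and for positive elements in the converse) together with linearity of $\phi$ and the decomposition of a general element of the $\ast$-closed ideal $\mathfrak A(\mathsf U)$ into four positive elements. Nothing is missing; in particular you correctly identified that the reduction to positive elements is the only point needing any care.
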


When $\mathfrak B$ is an $\mathsf X$-$C^\ast$-algebra and $\mathfrak D$ is any $C^\ast$-algebra, then the spatial tensor product $\mathfrak B \otimes \mathfrak D$ is canonically an $\mathsf X$-$C^\ast$-algebra by the action
$\mathsf U \mapsto \mathfrak B(\mathsf U) \otimes \mathfrak D$.

\begin{lemma}\label{l:actionstensorD}
Let $\mathfrak B$ be an $\mathsf X$-$C^\ast$-algebra and $\mathfrak D$ be a $C^\ast$-algebra. Whenever $\mathfrak B$ is monotone (resp.~finitely) upper semicontinuous, then so is $\mathfrak B \otimes \mathfrak D$.

Suppose, moreover, that $\mathfrak B$ or $\mathfrak D$ is exact. If $\mathfrak B$ is (finitely) lower semicontinuous, then so is $\mathfrak B \otimes \mathfrak D$.
\end{lemma}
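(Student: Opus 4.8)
The plan is to work directly with the canonical action $\mathsf U \mapsto \mathfrak B(\mathsf U) \otimes \mathfrak D$ and to reduce each continuity property to a statement about how the ideals $\mathfrak B(\mathsf U) \otimes \mathfrak D$ sit inside $\mathfrak B \otimes \mathfrak D$. Throughout I would use three elementary facts: for a closed ideal $\mathfrak I \subseteq \mathfrak B$ the subspace $\mathfrak I \otimes \mathfrak D$ is a closed ideal of $\mathfrak B \otimes \mathfrak D$; the algebraic span of the elementary tensors $x \otimes y$ with $x \in \mathfrak I$ is dense in $\mathfrak I \otimes \mathfrak D$; and, for $y \neq 0$, an elementary tensor $x \otimes y$ lies in $\mathfrak I \otimes \mathfrak D$ if and only if $x \in \mathfrak I$ (apply the slice map $\mathrm{id}_{\mathfrak B} \otimes \psi$ for a functional $\psi \in \mathfrak D^\ast$ with $\psi(y) = 1$).

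For the upper semicontinuity assertions no exactness is needed. If $\mathfrak B$ is monotone upper semicontinuous and $(\mathsf U_\lambda)$ is an increasing net with union $\mathsf U$, then $\overline{\bigcup_\lambda \mathfrak B(\mathsf U_\lambda)} = \mathfrak B(\mathsf U)$, and I would deduce $\overline{\bigcup_\lambda (\mathfrak B(\mathsf U_\lambda) \otimes \mathfrak D)} = \mathfrak B(\mathsf U) \otimes \mathfrak D$ by approximating an elementary tensor $x \otimes y$ with $x \in \mathfrak B(\mathsf U)$ by $x_\lambda \otimes y$ with $x_\lambda \in \mathfrak B(\mathsf U_\lambda)$, and then using density. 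For finite upper semicontinuity, $\mathfrak B(\emptyset) \otimes \mathfrak D = 0$ is immediate, and since the sum of two closed ideals in a $C^\ast$-algebra is again closed, a density argument on elementary tensors gives $(\mathfrak B(\mathsf U) \otimes \mathfrak D) + (\mathfrak B(\mathsf V) \otimes \mathfrak D) = (\mathfrak B(\mathsf U) + \mathfrak B(\mathsf V)) \otimes \mathfrak D = \mathfrak B(\mathsf U \cup \mathsf V) \otimes \mathfrak D$.

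The lower semicontinuity assertions are where exactness is essential, and this is the main obstacle. The condition $\mathfrak B(\mathsf X) = \mathfrak B$ transfers to $\mathfrak B(\mathsf X) \otimes \mathfrak D = \mathfrak B \otimes \mathfrak D$ at once, so everything reduces to the intersection formula $\bigcap_\lambda (\mathfrak I_\lambda \otimes \mathfrak D) = (\bigcap_\lambda \mathfrak I_\lambda) \otimes \mathfrak D$ for ideals $\mathfrak I_\lambda = \mathfrak B(\mathsf U_\lambda)$; taking finite families covers the finitely lower semicontinuous case and arbitrary families the lower semicontinuous case (using lower semicontinuity of $\mathfrak B$ to identify $\bigcap_\lambda \mathfrak I_\lambda$ with $\mathfrak B$ evaluated on the interior of $\bigcap_\lambda \mathsf U_\lambda$). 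The inclusion $\supseteq$ is trivial. For $\subseteq$, note that $\mathfrak J := \bigcap_\lambda (\mathfrak I_\lambda \otimes \mathfrak D)$ is a closed ideal of $\mathfrak B \otimes \mathfrak D$; since $\mathfrak B$ or $\mathfrak D$ is exact, \cite[Corollary 9.4.6]{BrownOzawa-book-approx} (applied to $\mathfrak B \otimes \mathfrak D$, or to $\mathfrak D \otimes \mathfrak B$ via the symmetry of the spatial tensor product when it is $\mathfrak D$ that is exact) shows that $\mathfrak J$ is the closed linear span of the elementary tensors it contains. Any such elementary tensor $x \otimes y$ with $y \neq 0$ lies in $\mathfrak I_\lambda \otimes \mathfrak D$ for every $\lambda$, so by the slice-map fact $x \in \mathfrak I_\lambda$ for all $\lambda$, whence $x \in \bigcap_\lambda \mathfrak I_\lambda$ and $x \otimes y \in (\bigcap_\lambda \mathfrak I_\lambda) \otimes \mathfrak D$. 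Passing to closed spans yields $\mathfrak J \subseteq (\bigcap_\lambda \mathfrak I_\lambda) \otimes \mathfrak D$, which finishes the argument.

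I expect the only genuinely delicate point to be this last use of exactness. Without it an ideal of $\mathfrak B \otimes \mathfrak D$ need not be spanned by the elementary tensors it contains, and the intersection formula can fail; this is precisely why the hypothesis that $\mathfrak B$ or $\mathfrak D$ be exact enters only in the lower semicontinuous half of the statement, while the upper semicontinuous half holds unconditionally.
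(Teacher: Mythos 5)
Your proof is correct and takes essentially the same route as the paper's: elementary density arguments handle both upper semicontinuity statements, and the (finite) lower semicontinuity is reduced to the intersection formula $\bigcap_\lambda (\mathfrak J_\lambda \otimes \mathfrak D) = (\bigcap_\lambda \mathfrak J_\lambda) \otimes \mathfrak D$, established via \cite[Corollary 9.4.6]{BrownOzawa-book-approx}. The only difference is that you fill in details the paper leaves implicit, namely the slice-map argument showing that an elementary tensor $b \otimes d$ with $d \neq 0$ lying in $\mathfrak J_\lambda \otimes \mathfrak D$ forces $b \in \mathfrak J_\lambda$, and the appeal to symmetry of the spatial tensor product when it is $\mathfrak D$ rather than $\mathfrak B$ that is exact.
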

\begin{proof}
Monotone upper semicontinuity: this is clearly preserved when tensoring with $\mathfrak D$. 

Finite upper semicontinuity: Clearly $(\mathfrak B \otimes \mathfrak D)(\emptyset) = 0$.
If $\mathfrak I$ and $\mathfrak J$ are two-sided, closed ideals in $\mathfrak B$, then $(\mathfrak I + \mathfrak J) \otimes \mathfrak D$ is the closed linear span of elementary tensors. 
Since any element in $\mathfrak I +\mathfrak J$ can be written as $x+y$ with $x\in \mathfrak I$ and $y \in \mathfrak J$ it easily follows that 
$(\mathfrak I + \mathfrak J)\otimes \mathfrak D = \mathfrak I \otimes \mathfrak D + \mathfrak J \otimes \mathfrak D$. Thus finite upper semicontinuity is preserved when tensoring with $\mathfrak D$.

(Finite) lower semicontinuity:  Clearly $(\mathfrak B \otimes \mathfrak D)(\mathsf X) = \mathfrak B \otimes \mathfrak D$. Let $(\mathfrak J_\lambda)$ be a family of two-sided, closed ideals in $\mathfrak B$, let $\mathfrak J = \bigcap \mathfrak J_\lambda$, and let $\mathfrak I = \bigcap (\mathfrak J_\lambda \otimes \mathfrak D)$. 
Clearly $\mathfrak J \otimes \mathfrak D \subseteq \mathfrak I$. 
By \cite[Corollary 9.4.6]{BrownOzawa-book-approx}, $\mathfrak I$ is the closed linear span of all elementary tensors $b\otimes d$ with $b\in \mathfrak B$, $d\in \mathfrak D$ and $b\otimes d \in \mathfrak I$. 
For such $b,d$ it easily follows that $b\in \mathfrak J$, so $\mathfrak I = \mathfrak J \otimes \mathfrak D$. It clearly follows that (finite) lower semicontinuity is preserved when tensoring with $\mathfrak D$.
\end{proof}


\subsection{Selection results}

We will be applying a variation of one of the remarkable selection theorems of Michael \cite{Michael-selection}. To do this we need some notation. Let $\mathsf Y$ and $\mathsf Z$ be topological spaces. 
A \emph{carrier} from $\mathsf Y$ to $\mathsf Z$ is a map $\Gamma \colon \mathsf Y \to 2^{\mathsf Z}$, where $2^{\mathsf Z}$ is the set of non-empty subsets of $\mathsf Z$.
We say that $\Gamma$ is \emph{lower semicontinuous} if for every open subset $\mathsf U$ of $\mathsf Z$, the set
\[
 \{ y \in \mathsf Y : \Gamma(y) \cap \mathsf U \neq \emptyset\}
\]
is open in $\mathsf Y$. 
One of Michael's selection theorems \cite[Theorem 1.2]{Michael-aselectionthm} implies that if $\mathsf Y$ is a paracompact $T_1$-space (e.g.~a second countable, locally compact Hausdorff space), 
if $(\mathsf Z^\ast)_1$ is the unit ball of the dual space of a separable Banach space $\mathsf Z$, and if $\Gamma$ is a lower semicontinuous carrier from $\mathsf Y$ to $(\mathsf Z^\ast)_1$
such that $\Gamma(y)$ is a weak$^\ast$-closed convex set in $(\mathsf Z^\ast)_1$ for all $y \in \mathsf Y$, then there exists a continuous map $\gamma \colon \mathsf Y \to (\mathsf Z^\ast)_1$ such that $\gamma(y) \in \Gamma(y)$ for all $y$.

When $\mathsf Y$ is a locally compact Hausdorff space, $y\in \mathsf Y$ we let $\mathrm{ev}_y \colon C_0(\mathsf Y) \to \mathbb C$ denote the $\ast$-homomorphism which is evaluation in $y$.

We will use the following ideal related selection result. A very similar result can be found in the preprint \cite[Lemma A.15]{HarnischKirchberg-primitive}.\footnote{In \cite[Lemma A.15]{HarnischKirchberg-primitive} they assume that
$\mathfrak A$ is separable and $\mathsf Y$ is any locally compact Hausdorff space. In their proof they use an unspecified selection theorem of Michael from \cite{Michael-selection}. 
The selection theorem with weakest preliminary conditions in \cite{Michael-selection} requires $\mathsf Y$ to be normal. However, there are examples of locally compact Hausdorff spaces (not second countable) which are not normal.
Thus the proof of \cite[Lemma A.15]{HarnischKirchberg-primitive} requires more arguments than are given, if one should apply the selection theorems of Michael.}

\begin{lemma}\label{l:cpselection}
 Let $\mathsf Y$ be a second countable, locally compact Hausdorff space, and let $\mathfrak A$ be a lower semicontinuous $\mathsf Y$-$C^\ast$-algebra.
 For any distinct points $y_1,\dots,y_n\in\mathsf Y$, and any quasi-states $\eta_k$ on $\mathfrak A/\mathfrak A(\mathsf Y \setminus \{ y_k\})$, there is a contractive $\mathsf Y$-equivariant c.p.~map $\phi \colon \mathfrak A \to C_0(\mathsf Y)$,
 such that
 \[
  \mathrm{ev}_{y_k}(\phi(a)) = \eta_k(a + \mathfrak A(\mathsf Y\setminus \{ y_k\}))
 \]
 for all $a\in \mathfrak A$.
\end{lemma}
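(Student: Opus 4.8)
\begin{skproof}
The plan is to recast the desired map as a continuous field of quasi-states over $\mathsf Y$, and to produce this field by one of Michael's selection theorems. Identifying a c.p.\ map $\phi\colon \mathfrak A\to C_0(\mathsf Y)$ with the family $(\phi_y)_{y\in\mathsf Y}$ of functionals $\phi_y(a):=\phi(a)(y)$, the requirements translate as follows: $\phi$ is positive and contractive precisely when each $\phi_y$ is a quasi-state on $\mathfrak A$; $\phi(a)\in C_0(\mathsf Y)$ for every $a$ precisely when $y\mapsto \phi_y(a)$ is continuous and vanishes at infinity; and, since $\mathfrak A$ is lower semicontinuous, $\phi$ is $\mathsf Y$-equivariant precisely when each $\phi_y$ annihilates $\mathfrak A(\mathsf Y\setminus\{y\})$, i.e.\ factors through a quasi-state on the fibre $\mathfrak A_y:=\mathfrak A/\mathfrak A(\mathsf Y\setminus\{y\})$ (here $\mathsf Y\setminus\{y\}$ is open as $\mathsf Y$ is $T_1$). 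Finally, the prescribed condition is exactly $\phi_{y_k}=\eta_k\circ\pi_{y_k}$, where $\pi_{y_k}\colon\mathfrak A\to\mathfrak A_{y_k}$ is the quotient map. As a positive map into the commutative algebra $C_0(\mathsf Y)$ is automatically completely positive, producing such a field $(\phi_y)$ finishes the proof.

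First I would pass to the one-point compactification $\mathsf Y^+=\mathsf Y\cup\{\infty\}$, which is compact and metrizable (as $\mathsf Y$ is second countable, locally compact and Hausdorff), hence a paracompact $T_1$-space as required by Michael's theorem. On $\mathsf Y^+$ I define a carrier $\Gamma$ into the dual $\mathfrak A^\ast$ equipped with the weak-$\ast$ topology by
\[
\Gamma(y_k)=\{\eta_k\circ\pi_{y_k}\},\qquad \Gamma(\infty)=\{0\},\qquad \Gamma(y)=\{\text{quasi-states on }\mathfrak A\text{ vanishing on }\mathfrak A(\mathsf Y\setminus\{y\})\}
\]
for the remaining $y$. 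Each $\Gamma(y)$ is nonempty (it contains $0$), convex and weak-$\ast$ compact, and the prescribed value $\eta_k\circ\pi_{y_k}$ is itself a quasi-state annihilating $\mathfrak A(\mathsf Y\setminus\{y_k\})$, so it lies in the ``generic'' set at $y_k$. Granting that $\Gamma$ is lower semicontinuous, Michael's selection theorem yields a weak-$\ast$ continuous selection $\gamma\colon\mathsf Y^+\to\mathfrak A^\ast$ with $\gamma(y)\in\Gamma(y)$; setting $\phi(a)(y):=\gamma(y)(a)$ for $y\in\mathsf Y$ then gives a map with all the required properties, the vanishing at infinity coming from $\gamma(\infty)=0$ together with continuity.

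The main obstacle is verifying that $\Gamma$ is lower semicontinuous, and this is exactly where lower semicontinuity of the action enters. Since the $y_k$ are distinct and $\mathsf Y^+$ is Hausdorff, near any point only one of the finitely many special fibres is relevant, and at $\infty$ the point $0$ lies in every $\Gamma(y)$; so lower semicontinuity reduces to the following transport statement: given $y_0\in\mathsf Y$, a quasi-state $\omega\in\Gamma(y_0)$, finitely many $a_1,\dots,a_m\in\mathfrak A$ and $\epsilon>0$, there is a neighbourhood $\mathsf V$ of $y_0$ so that for every $y\in\mathsf V$ some quasi-state annihilating $\mathfrak A(\mathsf Y\setminus\{y\})$ approximates $\omega$ to within $\epsilon$ on $a_1,\dots,a_m$. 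The key input is that lower semicontinuity of the action is equivalent to lower semicontinuity of each norm function $y\mapsto\|a+\mathfrak A(\mathsf Y\setminus\{y\})\|=\|\pi_y(a)\|$, so the fibre norms of the $a_i$ do not collapse as $y\to y_0$. One then interpolates the values $\omega(a_i)$ by a functional on the finite-dimensional operator system spanned by the $\pi_y(a_i)$ inside $\mathfrak A_y$---which is contractive for $y$ close to $y_0$ precisely because these norms are controlled from below---and extends it to a quasi-state on $\mathfrak A_y$ by Arveson's extension theorem. The remaining checks (that $\phi$ is linear, contractive, $\mathsf Y$-equivariant, and restricts to $\eta_k$ at $y_k$) are then immediate from the corresponding properties of the field $(\gamma(y))$.
\end{skproof}
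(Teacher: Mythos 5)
Your overall architecture is the same as the paper's: identify the desired map with a field of quasi-states annihilating the fibre ideals $\mathfrak A(\mathsf Y\setminus\{y\})$, encode the constraints as a convex-valued carrier that is a singleton at the prescribed points, and produce the field by Michael's selection theorem; your one-point-compactification device for vanishing at infinity is a harmless variant of the paper's trick of selecting into $C_b(\mathsf Y)$ and then multiplying by a cutoff $f\in C_0(\mathsf Y)$ with $f(y_k)=1$, and your pointwise reformulation of $\mathsf Y$-equivariance is exactly the paper's closing argument. The genuine gap is in the one step that carries all the weight, namely lower semicontinuity of the carrier --- your ``transport statement''.

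The interpolate-then-extend argument you propose for it fails, because the functional you want to interpolate with generally does not exist. Concretely, take $\mathfrak A=C_0(\mathbb R)$ with its tight action (which is lower semicontinuous), $y_0=1$, $\omega=\mathrm{ev}_1$, and $a_1,a_2\in C_0(\mathbb R)$ with $a_1\equiv 1$ and $a_2(t)=t$ on a neighbourhood of $1$. For every $y\neq 1$ in that neighbourhood, $a_1-y^{-1}a_2$ vanishes at $y$, so $\pi_y(a_1-y^{-1}a_2)=0$, while $\omega(a_1-y^{-1}a_2)=1-y^{-1}\neq 0$; hence \emph{no} linear functional on $\mathrm{span}\{\pi_y(a_1),\pi_y(a_2)\}$ sends $\pi_y(a_i)$ to $\omega(a_i)$. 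The point is that lower semicontinuity of $y\mapsto\|\pi_y(a)\|$ (your key input, which is correct) prevents the norms of \emph{individual} elements from collapsing, but does not control the kernel of $\pi_y$ restricted to a fixed finite-dimensional subspace, which can rotate as $y$ moves; so only \emph{approximate} interpolation can hold. Once you settle for an approximate interpolant, ``extend by Arveson'' is no longer available: Arveson--Krein extension takes a \emph{positive} map on an operator system as input, and nothing in your sketch produces positivity --- yet positivity is indispensable, since the carrier must consist of quasi-states for the selected $\phi$ to be positive. This is precisely the difficulty the paper's proof is built to avoid: it first proves lower semicontinuity for the carrier of \emph{pure} states annihilating $\mathfrak A(\mathsf Y\setminus\{y\})$, using that $\eta\mapsto\ker\pi_\eta$, from $P(\mathfrak A^\dagger)$ onto $\Prim\mathfrak A^\dagger$, is continuous and open \cite[Theorem 4.3.3]{Pedersen-book-automorphism}, combined with the map $\mathbb I(\mathfrak A^\dagger)\to\mathbb O(\mathsf Y)$ that lower semicontinuity of the action provides; it then passes to quasi-states because closures and convex hulls of carriers preserve lower semicontinuity \cite[Propositions 2.3 and 2.6]{Michael-selection}, so positivity is never an issue --- one manipulates genuine states throughout. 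Your transport statement is true, and your instinct that norm lower semicontinuity suffices can in fact be carried through: for self-adjoint $b$ in $\mathfrak A^\dagger$ one has $\max\spec(\pi_y(b))=\|\pi_y(b+\|b\|1)\|-\|b\|$ in $\mathfrak A^\dagger/\mathfrak A(\mathsf Y\setminus\{y\})$, so the support functions of the fibrewise state spaces are lower semicontinuous in $y$, and a Hahn--Banach separation plus compactness of the coefficient sphere then yields the statement; but that is a different argument from the one you wrote, and as it stands your crucial step is unproved. A final caveat, which your write-up makes explicit and the paper is also terse about: Michael's Theorem~$3.2''$ concerns carriers into Banach spaces, so invoking it for $(\mathfrak A^\ast,\text{weak-}\ast)$ needs justification, e.g.\ an affine embedding of the relevant weak-$\ast$ compact convex sets into a Banach space, which uses a separability hypothesis on $\mathfrak A$ that the lemma does not make.
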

\begin{proof}
 First note that the forced unitisation $\mathfrak A^\dagger$ has a canonical lower semicontinuous action of $\mathsf Y$ given by $\mathfrak A^{\dagger}(\mathsf V) = \mathfrak A(\mathsf V)$ when 
 $\mathsf V \neq \mathsf Y$ and $\mathfrak A^\dagger(\mathsf Y) = \mathfrak A^\dagger$.
 Let $P(\mathfrak A^\dagger)$ be the space of pure states on $\mathfrak A^\dagger$.
 Let $\Gamma \colon \mathsf Y \to 2^{P(\mathfrak A^\dagger)}$ be the carrier given by 
 \[
  \Gamma(y) = \{ \eta \in P(\mathfrak A^\dagger) : \eta(\mathfrak A^\dagger(\mathsf Y \setminus\{y\})) = 0 \} = \{ \eta \in P(\mathfrak A^\dagger) : \eta(\mathfrak A(\mathsf Y \setminus\{y\})) =  0  \}.
 \]
 We claim that $\Gamma$ is lower semicontinuous. To see this, first recall (e.g.~\cite[Theorem 4.3.3]{Pedersen-book-automorphism}) that the continuous map $P(\mathfrak A^\dagger) \to \Prim \mathfrak A^\dagger$ given by $\eta \mapsto \ker \pi_\eta$,
 where $\pi_\eta$ is the GNS representation, is an open map. Thus this induces a map $\mathbb O(P(\mathfrak A^\dagger)) \to \mathbb O(\Prim \mathfrak A^\dagger) \cong \mathbb I(\mathfrak A^\dagger)$.
 We may construct a map $\mathbb I(\mathfrak A^\dagger) \to \mathbb O(\mathsf Y)$, by
 \[
  \mathfrak J \mapsto ( \bigcap_{\mathsf V \in \mathbb O(\mathsf Y), \mathfrak J \subseteq \mathfrak A^{\dagger}(\mathsf V)} \mathsf V)^\circ.
 \]
 Since $\mathfrak A^\dagger$ is lower semicontinuous, $\mathfrak J$ is mapped to the unique smallest open subset $\mathsf V$ of $\mathsf Y$ for which $\mathfrak J \subseteq \mathfrak A^{\dagger}(\mathsf V)$. Let $\Phi$ denote the composition
 \[
  \mathbb O(P(\mathfrak A^\dagger)) \to \mathbb O(\Prim \mathfrak A^\dagger) \cong \mathbb I(\mathfrak A^\dagger) \to \mathbb O(\mathsf Y).
 \]
 The map $\Phi$ can be described as follows: Let $\mathsf U\in \mathbb O(P(\mathfrak A^\dagger))$. 
 Then there is a unique two-sided, closed ideal $\mathfrak J_\mathsf{U}$ in $\mathfrak A^{\dagger}$, such that
 \[
  \{ \ker \pi_\eta : \eta \in \mathsf U\} = \{ \mathfrak p \in \Prim \mathfrak A^{\dagger} : \mathfrak J_\mathsf{U} \not \subseteq \mathfrak p\},
 \]
 and $\Phi(\mathsf U)$ is the unique smallest open subset of $\mathsf Y$ such that $\mathfrak J_\mathsf{U} \subseteq \mathfrak A^{\dagger}(\Phi(\mathsf U))$.
 
 We claim that for any $\mathsf U\in \mathbb O(P(\mathfrak A^\dagger))$ we have
 \[
  \{ y \in \mathsf Y : \Gamma(y) \cap \mathsf U \neq \emptyset \} = \Phi(\mathsf U)
 \]
 and thus $\Gamma$ is a lower semicontinuous carrier. That this is true follows from the following:
 \begin{eqnarray*}
  \Gamma(y) \cap \mathsf U = \emptyset &\Leftrightarrow& \text{for every } \eta \in \mathsf U \text{ we have } \mathfrak A^\dagger(\mathsf Y \setminus \{y\}) \not \subseteq \ker \pi_{\eta} \\
  &\Leftrightarrow& \{ \mathfrak p \in \Prim \mathfrak A^\dagger : \mathfrak J_\mathsf{U} \not \subseteq \mathfrak p \} \subseteq \{ \mathfrak p \in \Prim \mathfrak A^\dagger : \mathfrak A^\dagger(\mathsf Y\setminus \{y\}) \not \subseteq \mathfrak p \} \\
  &\Leftrightarrow& \mathfrak J_\mathsf{U} \subseteq \mathfrak A^\dagger(\mathsf Y \setminus\{ y \}) \\
  &\Leftrightarrow& \Phi(\mathsf U) \subseteq \mathsf Y \setminus \{y\} \\
  &\Leftrightarrow& y \notin \Phi(\mathsf U).
 \end{eqnarray*}
 
 Let $\mathscr Q(\mathfrak A) \subseteq \mathfrak A^\ast$ denote the quasi-state space of $\mathfrak A$ and
 \[
  K_y := \{ \eta \in \mathscr Q(\mathfrak A) : \eta(\mathfrak A(\mathsf Y \setminus \{y\})) = 0\}
 \]
 for every $y\in \mathsf Y$. Recall, that the restriction map $(\mathfrak A^\dagger)^\ast \to \mathfrak A^\ast$ induces a homeomorphism $P(\mathfrak A^\dagger) \to \{ 0 \} \cup P(\mathfrak A)$. 
 Moreover, under this identification, the closed convex hull of $\Gamma(y)$ is exactly $K_y$.
 Thus it follows from \cite[Propositions 2.3 and 2.6]{Michael-selection} that the carrier $\Gamma_1 \colon \mathsf Y \to 2^{(\mathfrak A^\ast)_1}$ given by $\Gamma_1(y) = K_y$, is lower semicontinuous.
 
 Let $\mathsf A = \{ y_1, \dots, y_n\} \subseteq \mathsf Y$ which is (obviously) a closed subspace, and let $\pi_y \colon \mathfrak A \to \mathfrak A/\mathfrak A(\mathsf Y\setminus \{y\})$ be the quotient map for each $y\in \mathsf Y$. 
 The map $g_0 \colon \mathsf A \to \mathscr Q(\mathfrak A)$ given by $g_0(y_k) = \eta_k \circ \pi_{y_k}$ is clearly continuous and $g_0(y_k) \in K_{y_k}= \Gamma_1(y_k)$.
 Thus it follows from \cite[Example 1.3*]{Michael-selection} that the carrier $\Gamma_2 \colon \mathsf Y \to 2^{(\mathfrak A^\ast)_1}$ given by
 \[
  \Gamma_2(y) = \left\{ \begin{array}{ll}
                         \{ g_0(y) \}, & \text{ if } y \in \mathsf A\\
                         K_y, & \text{ otherwise}
                        \end{array}
                \right.
 \]
 is lower semicontinuous. Since $\mathsf Y$ is a paracompact $T_1$-space, and $\Gamma_2(y)$ is a closed convex space for every $y\in \mathsf Y$, 
 it follows from \cite[Theorem 1.2]{Michael-aselectionthm} that there exists a continuous map $g \colon \mathsf Y \to (\mathfrak A^\ast)_1$, such that $g(y) \in \Gamma_2(y)$ for all $y\in \mathsf Y$.
 
 Now, let $\hat \phi \colon \mathfrak A \to C_b(\mathsf Y)$ be given by $\mathrm{ev}_y \circ \hat \phi(a) = g(y)(a)$. Since $\mathrm{ev}_y \circ \hat \phi$ is a contractive c.p.~map (a quasi-state) for every $y\in \mathsf Y$, it follows that $\hat \phi$ is a contractive c.p.~map. 
 Pick a positive contraction $f$ in $C_0(\mathsf Y)$ such that $f(y_k) = 1$ for $k=1,\dots, n$. Then $\phi \colon \mathfrak A \to C_0(\mathsf Y)$ given by $\phi(a) = f \cdot \hat \phi(a)$ is again a contractive c.p.~map.
 Moreover, we clearly have
 \[
  \mathrm{ev}_{y_k}(\phi(a)) = f(y_k) \cdot g(y_k)(a) = \eta_k(a + \mathfrak A(\mathsf Y \setminus \{ y_k\})).
 \]
 Thus it remains to show that $\phi$ is $\mathsf Y$-equivariant. 
 
 Let $\mathsf V \in \mathbb O(\mathsf Y)$ and $a\in \mathfrak A(\mathsf V)$. For every $y \notin \mathsf V$ we have $\mathsf V \subseteq \mathsf Y \setminus \{ y\}$ and thus $a\in \mathfrak A(\mathsf Y \setminus \{y\})$.
 Since $\mathrm{ev}_y \circ \phi(a) \in K_y$ it follows that $\mathrm{ev}_y \circ \phi(a) = 0$, and thus $\phi(a) \in C_0(\mathsf Y \setminus \{y\})$. Hence we have
 \[
  \phi(a) \in \bigcap_{y \notin \mathsf V} C_0(\mathsf Y \setminus \{ y\} ) = C_0(\mathsf V),
 \]
 which implies that $\phi$ is $\mathsf Y$-equivariant.
\end{proof}

The above lemma lets us prove the following selection result for $\mathsf X$-equivariant maps.
Recall, that when $\mathfrak A$ is a lower semicontinuous $\mathsf X$-$C^\ast$-algebra and $a\in \mathfrak A$, then $\mathsf U_a$ denotes the unique smallest open subset of $\mathsf X$ for which $a\in \mathfrak A(\mathsf U_a)$.

\begin{proposition}\label{p:fromAtoC}
Let $\mathfrak A$ be a lower semicontinuous $\mathsf X$-$C^\ast$-algebra and let $\mathfrak C$ be a separable, commutative, upper semicontinuous $\mathsf X$-$C^\ast$-algebra. For every positive $a\in \mathfrak A$ there exists an $\mathsf X$-equivariant c.p.~map $\phi \colon \mathfrak A \to \mathfrak C$ such that $\phi(a)$ is strictly positive in $\mathfrak C(\mathsf U_a)$.
\end{proposition}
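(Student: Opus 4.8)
The plan is to reduce to the concrete situation where the target is an algebra of continuous functions and then to invoke the selection result Lemma \ref{l:cpselection}. Since $\mathfrak C$ is separable and commutative, write $\mathfrak C = C_0(\mathsf Y)$ for a second countable, locally compact Hausdorff space $\mathsf Y$; its ideals are precisely the $C_0(\mathsf W)$ with $\mathsf W \in \mathbb O(\mathsf Y)$, so the action of $\mathsf X$ on $\mathfrak C$ is encoded by an order preserving map $\alpha \colon \mathbb O(\mathsf X) \to \mathbb O(\mathsf Y)$ determined by $\mathfrak C(\mathsf U) = C_0(\alpha(\mathsf U))$.

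First I would exploit upper semicontinuity of $\mathfrak C$. Finite and monotone upper semicontinuity together say that $\alpha$ turns finite unions into unions and monotone unions into unions; writing an arbitrary union as the directed supremum of its finite subunions then shows that $\alpha$ preserves arbitrary suprema. Hence $\alpha$ admits a right adjoint, which is exactly the map $\Psi$ from the discussion following Definition \ref{d:actions} (transported along $\mathbb I(\mathfrak C) \cong \mathbb O(\mathsf Y)$): one has the Galois relation $\alpha(\mathsf U) \subseteq \mathsf W \iff \mathsf U \subseteq \Psi(\mathsf W)$. I then equip $\mathfrak A$ with a second action, now of $\mathsf Y$, by setting $\mathfrak A_{\mathsf Y}(\mathsf W) := \mathfrak A(\Psi(\mathsf W))$ for $\mathsf W \in \mathbb O(\mathsf Y)$. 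Using that $\Psi$, being a right adjoint, preserves arbitrary infima and that $\mathfrak A$ is lower semicontinuous, this new action is again lower semicontinuous. A short computation with the Galois relation, resting on the unit $\mathfrak A(\mathsf U) \subseteq \mathfrak A_{\mathsf Y}(\alpha(\mathsf U))$ and the counit $\alpha(\Psi(\mathsf W)) \subseteq \mathsf W$, shows that a c.p.\ map $\mathfrak A \to \mathfrak C$ is $\mathsf X$-equivariant if and only if it is $\mathsf Y$-equivariant for $\mathfrak A_{\mathsf Y}$.

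Next I would locate the support of the target. Put $\mathsf V := \alpha(\mathsf U_a)$, so that $\mathfrak C(\mathsf U_a) = C_0(\mathsf V)$; every $\mathsf X$-equivariant c.p.\ map automatically sends the positive element $a \in \mathfrak A(\mathsf U_a)$ into $C_0(\mathsf V)$, so it remains only to arrange strict positivity, i.e.\ $\phi(a)(y) > 0$ for every $y \in \mathsf V$. The Galois relation together with lower semicontinuity of $\mathfrak A_{\mathsf Y}$ gives, for $y \in \mathsf Y$, that $a \in \mathfrak A_{\mathsf Y}(\mathsf Y \setminus \{y\})$ exactly when $y \notin \mathsf V$; thus for each $y \in \mathsf V$ the image of $a$ in $\mathfrak A/\mathfrak A_{\mathsf Y}(\mathsf Y \setminus \{y\})$ is a nonzero positive element, on which some state takes a positive value. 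Feeding such states into Lemma \ref{l:cpselection} produces, for a single $y \in \mathsf V$, a contractive $\mathsf Y$-equivariant (hence $\mathsf X$-equivariant) c.p.\ map $\rho$ with $\rho(a)(y) > 0$, whence $\rho(a) > 0$ on an entire neighbourhood of $y$ by continuity.

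The main obstacle is that strict positivity must hold at \emph{every} point of the possibly non-compact, uncountable set $\mathsf V$, whereas Lemma \ref{l:cpselection} controls only finitely many points at a time. To overcome this I would use that $\mathsf Y$, being second countable and locally compact, is $\sigma$-compact, so $\mathsf V = \bigcup_m K_m$ with each $K_m$ compact. Covering a fixed $K_m$ by finitely many of the positivity neighbourhoods above and averaging the corresponding maps yields a contractive $\mathsf X$-equivariant c.p.\ map $\psi_m$ with $\psi_m(a) > 0$ on all of $K_m$. Finally I would set $\phi := \sum_m 2^{-m}\psi_m$; this converges in the point-norm topology to an $\mathsf X$-equivariant c.p.\ map, because each $\psi_m$ is contractive and the cone $CP(\mathsf X; \mathfrak A, \mathfrak C)$ is point-norm closed (Remark \ref{r:Xeqcone}). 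As every term $\psi_m(a)$ is positive and each $z \in \mathsf V$ lies in some $K_{m_0}$, one obtains $\phi(a)(z) \ge 2^{-m_0}\psi_{m_0}(a)(z) > 0$, so $\phi(a)$ is strictly positive in $C_0(\mathsf V) = \mathfrak C(\mathsf U_a)$, as required.
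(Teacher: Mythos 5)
Your proposal is correct, and up to the selection step it is essentially the paper's own proof recast in Galois-connection language: the paper also writes $\mathfrak C = C_0(\mathsf Y)$ with $\mathsf Y = \Prim \mathfrak C$, defines $\Psi(\mathsf W) = \bigcup\{\mathsf U \in \mathbb O(\mathsf X) : \mathfrak C(\mathsf U) \subseteq C_0(\mathsf W)\}$ (your right adjoint), equips $\mathfrak A$ with the lower semicontinuous $\mathsf Y$-action $\mathsf W \mapsto \mathfrak A(\Psi(\mathsf W))$, proves that $\mathsf X$-equivariance into $\mathfrak C$ coincides with $\mathsf Y$-equivariance into $C_0(\mathsf Y)$, identifies $\mathfrak C(\mathsf U_a)$ with $C_0(\mathsf V_a)$ (your $\mathsf V = \alpha(\mathsf U_a)$), and then feeds norm-attaining states at points of $\mathsf V_a$ into Lemma \ref{l:cpselection}. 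The genuine difference is the endgame. The paper picks a dense sequence $(y_n)$ in $\mathsf V_a$ and takes $\psi = \sum_n 2^{-n}\psi_{y_n}$, asserting that $\mathrm{ev}_y(\psi(a)) > 0$ for \emph{every} $y \in \mathsf V_a$ is clear; but strict positivity at a dense set of points does not by itself force a positive continuous function to be strictly positive everywhere --- the open set $\bigcup_n \{\psi_{y_n}(a) > 0\}$ is dense in $\mathsf V_a$ yet could a priori be proper, since Lemma \ref{l:cpselection} gives no control of $\psi_{y_n}(a)$ away from $y_n$ beyond continuity. Your covering argument (each single-point map is strictly positive on a whole neighbourhood; exhaust $\mathsf V$ by compacta, take finite subcovers and average, then sum with weights $2^{-m}$) supplies exactly the justification this step needs; a Lindel\"of subcover of the cover $\{\rho_y(a) > 0\}$, $y \in \mathsf V$, would work just as well and let you skip the averaging. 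So your route buys rigour precisely at the one point where the paper's write-up is loose, at the cost of a slightly longer final step. Two small remarks: you should dispose of the degenerate case $\mathsf V = \emptyset$ explicitly (as the paper does --- the zero map works; otherwise your average over an empty cover is undefined), and the equivalence $a \in \mathfrak A_{\mathsf Y}(\mathsf Y\setminus\{y\}) \Leftrightarrow y \notin \mathsf V$ really rests on $\mathsf U_a$-fullness of $a$ for the original $\mathsf X$-action combined with the Galois relation, rather than on lower semicontinuity of $\mathfrak A_{\mathsf Y}$ as such.
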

\begin{proof}
Let $\mathsf Y = \Prim \mathfrak C$ such that $\mathfrak C = C_0(\mathsf Y)$. 
To avoid confusion, we will write $\mathfrak C$ when we are using the $\mathsf X$-$C^\ast$-algebra structure, and write $C_0(\mathsf Y)$ when we consider $\mathfrak C = C_0(\mathsf Y)$ with the tight $\mathsf Y$-$C^\ast$-algebra structure.
The idea of the proof, is to construct a lower semicontinuous action $\tilde \Psi$ of $\mathsf Y$ on $\mathfrak A$, such that a c.p.~map $\mathfrak A \to \mathfrak C$ is $\mathsf X$-equivariant if and only if the same map $(\mathfrak A, \tilde \Psi) \to C_0(\mathsf Y)$ is $\mathsf Y$ equivariant. When this is done we can apply Lemma \ref{l:cpselection} to construct $\mathsf X$-equivariant c.p.~maps $\mathfrak A \to \mathfrak C$.

Construct a map $\Psi \colon \mathbb O(\mathsf Y) \to \mathbb O(\mathsf X)$ given by
 \[
  \Psi(\mathsf V) = \bigcup \{ \mathsf U \in \mathbb O(\mathsf X) : \mathfrak C(\mathsf U) \subseteq C_0(\mathsf V)\}.
 \]
 Since the action of $\mathsf X$ on $\mathfrak C$ is upper semicontinuous, $\Psi(\mathsf V)$ is the \emph{unique largest} open subset of $\mathsf X$ such that $\mathfrak C(\Psi(\mathsf V)) \subseteq C_0(\mathsf V)$, 
 in the sense that $\mathfrak C(\Psi(\mathsf V)) \subseteq C_0(\mathsf V)$ and if $\mathsf U \in \mathbb O(\mathsf X)$ satisfies $\mathfrak C(\mathsf U) \subseteq C_0(\mathsf V)$ then $\mathsf U \subseteq \Psi(\mathsf V)$.
 We clearly have that $\Psi$ is order preserving and that $\Psi(\mathsf Y) = \mathsf X$. 
 We want to show that whenever $(\mathsf V_\alpha)$ is a family of open subsets of $\mathsf Y$, and $\mathsf V$ is the interior of $\bigcap \mathsf V_\alpha$, then $\Psi(\mathsf V)$ is the interior of $\bigcap \Psi(\mathsf V_\alpha)$.
 For now, we let $\mathsf W$ denote the interior of $\bigcap \Psi(\mathsf V_\alpha)$.
 
 Since $\Psi$ is order preserving we clearly have that $\Psi(\mathsf V) \subseteq \mathsf W$. For the converse inclusion we have that $\mathfrak C(\mathsf W) \subseteq \mathfrak C(\Psi(\mathsf V_\alpha)) \subseteq C_0(\mathsf V_\alpha)$ for each $\alpha$.
 Hence $\mathfrak C(\mathsf W) \subseteq \bigcap C_0(\mathsf V_\alpha) = C_0(\mathsf V)$. It follows from the definition of $\Psi$ that $\mathsf W \subseteq \Psi(\mathsf V)$, and thus we have equality.
 
 Let $\tilde \Psi\colon \mathbb O(\mathsf Y) \to \mathbb I(\mathfrak A)$ be the action of $\mathsf Y$ on $\mathfrak A$ given by $\tilde \Psi(\mathsf V) = \mathfrak A(\Psi(\mathsf V))$. 
 It follows that $\tilde \Psi(\mathsf Y) = \mathfrak A$, and since the action of $\mathsf X$ on $\mathfrak A$ is lower semicontinuous, so is the action $\tilde \Psi$, by what we have proven above. 
 Thus $(\mathfrak A, \tilde \Psi)$ is a lower semicontinuous $\mathsf Y$-$C^\ast$-algebra.
 
 We will prove that $CP(\mathsf X; \mathfrak A, \mathfrak C) = CP(\mathsf Y; (\mathfrak A, \tilde \Psi), C_0(\mathsf Y))$. To see this, first note that $\mathfrak C(\Psi(\mathsf V)) \subseteq C_0(\mathsf V)$ for all $\mathsf V \in \mathbb O(\mathsf Y)$.
Thus if $\phi$ is $\mathsf X$-equivariant then
\[
 \phi ( \tilde \Psi(\mathsf V)) = \phi(\mathfrak A(\Psi(\mathsf V))) \subseteq \mathfrak C(\Psi(\mathsf V)) \subseteq C_0(\mathsf V).
\]
and thus $\phi$ is $\mathsf Y$-equivariant.
For $\mathsf U \in \mathbb O(\mathsf X)$ let $\mathsf V^{\mathsf U}\in \mathbb O(\mathsf Y)$ be such that $\mathfrak C(\mathsf U) = C_0(\mathsf V^\mathsf{U})$. Since $\Psi(\mathsf V^\mathsf{U})$ is the unique largest open subset of $\mathsf X$ such that $\mathfrak C(\Psi(\mathsf V^{\mathsf U})) \subseteq C_0(\mathsf V^\mathsf{U}) = \mathfrak C(\mathsf U)$ it follows that $\mathsf U \subseteq \Psi(\mathsf V^{\mathsf U})$.
Thus if $\psi$ is $\mathsf Y$-equivariant then
\[
 \psi(\mathfrak A(\mathsf U)) \subseteq \psi(\mathfrak A(\Psi(\mathsf V^{\mathsf U}))) = \psi(\tilde \Psi(\mathsf V^{\mathsf U})) \subseteq C_0(\mathsf V^{\mathsf U}) = \mathfrak C(\mathsf U).
\]
Hence it follows that $CP(\mathsf X;\mathfrak A, \mathfrak C) = CP(\mathsf Y; (\mathfrak A, \tilde \Psi), C_0(\mathsf Y))$.

Fix a positive $a\in \mathfrak A$.
Recall that $\mathsf U_a$ is the open subset of $\mathsf X$ such that $a$ is $\mathsf U_a$-full, when considering $\mathfrak A$ with the $\mathsf X$-$C^\ast$-algebra structure. 
Since $(\mathfrak A, \tilde \Psi)$ is a lower semicontinuous $\mathsf Y$-$C^\ast$-algebra, we may find a unique open subset $\mathsf V_a$ of $\mathsf Y$ such that $a$ is $\mathsf V_a$-full when considered with the $\mathsf Y$-$C^\ast$-algebra structure.
We will show that $\mathfrak C(\mathsf U_a) = C_0(\mathsf V_a)$.

Since $a \in \tilde \Psi(\mathsf V_a) = \mathfrak A(\Psi(\mathsf V_a))$ it follows from $\mathsf U_a$-fullness that $\mathsf U_a \subseteq \Psi(\mathsf V_a)$ and thus $\mathfrak C(\mathsf U_a) \subseteq \mathfrak C(\Psi(\mathsf V_a)) \subseteq C_0(\mathsf V_a)$.
Let $\mathsf W \in \mathbb O(\mathsf Y)$ be such that $C_0(\mathsf W) = \mathfrak C(\mathsf U_a)$. Then $\mathsf U_a \subseteq \Psi(\mathsf W)$ by the definition of $\Psi$.
This implies that $a \in \mathfrak A(\mathsf U_a) \subseteq \mathfrak A(\Psi(\mathsf W)) = \tilde \Psi(\mathsf W)$. By $\mathsf V_a$-fullness it follows that $\mathsf V_a \subseteq \mathsf W$ and thus $C_0(\mathsf V_a) \subseteq C_0(\mathsf W) = \mathfrak C(\mathsf U_a)$.
This shows that $\mathfrak C(\mathsf U_a) = C_0(\mathsf V_a)$.

Our goal is to construct an $\mathsf X$-equivariant c.p.~map $\psi \colon \mathfrak A \to \mathfrak C$ such that $\psi(a)$ is strictly positive in $\mathfrak C(\mathsf U_a)$. 
Equivalently, by what we have shown above, we should construct a $\mathsf Y$-equivariant c.p.~map $\psi \colon (\mathfrak A, \tilde \Psi) \to C_0(\mathsf Y)$ such that $\psi(a)$ is strictly positive in $C_0(\mathsf V_a)$.

Suppose that $\mathsf V_a = \emptyset$. Then $C_0(\mathsf V_a) = 0$, and thus letting $\psi$ be the zero map will suffice. Thus suppose that $\mathsf V_a \neq \emptyset$.
For each $y\in \mathsf V_a$ we have that $\|a  + \mathfrak A(\mathsf Y \setminus \{ y\})\| > 0$. In fact, if $a \in \mathfrak A(\mathsf Y \setminus \{y\})$ then we would have
\[
 a \in \mathfrak A(\mathsf V_a) \cap \mathfrak A(\mathsf Y\setminus \{ y\}) = \mathfrak A(\mathsf V_a \setminus \{y\})
\]
which contradicts that $a$ is $\mathsf V_a$-full. Let $\eta_y$ be a state on $\mathfrak A/\mathfrak A(\mathsf Y \setminus \{ y \})$ such that $\eta_y(a + \mathfrak A(\mathsf Y \setminus \{ y\})) = \| a + \mathfrak A(\mathsf Y \setminus \{ y\})\|$.
By Lemma \ref{l:cpselection} there is a contractive $\mathsf Y$-equivariant c.p.~map $\psi_y \colon (\mathfrak A, \tilde \Psi) \to C_0(\mathsf Y)$ such that $\mathrm{ev}_y \circ \psi_y(a) = \eta_y(a + \mathfrak A(\mathsf Y \setminus \{ y\})) > 0$. Let $\mathsf W_y \subseteq \mathsf V_a$ be an open neighbourhood of $y$ such that $\mathrm{ev}_z \circ \psi_y(a) >0$ for all $z\in \mathsf W_y$. Then $(\mathsf W_y)_{y\in \mathsf V_a}$ is an open cover of $\mathsf V_a$. 
Since $\mathsf Y$ is second countable (as $\mathfrak C$ is separable) $\mathsf V_a$ is $\sigma$-compact, so we may find a sequence $(y_n)$ of points in $\mathsf V_a$ such that $(\mathsf W_{y_n})_{n\in \mathbb N}$ covers $\mathsf V_a$.
Let $\psi = \sum_{n=1}^\infty 2^{-n} \psi_{y_n}$ which is clearly a contractive $\mathsf Y$-equivariant c.p.~map. Clearly $0 < \mathrm{ev}_y(\psi(a))$ for every $y\in \mathsf V_a$. 
Since $\psi(a) \in C_0(\mathsf V_a)$ by $\mathsf Y$-equivariance, it follows that $\psi(a)$ is strictly positive in $C_0(\mathsf V_a)$.
\end{proof}


\subsection{Property (UBS)}

\begin{definition}\label{d:propertyubs}
 Let $\mathfrak B$ be an $\mathsf X$-$C^\ast$-algebra. 
 If $\mathfrak C$ is a separable, commutative, upper semicontinuous $\mathsf X$-$C^\ast$-algebra, we will say that $\mathfrak B$ has 
\emph{Property (UBS) with respect to $\mathfrak C$} if there exists a c.p.~map $\Phi\colon \mathfrak C \to \multialg{\mathfrak B}$ such that
 $\mathfrak B(\mathsf U) = \overline{\mathfrak B \Phi(\mathfrak C(\mathsf U)) \mathfrak B}$ for all $\mathsf U \in \mathbb O(\mathsf X)$.

We will say that $\mathfrak B$ has \emph{Property (UBS)} if it has Property (UBS) with respect to $\mathfrak C$ for some separable, commutative, upper semicontinuous $\mathsf X$-$C^\ast$-algebra $\mathfrak C$.
\end{definition}

\begin{remark}\label{r:ubsfactorB}
If $\mathfrak B$ in the above definition is $\sigma$-unital, then we may assume that the c.p.~map $\Phi$ factors through $\mathfrak B$. In fact, one may simply replace $\Phi$ in the above definition with $b\Phi(-)b$ for some strictly positive element $b\in \mathfrak B$.
\end{remark}

The name (UBS) has been chosen, since these $\mathsf X$-$C^\ast$-algebras resemble the \underline{u}pper semicontinuous $C^\ast$-\underline{b}undles over a \underline{s}econd countable, locally compact Hausdorff space,
as seen in the following example.

\begin{example}
 Let $\mathsf X$ be a second countable, locally compact Hausdorff space. It was shown in \cite{Nilsen-bundles} that any upper semicontinuous $C^\ast$-bundle over $\mathsf X$, may be considered, in a natural way, as a $C_0(\mathsf X)$-algebra,
 i.e.~as a $C^\ast$-algebra $\mathfrak B$ together with an essential $\ast$-homomorphism $\Phi \colon C_0(\mathsf X) \to \mathcal Z \multialg{\mathfrak B}$, where $\mathcal Z \multialg{\mathfrak B}$ is the centre of the multiplier algebra.
 This induces an action of $\mathsf X$ on $\mathfrak B$ given by 
\[
\mathfrak B(\mathsf U) = \overline{\mathfrak B \Phi( C_0(\mathsf U))} = \overline{\mathfrak B \Phi( C_0(\mathsf U))\mathfrak B}.
\]
Thus $\mathfrak B$ with this action has Property (UBS) with respect to $C_0(\mathsf X)$.
\end{example}

\begin{example}
 Let $\mathsf X$ be a finite space, and $\mathfrak B$ be an upper semicontinuous $\mathsf X$-$C^\ast$-algebra such that $\mathfrak B(\mathsf U)$ is $\sigma$-unital for each $\mathsf U \in \mathbb O(\mathsf X)$.
 Then $\mathfrak B$ has Property (UBS). Such $\mathsf X$-$C^\ast$-algebras are considered in \cite{GabeRuiz-absrep}.
 
 To see that $\mathfrak B$ has Property (UBS), let $\mathfrak C = \bigoplus_{x\in \mathsf X} \mathbb C$ with the action of $\mathsf X$ given by $\mathfrak C(\mathsf U) = \bigoplus_{x\in \mathsf U} \mathbb C$ for $\mathsf U\in \mathbb O(\mathsf X)$.
 This is easily seen to be an upper semicontinuous $\mathsf X$-$C^\ast$-algebra.
 For $x\in \mathsf X$ let $\mathsf U^x$ be the smallest open subset of $\mathsf X$ containing $x$ and let $h_x$ be a strictly positive element in $\mathfrak B(\mathsf U^x)$.
 The c.p.~map $\Phi \colon \mathfrak C \to \mathfrak B$, which maps $1$ in the coordinate corresponding to $x$ to $h_x$, satisfies the condition in Definition \ref{d:propertyubs}.
\end{example}

The following is the reason that we are interested in Property (UBS).

\begin{proposition}\label{p:Xfullmaps}
Let $\mathfrak A$ be a lower semicontinuous $\mathsf X$-$C^\ast$-algebra and $\mathfrak B$ be a $\sigma$-unital $\mathsf X$-$C^\ast$-algebra with Property (UBS). For any positive $a\in \mathfrak A$, there exists a nuclear, $\mathsf X$-equivariant c.p.~map $\phi\colon \mathfrak A \to \mathfrak B$ such that $\overline{\mathfrak B \phi(a) \mathfrak B} = \mathfrak B(\mathsf U_a)$.
\end{proposition}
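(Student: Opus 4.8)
The plan is to produce $\phi$ as a composition $\phi = \Phi \circ \psi$, where $\psi \colon \mathfrak A \to \mathfrak C$ is obtained from the selection result Proposition \ref{p:fromAtoC} and $\Phi \colon \mathfrak C \to \mathfrak B$ is the structure map witnessing Property (UBS). First I would invoke Definition \ref{d:propertyubs} to fix a separable, commutative, upper semicontinuous $\mathsf X$-$C^\ast$-algebra $\mathfrak C$ together with a c.p.~map $\Phi_0$ satisfying $\mathfrak B(\mathsf U) = \overline{\mathfrak B \Phi_0(\mathfrak C(\mathsf U)) \mathfrak B}$ for all $\mathsf U \in \mathbb O(\mathsf X)$. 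Since $\mathfrak B$ is $\sigma$-unital, Remark \ref{r:ubsfactorB} lets me replace $\Phi_0$ by $\Phi := b_0 \Phi_0(-) b_0$ for a strictly positive $b_0 \in \mathfrak B$, so that $\Phi$ genuinely maps into $\mathfrak B$ while still witnessing Property (UBS). The key elementary gain is the containment $\Phi(\mathfrak C(\mathsf U)) \subseteq \mathfrak B(\mathsf U)$ for every $\mathsf U$: if $c \in \mathfrak C(\mathsf U)$ then $\Phi(c) = b_0 \Phi_0(c) b_0 \in \mathfrak B \Phi_0(\mathfrak C(\mathsf U)) \mathfrak B \subseteq \mathfrak B(\mathsf U)$. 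With $\mathfrak C$ at hand, Proposition \ref{p:fromAtoC} supplies an $\mathsf X$-equivariant c.p.~map $\psi \colon \mathfrak A \to \mathfrak C$ with $\psi(a)$ strictly positive in $\mathfrak C(\mathsf U_a)$, and I set $\phi = \Phi \circ \psi$.

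Equivariance and nuclearity are then quick. For equivariance, $\phi(\mathfrak A(\mathsf U)) = \Phi(\psi(\mathfrak A(\mathsf U))) \subseteq \Phi(\mathfrak C(\mathsf U)) \subseteq \mathfrak B(\mathsf U)$, using that $\psi$ is $\mathsf X$-equivariant together with the containment established above. For nuclearity, $\mathfrak C$ is commutative, hence nuclear, so $\mathrm{id}_{\mathfrak C}$ is nuclear and therefore $\Phi = \Phi \circ \mathrm{id}_{\mathfrak C}$ is nuclear; since the composition of a c.p.~map with a nuclear map is again nuclear, $\phi = \Phi \circ \psi$ is nuclear.

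It remains to verify $\overline{\mathfrak B \phi(a) \mathfrak B} = \mathfrak B(\mathsf U_a)$, which is the real content. Write $c_0 := \psi(a)$; identifying $\mathfrak C$ with $C_0(\Prim \mathfrak C)$, the ideal $\mathfrak C(\mathsf U_a)$ is $C_0(\mathsf V_a)$ for an open $\mathsf V_a$, and $c_0$ is strictly positive there, i.e.~$c_0 > 0$ on $\mathsf V_a$. The inclusion $\overline{\mathfrak B \phi(a) \mathfrak B} \subseteq \mathfrak B(\mathsf U_a)$ is immediate, since $\phi(a) = \Phi(c_0) \in \Phi(\mathfrak C(\mathsf U_a)) \subseteq \mathfrak B(\mathsf U_a)$ and $\mathfrak B(\mathsf U_a)$ is an ideal. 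For the reverse inclusion I would show $\Phi(\mathfrak C(\mathsf U_a)) \subseteq \overline{\mathfrak B \Phi(c_0) \mathfrak B}$, whence $\mathfrak B(\mathsf U_a) = \overline{\mathfrak B \Phi(\mathfrak C(\mathsf U_a)) \mathfrak B} \subseteq \overline{\mathfrak B \Phi(c_0) \mathfrak B} = \overline{\mathfrak B \phi(a) \mathfrak B}$. By continuity of $\Phi$ and density of the compactly supported functions it suffices to treat a positive $c \in C_0(\mathsf V_a)$ with compact support $K \subseteq \mathsf V_a$; there $c_0 \geq \epsilon > 0$ on $K$, so $0 \le c \le (\|c\|/\epsilon)\, c_0$ and hence $0 \le \Phi(c) \le (\|c\|/\epsilon)\,\Phi(c_0)$ by positivity of $\Phi$. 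A positive element dominated by a scalar multiple of $\Phi(c_0)$ lies in the closed ideal generated by $\Phi(c_0)$, so $\Phi(c) \in \overline{\mathfrak B \Phi(c_0)\mathfrak B}$, as needed. The main obstacle is precisely this last step: one must convert the \emph{strict positivity} (fullness) of $\psi(a)$ inside the commutative algebra $\mathfrak C(\mathsf U_a)$ into the statement that $\Phi(c_0)$ generates the full ideal $\mathfrak B(\mathsf U_a)$. The domination trick works exactly because $\mathfrak C$ is commutative, so that $C_0(\mathsf V_a)$ is exhausted by compactly supported functions dominated by multiples of the strictly positive $c_0$, and the positivity of $\Phi$ (not its multiplicativity) then transports the domination over to $\mathfrak B$.
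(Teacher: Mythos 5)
Your proposal is correct and takes essentially the same route as the paper: compose the selection map $\psi$ from Proposition \ref{p:fromAtoC} with the Property (UBS) structure map $\Phi$ (pushed into $\mathfrak B$ via Remark \ref{r:ubsfactorB}), so that $\phi = \Phi \circ \psi$ is nuclear and $\mathsf X$-equivariant. The only difference is that you spell out, via the domination argument with compactly supported functions, the final equality $\overline{\mathfrak B\, \Phi(\psi(a))\, \mathfrak B} = \overline{\mathfrak B\, \Phi(\mathfrak C(\mathsf U_a))\, \mathfrak B}$, which the paper asserts in one line; that verification is correct.
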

\begin{proof}
As $\mathfrak B$ has Property (UBS), we may find a separable, commutative, upper semicontinuous $\mathsf X$-$C^\ast$-algebra $\mathfrak C$, and a c.p.~map $\Phi \colon \mathfrak C \to \mathfrak B$ such that $\mathfrak B(\mathsf U) = \overline{\mathfrak B \Phi(\mathfrak C(\mathsf U)) \mathfrak B}$ for all $\mathsf U\in \mathbb O(\mathsf X)$. Clearly $\Phi$ is $\mathsf X$-equivariant.

Fix $a\in \mathfrak A$ positive. By Proposition \ref{p:fromAtoC}, there is an $\mathsf X$-equivariant c.p.~map $\psi \colon \mathfrak A \to \mathfrak C$ such that $\psi(a)$ is strictly positive in $\mathfrak C(\mathsf U_a)$. Let $\phi = \Phi \circ \psi$, which is $\mathsf X$-equivariant as both $\psi$ and $\Phi$ are, and nuclear since it factors through a commutative $C^\ast$-algebra. Also, 
\[
\overline{\mathfrak B \phi(a) \mathfrak B} = \overline{\mathfrak B \Phi(\mathfrak C(\mathsf U_a)) \mathfrak B} = \mathfrak B(\mathsf U_a). \qedhere
\]
\end{proof}

To give (many) more examples of $\mathsf X$-$C^\ast$-algebras with Property (UBS), we will use the following lemma. Recall, that we let $\otimes$ denote the spatial tensor product.

\begin{lemma}\label{l:coolstate}
 Let $\mathfrak D$ be a separable, exact $C^\ast$-algebra. Then there exists a state $\eta$ on $\mathfrak D$ with the following property: for any $C^\ast$-algebra $\mathfrak B$ and any two-sided, closed ideal $\mathfrak J$ in $\mathfrak B$,
 it holds for any $x\in \mathfrak B \otimes \mathfrak D$ that $x \in \mathfrak J \otimes \mathfrak D$ if and only if $(id \otimes \eta)(x^\ast x) \in \mathfrak J$.
\end{lemma}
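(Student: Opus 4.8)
The plan is to produce a single state $\eta$, depending only on $\mathfrak D$, and to reduce the ideal-detection property to the assertion that the slice map $R_\eta := id\otimes\eta\colon \mathfrak B\otimes\mathfrak D\to\mathfrak B$ is \emph{faithful}, i.e.~$R_\eta(w)=0$ with $w\geq 0$ forces $w=0$. In this reduction the exactness of $\mathfrak D$ enters only through the fact that tensoring the short exact sequence $0\to\mathfrak J\to\mathfrak B\xrightarrow{q}\mathfrak B/\mathfrak J\to 0$ by $\mathfrak D$ remains exact, so that $\mathfrak J\otimes\mathfrak D=\ker(q\otimes id_{\mathfrak D})$, while the separability of $\mathfrak D$ enters only in the construction of $\eta$.

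First I would construct $\eta$. Since $\mathfrak D$ is separable it has a faithful representation on a separable Hilbert space, so its enveloping von Neumann algebra in that representation has separable predual, hence is countably decomposable and therefore carries a faithful normal state. Performing the GNS construction of such a faithful normal state and restricting the resulting normal faithful representation back to $\mathfrak D$ yields a faithful representation $\pi_\eta\colon\mathfrak D\to\mathbb B(\mathcal H_\eta)$ together with a vector $\xi_\eta$ that is cyclic for $\pi_\eta(\mathfrak D)$ and \emph{separating} for $\pi_\eta(\mathfrak D)''$. I then set $\eta(d)=\langle \pi_\eta(d)\xi_\eta,\xi_\eta\rangle$, which is a faithful state whose GNS triple is exactly $(\mathcal H_\eta,\pi_\eta,\xi_\eta)$; the point of the separating vector will be clear in the next step.

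Next I would prove that for \emph{every} $C^\ast$-algebra $\mathfrak A$ the slice map $R_\eta\colon \mathfrak A\otimes\mathfrak D\to\mathfrak A$ is faithful. Represent $\mathfrak A$ faithfully on some $\mathcal H$ and $\mathfrak D$ via $\pi_\eta$ on $\mathcal H_\eta$, so that $\mathfrak A\otimes\mathfrak D\subseteq\mathbb B(\mathcal H\otimes\mathcal H_\eta)$. Checking on elementary tensors gives
\[
\langle R_\eta(w)\zeta,\zeta\rangle=\langle w(\zeta\otimes\xi_\eta),\zeta\otimes\xi_\eta\rangle
\]
for all $w\in\mathfrak A\otimes\mathfrak D$ and $\zeta\in\mathcal H$. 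Hence if $w\geq 0$ and $R_\eta(w)=0$ then $w(\zeta\otimes\xi_\eta)=0$ for every $\zeta$. For $T\in\pi_\eta(\mathfrak D)'$ the operator $1\otimes T$ commutes with every elementary tensor $a\otimes\pi_\eta(d)$, and hence with $w$, so $w(\zeta\otimes T\xi_\eta)=(1\otimes T)w(\zeta\otimes\xi_\eta)=0$. As $\xi_\eta$ is separating for $\pi_\eta(\mathfrak D)''$, equivalently cyclic for $\pi_\eta(\mathfrak D)'$, the vectors $T\xi_\eta$ are dense in $\mathcal H_\eta$, and therefore $w=0$.

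Finally I would assemble the statement, taking $\mathfrak A=\mathfrak B/\mathfrak J$. Naturality of the slice map gives $q\circ R_\eta = R_\eta\circ(q\otimes id_{\mathfrak D})$ (again checked on elementary tensors), so for $x\in\mathfrak B\otimes\mathfrak D$, writing $y:=(q\otimes id_{\mathfrak D})(x)$, one has $R_\eta(x^\ast x)\in\mathfrak J$ if and only if $q(R_\eta(x^\ast x))=R_\eta(y^\ast y)=0$. By the faithfulness just established this is equivalent to $y=0$, which by exactness of $\mathfrak D$ means precisely $x\in\ker(q\otimes id_{\mathfrak D})=\mathfrak J\otimes\mathfrak D$. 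The main obstacle is the very first step: an arbitrary faithful state on $\mathfrak D$ need not have a GNS vector separating for the double commutant, yet it is exactly this separating vector that drives the commutant argument in the faithfulness step; securing a faithful state whose normal extension to $\pi_\eta(\mathfrak D)''$ is again faithful is where separability, through countable decomposability and the existence of a faithful normal state, is indispensable.
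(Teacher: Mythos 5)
Your proposal is correct, but it takes a genuinely different route from the paper's proof. The paper's argument is softer: it picks a weak-$\ast$ dense sequence $(\eta_n)$ in the state space of $\mathfrak D$ (this is where separability enters) and sets $\eta = \sum_n 2^{-n}\eta_n$; it then quotes \cite[Corollary IV.3.4.2]{Blackadar-book-opalg}, which says that for exact $\mathfrak D$ one has $x\in\mathfrak J\otimes\mathfrak D$ if and only if $(id\otimes\eta')(x^\ast x)\in\mathfrak J$ for \emph{every} state $\eta'$, and reduces ``every state'' to the single state $\eta$ by two elementary observations: since $\mathfrak J$ is norm-closed and convex, hence weakly closed, weak-$\ast$ density lets one test only on the $\eta_n$; and since $\mathfrak J$ is hereditary, the inequalities $2^{-n}(id\otimes\eta_n)(x^\ast x)\leq (id\otimes\eta)(x^\ast x)$ show that all tests on the $\eta_n$ pass as soon as the single test on $\eta$ does. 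You instead construct one faithful state whose GNS vector is separating for $\pi_\eta(\mathfrak D)''$ (via a faithful normal state on the enveloping von Neumann algebra of a separable faithful representation --- your use of separability), prove directly that the slice map $id\otimes\eta$ is then faithful on $\mathfrak A\otimes\mathfrak D$ for every $\mathfrak A$ (the commutant/separating-vector argument, which is correct), and use exactness only through $\ker(q\otimes id_{\mathfrak D})=\mathfrak J\otimes\mathfrak D$ together with naturality of the slice map; in effect you re-prove, for your particular $\eta$, the content of the result the paper cites. What the paper's route buys is brevity: no von Neumann algebra theory, with the hard part outsourced to a standard reference. What your route buys is a self-contained and more transparent argument that isolates exactly where each hypothesis is used, and it establishes along the way the independently useful fact that $id\otimes\eta$ is faithful on $\mathfrak A\otimes\mathfrak D$ for arbitrary $\mathfrak A$ (which is in fact equivalent to the lemma's conclusion --- take $\mathfrak J=0$). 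Your closing caveat is also well placed: a faithful state need not have a GNS vector separating for the double commutant, and it is precisely the faithful \emph{normal} state on $\pi(\mathfrak D)''$ that secures this property in your construction.
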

\begin{proof}
 Let $(\eta_n)$ be a weak-$\ast$ dense sequence in the state space of $\mathfrak D$, and let $\eta = \sum_{n=1}^\infty 2^{-n} \eta_n$.
 Let $\mathfrak J, \mathfrak B$ and $x$ be given. 
 By \cite[Corollary IV.3.4.2]{Blackadar-book-opalg}, $x \in \mathfrak J \otimes \mathfrak D$ if and only if $(id \otimes \eta')(x^\ast x) \in \mathfrak J$ for every state $\eta'$ on $\mathfrak D$.
 Clearly it suffices to only consider the case where $\eta'$ runs through all $\eta_n$ since these sit densely in the state space. 
 But since $(id \otimes \eta_n)(x^\ast x)$ is positive for each $n$, and $\mathfrak J$ is a hereditary $C^\ast$-subalgebra of $\mathfrak B$,
 it follows that $(id \otimes \eta_n)(x^\ast x) \in \mathfrak J$ for all $n$ if and only if $\sum_{n=1}^\infty 2^{-n} (id \otimes \eta_n)(x^\ast x) = (id \otimes \eta)(x^\ast x) \in \mathfrak J$.
\end{proof}

\begin{proposition}\label{p:ubstensorD}
Let $\mathfrak B$ be a $\sigma$-unital $\mathsf X$-$C^\ast$-algebra, let $\mathfrak D$ be a separable, exact $C^\ast$-algebra, and let $\mathfrak C$ be a separable, commutative, upper semicontinuous $\mathsf X$-$ C^\ast$-algebra. Then $\mathfrak B$ has Property (UBS) with respect to $\mathfrak C$ if and only if $\mathfrak B \otimes \mathfrak D$ has Property (UBS) with respect to $\mathfrak C$.
\end{proposition}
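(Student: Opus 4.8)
The plan is to prove the two implications separately, using Remark~\ref{r:ubsfactorB} in each case to arrange that the implementing c.p.\ map factors through the relevant algebra rather than merely mapping into its multiplier algebra. This is legitimate because both $\mathfrak B$ and $\mathfrak B \otimes \mathfrak D$ are $\sigma$-unital (the latter since $\mathfrak D$ is separable, so a strictly positive $b \in \mathfrak B$ and $d \in \mathfrak D$ give a strictly positive $b \otimes d$). Recall that the action on $\mathfrak B \otimes \mathfrak D$ is $\mathsf U \mapsto \mathfrak B(\mathsf U) \otimes \mathfrak D$. I will freely use that a positive $p$ in a $C^\ast$-algebra $\mathfrak E$ lies in the ideal $\overline{\mathfrak E p \mathfrak E}$ it generates, and that $\mathfrak J \otimes \mathfrak D \subseteq \mathfrak I \otimes \mathfrak D$ forces $\mathfrak J \subseteq \mathfrak I$ (apply a nonzero state of $\mathfrak D$ to the second tensor leg).

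For the forward direction, suppose $\Phi \colon \mathfrak C \to \mathfrak B$ implements Property (UBS) for $\mathfrak B$. Fix a strictly positive element $d \in \mathfrak D$ and set $\tilde\Phi(c) = \Phi(c) \otimes d$, which is c.p.\ as the composition of $\Phi$ with the c.p.\ map $b \mapsto b \otimes d$. First I would record that for a single $c$ the ideal of $\mathfrak B \otimes \mathfrak D$ generated by $\Phi(c)\otimes d$ is $\overline{\mathfrak B \Phi(c)\mathfrak B} \otimes \overline{\mathfrak D d \mathfrak D} = \overline{\mathfrak B\Phi(c)\mathfrak B}\otimes\mathfrak D$, the last equality because $d$ is strictly positive. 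Taking closed spans over $c \in \mathfrak C(\mathsf U)$ then gives $\overline{(\mathfrak B\otimes\mathfrak D)\tilde\Phi(\mathfrak C(\mathsf U))(\mathfrak B\otimes\mathfrak D)} = \overline{\mathfrak B\Phi(\mathfrak C(\mathsf U))\mathfrak B}\otimes\mathfrak D = \mathfrak B(\mathsf U)\otimes\mathfrak D = (\mathfrak B\otimes\mathfrak D)(\mathsf U)$. Note this direction does not use exactness of $\mathfrak D$.

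For the reverse direction, suppose $\Psi \colon \mathfrak C \to \mathfrak B \otimes \mathfrak D$ implements Property (UBS) for $\mathfrak B \otimes \mathfrak D$. Let $\eta$ be the universal state on $\mathfrak D$ provided by Lemma~\ref{l:coolstate} and set $\Phi = (id_{\mathfrak B}\otimes\eta)\circ\Psi \colon \mathfrak C \to \mathfrak B$, which is c.p.\ as a composition of c.p.\ maps. Writing $\mathfrak J = \mathfrak B(\mathsf U)$ and $\mathfrak I = \overline{\mathfrak B\Phi(\mathfrak C(\mathsf U))\mathfrak B}$, I must show $\mathfrak I = \mathfrak J$ for every $\mathsf U$. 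The inclusion $\mathfrak I \subseteq \mathfrak J$ is the easy half: for positive $c \in \mathfrak C(\mathsf U)$ the element $\Psi(c)\geq 0$ lies in the ideal it generates, hence in $\mathfrak J\otimes\mathfrak D$, so $\Phi(c) = (id_{\mathfrak B}\otimes\eta)(\Psi(c)) \in (id_{\mathfrak B}\otimes\eta)(\mathfrak J\otimes\mathfrak D)\subseteq\mathfrak J$, and this extends to all of $\mathfrak C(\mathsf U)$ by linearity.

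The crux is the reverse inclusion $\mathfrak J \subseteq \mathfrak I$, and this is where Lemma~\ref{l:coolstate} carries the argument. It suffices to prove $\Psi(\mathfrak C(\mathsf U)) \subseteq \mathfrak I\otimes\mathfrak D$, since then $\mathfrak J\otimes\mathfrak D = \overline{(\mathfrak B\otimes\mathfrak D)\Psi(\mathfrak C(\mathsf U))(\mathfrak B\otimes\mathfrak D)} \subseteq \mathfrak I\otimes\mathfrak D$, forcing $\mathfrak J\subseteq\mathfrak I$. Fix positive $c\in\mathfrak C(\mathsf U)$ and put $p = \Psi(c)\geq 0$, so $\Phi(c) = (id_{\mathfrak B}\otimes\eta)(p)\in\mathfrak I$. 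From $p^2 \leq \|p\|\,p$ and positivity of the slice map I obtain $0 \leq (id_{\mathfrak B}\otimes\eta)(p^2) \leq \|p\|\,\Phi(c)$, so $(id_{\mathfrak B}\otimes\eta)(p^\ast p) = (id_{\mathfrak B}\otimes\eta)(p^2) \in \mathfrak I$ because $\mathfrak I$ is hereditary. Lemma~\ref{l:coolstate}, applied to the ideal $\mathfrak I \subseteq \mathfrak B$, then yields $p = \Psi(c)\in\mathfrak I\otimes\mathfrak D$, and linearity finishes the inclusion. The main obstacle is exactly this last step: passing from the single slice $\Phi(c)=(id_{\mathfrak B}\otimes\eta)(p)$ lying in $\mathfrak I$ back to $p$ lying in $\mathfrak I\otimes\mathfrak D$. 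This would fail for an arbitrary state, and the point of the universal $\eta$ of Lemma~\ref{l:coolstate}, together with the inequality $p^2\le\|p\|\,p$ and hereditarity of $\mathfrak I$ bridging $(id_{\mathfrak B}\otimes\eta)(p)$ and $(id_{\mathfrak B}\otimes\eta)(p^\ast p)$, is precisely to make it go through; here the exactness of $\mathfrak D$ enters, through Lemma~\ref{l:coolstate}.
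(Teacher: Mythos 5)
Your proof is correct and takes essentially the same route as the paper: tensor the UBS-implementing map to pass from $\mathfrak B$ to $\mathfrak B \otimes \mathfrak D$, and compose with the universal state of Lemma~\ref{l:coolstate} to come back, recovering the ideal equality via the slice-map criterion. The only differences are cosmetic --- the paper tensors with $1_{\multialg{\mathfrak D}}$ rather than with a strictly positive $d \in \mathfrak D$ --- and your argument via $p^2 \le \|p\|\,p$ and hereditarity of $\mathfrak I$ usefully spells out the step the paper leaves implicit when it invokes Lemma~\ref{l:coolstate} to place $\tilde\Phi(\mathfrak C(\mathsf U))$ inside $\mathfrak J_{\mathsf U} \otimes \mathfrak D$.
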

\begin{proof}
If $\mathfrak B$ has Property (UBS) with respect to $\mathfrak C$, and $\tilde \Phi\colon \mathfrak C \to \multialg{\mathfrak B}$ is a c.p.~map as in Definition \ref{d:propertyubs}, then 
\[
\Phi = \tilde \Phi \otimes 1_{\multialg{\mathfrak D}} \colon \mathfrak C \to \multialg{\mathfrak B} \otimes \multialg{\mathfrak D} \hookrightarrow \multialg{\mathfrak B \otimes \mathfrak D},
\]
is a c.p.~map satisfying $\overline{(\mathfrak B \otimes \mathfrak D)\Phi(\mathfrak C(\mathsf U)) (\mathfrak B \otimes \mathfrak D)} = \mathfrak B(\mathsf U) \otimes \mathfrak D$.

Conversely, suppose that $\mathfrak B\otimes \mathfrak D$ has Property (UBS) with respect to $\mathfrak C$. 
Clearly $\mathfrak B \otimes \mathfrak D$ is $\sigma$-unital since it has a countable approximate identity, so we may find $\tilde \Phi \colon \mathfrak C \to \mathfrak B \otimes \mathfrak D$ as in Remark \ref{r:ubsfactorB}. 
Let $\eta$ be a state on $\mathfrak D$ as given by Lemma \ref{l:coolstate}. Define $\Phi \colon \mathfrak C \to \mathfrak B$ to be the composition
 \[
  \mathfrak C \xrightarrow{\tilde \Phi} \mathfrak B \otimes \mathfrak D \xrightarrow{id_\mathfrak{B} \otimes \eta} \mathfrak B.
 \]

 Let $\mathfrak J_\mathsf{U} := \overline{\mathfrak B \Phi(\mathfrak C(\mathsf U)) \mathfrak B}$. Since 
\[
\Phi(\mathfrak C(\mathsf U)) = (id_\mathfrak{B} \otimes \eta)(\tilde \Phi(\mathfrak C(\mathsf U))) \subseteq (id_\mathfrak{B} \otimes \eta)(\mathfrak B(\mathsf U) \otimes \mathfrak D) = \mathfrak B(\mathsf U),
\]
it follows that $\mathfrak J_\mathsf{U} \subseteq \mathfrak B(\mathsf U)$. 
 By Lemma \ref{l:coolstate} any element in $\tilde \Phi(\mathfrak C(\mathsf U))$ will be in $\mathfrak J_{\mathsf U} \otimes \mathfrak D$. This implies that $\mathfrak B(\mathsf U) \otimes \mathfrak D \subseteq \mathfrak J_{\mathsf U} \otimes \mathfrak D$.
 It follows that $\mathfrak B(\mathsf U) = \mathfrak J_\mathsf{U}$ which finishes the proof. 
\end{proof}

The following proposition, which uses somewhat heavy machinery of Kirchberg and Rørdam, shows that almost all $\mathsf X$-$C^\ast$-algebras of interest have Property (UBS).

\begin{proposition}\label{p:manyproperty}
 Any separable, nuclear, upper semicontinuous $\mathsf X$-$C^\ast$-algebra has Property (UBS). 
 Moreover, we may choose that it has Property (UBS) with respect to a $\mathfrak C$, where the covering dimension of $\Prim \mathfrak C$ is at most 1.
\end{proposition}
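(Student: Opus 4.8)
The plan is to exhibit a single commutative one-dimensional ``model'' of the ideal structure of $\mathfrak B$ and to read off Property (UBS) from it. Since $\mathfrak B$ is separable, $\Prim \mathfrak B$ is second countable and $\mathbb I(\mathfrak B) \cong \mathbb O(\Prim \mathfrak B)$; moreover, combining the finite and monotone conditions in Definition~\ref{d:actions}, upper semicontinuity means that $\mathsf U \mapsto \mathfrak B(\mathsf U)$ sends unions to closed sums, i.e.\ preserves arbitrary suprema. Thus the action is pinned down by its values on a countable supremum-generating family of ideals. What I want to produce is a separable, commutative, upper semicontinuous $\mathsf X$-$C^\ast$-algebra $\mathfrak C = C_0(\mathsf Y)$ with $\dim \mathsf Y \le 1$, together with a c.p.\ map $\Phi\colon \mathfrak C \to \multialg{\mathfrak B}$ satisfying $\mathfrak B(\mathsf U) = \overline{\mathfrak B \Phi(\mathfrak C(\mathsf U)) \mathfrak B}$ for every $\mathsf U$. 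Since $\mathfrak B$ is $\sigma$-unital I may, by Remark~\ref{r:ubsfactorB}, even arrange that $\Phi$ takes values in $\mathfrak B$.

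Given such a triple $(\mathsf Y, \mathfrak C, \Phi)$, Property (UBS) holds by definition and the dimension bound is immediate, so all the content is in the construction. The underlying task is to factor the supremum-preserving map $\mathbb O(\mathsf X) \to \mathbb I(\mathfrak B)$ through a supremum-preserving surjection $\lambda\colon \mathbb O(\mathsf Y) \to \mathbb I(\mathfrak B)$, $\mathsf V \mapsto \overline{\mathfrak B \Phi(C_0(\mathsf V)) \mathfrak B}$, by an upper semicontinuous action $\mathsf U \mapsto \mathfrak C(\mathsf U)$ of $\mathsf X$ on $C_0(\mathsf Y)$; the single map $\Phi$ then witnesses both $\lambda$ and the displayed equality. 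I would build $\lambda$ (equivalently, $\mathsf Y$ and $\Phi$) first, realizing the whole lattice $\mathbb I(\mathfrak B)$, and then transport the $\mathsf X$-action across it.

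The construction of the one-dimensional model is the crux, and it is exactly here that the heavy machinery of Kirchberg and Rørdam, together with nuclearity of $\mathfrak B$, is needed. The naive attempt is to take $\mathsf Y$ to be a disjoint union of half-open intervals, one for each member $W_n$ of a countable basis of $\Prim \mathfrak B$, and to let $\Phi$ send a bump on the $n$-th interval to a positive generator of the ideal $I_n \leftrightarrow W_n$ (summing in the strict topology of $\multialg{\mathfrak B}$). This recovers every ideal as a supremum of the $I_n$ and is even zero-dimensional, but it is \emph{not} finitely upper semicontinuous: a relation $I_n \subseteq I_k + I_l$ in $\mathbb I(\mathfrak B)$ need not be witnessed by $I_n \subseteq I_k$ or $I_n \subseteq I_l$, so a totally disconnected $\mathsf Y$ cannot encode how ideals join, and the required identity $\mathfrak B(\mathsf U)+\mathfrak B(\mathsf V)=\mathfrak B(\mathsf U\cup\mathsf V)$ breaks. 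Capturing these join relations is precisely what forces genuine one-dimensional connectivity in $\mathsf Y$ --- the connecting arcs let separate ``sheets'' of ideals merge --- and is why the covering dimension can be held at $1$ but not, in general, at $0$. I would therefore invoke the Kirchberg--Rørdam representation of the second countable lattice $\mathbb I(\mathfrak B)$ by such a one-dimensional space, using nuclearity of $\mathfrak B$ to guarantee the accompanying c.p.\ map $\Phi$ (so that the composites $b^\ast\Phi(-)b$ are nuclear and the ideals generated by $\Phi(C_0(\mathsf V))$ are exactly the prescribed ones). Assembling $\mathsf Y$, the transported action, and $\Phi$ then yields Property (UBS) with $\dim \Prim \mathfrak C \le 1$.
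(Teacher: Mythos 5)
Your overall strategy --- reduce to producing a separable commutative model $\mathfrak C = C_0(\mathsf Y)$ with $\dim \mathsf Y \le 1$ plus a c.p.\ map $\Phi$ witnessing the ideals, and your diagnosis of why a totally disconnected $\mathsf Y$ fails (join relations $\mathfrak B(\mathsf U)+\mathfrak B(\mathsf V)=\mathfrak B(\mathsf U \cup \mathsf V)$ cannot be encoded without connecting arcs) --- matches the paper's intent and is the right conceptual picture. But the step where you close the argument, ``invoke the Kirchberg--R{\o}rdam representation of the second countable lattice $\mathbb I(\mathfrak B)$ by such a one-dimensional space, using nuclearity of $\mathfrak B$ to guarantee the accompanying c.p.\ map $\Phi$,'' cites a theorem that does not exist in that form. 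What Kirchberg--R{\o}rdam actually prove (\cite[Theorem 6.11]{KirchbergRordam-zero}) is an \emph{embedding} statement: for separable nuclear $\mathfrak B$, the algebra $\mathfrak B \otimes \mathcal O_2$ contains a \emph{regular} commutative $C^\ast$-subalgebra $\mathfrak C$ with $\dim \Prim \mathfrak C \le 1$, where regularity means $(\mathfrak C \cap \mathfrak I) + (\mathfrak C \cap \mathfrak J) = \mathfrak C \cap (\mathfrak I + \mathfrak J)$ and $\mathfrak C \cap \mathfrak I = \mathfrak C \cap \mathfrak J \Rightarrow \mathfrak I = \mathfrak J$. The tensoring with $\mathcal O_2$ is not incidental --- pure infiniteness is what makes the one-dimensional abelian model embed --- and the theorem gives you a subalgebra of $\mathfrak B \otimes \mathcal O_2$, not a c.p.\ map into $\multialg{\mathfrak B}$. (Incidentally, regularity is also exactly the mechanism that makes the transported action $\mathfrak C(\mathsf U) := \mathfrak C \cap (\mathfrak B(\mathsf U)\otimes\mathcal O_2)$ upper semicontinuous, i.e.\ it resolves precisely the join-relation problem you identified; your proposal gestures at ``transporting the action'' but never names the property that makes this work.)

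Consequently your proof is missing its second half: a descent argument from $\mathfrak B \otimes \mathcal O_2$ back to $\mathfrak B$. Having Property (UBS) for $\mathfrak B \otimes \mathcal O_2$ (with $\Phi$ the inclusion of $\mathfrak C$) does not formally give it for $\mathfrak B$; one must produce a c.p.\ map $\mathfrak C \to \multialg{\mathfrak B}$ generating the right ideals. In the paper this is Proposition \ref{p:ubstensorD}: one composes $\mathfrak C \hookrightarrow \mathfrak B \otimes \mathcal O_2 \xrightarrow{id_{\mathfrak B}\otimes \eta} \mathfrak B$, where $\eta$ is the special state of Lemma \ref{l:coolstate} on the separable \emph{exact} algebra $\mathcal O_2$, chosen so that the slice map $id_{\mathfrak B}\otimes\eta$ detects membership in $\mathfrak J \otimes \mathcal O_2$ for every ideal $\mathfrak J \subseteq \mathfrak B$; this is what forces $\overline{\mathfrak B\, \Phi(\mathfrak C(\mathsf U))\, \mathfrak B} = \mathfrak B(\mathsf U)$ rather than some smaller ideal. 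This bridge is a genuine argument (using \cite[Corollary IV.3.4.2]{Blackadar-book-opalg}), not bookkeeping, and nothing in your proposal substitutes for it. Finally, your parenthetical that nuclearity of $\mathfrak B$ is needed ``so that the composites $b^\ast\Phi(-)b$ are nuclear'' is misdirected: Definition \ref{d:propertyubs} imposes no nuclearity on $\Phi$ whatsoever; nuclearity of $\mathfrak B$ is consumed entirely inside the Kirchberg--R{\o}rdam embedding theorem.
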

Although we do not need the covering dimension of $\mathfrak C$ to be at most $1$ in this paper, the author believes that this could be important in future applications.
\begin{proof}
 Let $\mathfrak B$ be a separable, nuclear, upper semicontinuous $\mathsf X$-$C^\ast$-algebra.
 A $C^\ast$-subalgebra $\mathfrak C \subseteq \mathfrak B$ is called \emph{regular} if $(\mathfrak C \cap \mathfrak I) + (\mathfrak C \cap \mathfrak J) = \mathfrak C \cap (\mathfrak I + \mathfrak J)$,
 and if $\mathfrak C \cap \mathfrak I = \mathfrak C \cap \mathfrak J$ implies $\mathfrak I = \mathfrak J$ for all $\mathfrak I, \mathfrak J \in \mathbb I(\mathfrak B)$. 
 By \cite[Theorem 6.11]{KirchbergRordam-zero}\footnote{Note that the proof of \cite[Theorem 6.11]{KirchbergRordam-zero} does \emph{not} require any of the classification results of \cite{Kirchberg-non-simple} although other results in the paper do.
 Thus if one's goal is to reprove the results in \cite{Kirchberg-non-simple}, one may still use this result.},
 $\mathfrak B \otimes \mathcal O_2$ contains a regular, commutative $C^\ast$-subalgebra $\mathfrak C$ such that $\Prim \mathfrak C$ has covering dimension at most $1$. 
 Clearly $\mathfrak C$ is separable since $\mathfrak B$ is. Equip $\mathfrak C$ with the action of $\mathsf X$ given by
 $\mathfrak C(\mathsf U) = \mathfrak C \cap \mathfrak B(\mathsf U) \otimes \mathcal O_2$ for $\mathsf U\in \mathbb O(\mathsf X)$. 
  
 Since $\mathfrak C$ is a regular $C^\ast$-subalgebra of $\mathfrak B \otimes \mathcal O_2$, which is upper semicontinuous by Lemma \ref{l:actionstensorD}, $\mathfrak C$ is clearly upper semicontinuous. 
 
 Let $\mathfrak I \in \mathbb I(\mathfrak B \otimes \mathcal O_2)$ and let $\mathfrak J$ be the two-sided, closed ideal in $\mathfrak B \otimes \mathcal O_2$ generated by $\mathfrak C \cap \mathfrak I$. 
 Then $\mathfrak C \cap \mathfrak I = \mathfrak C \cap \mathfrak J$ which implies that $\mathfrak I = \mathfrak J$.
 Thus $\mathfrak B(\mathsf U) \otimes \mathcal O_2 = \overline{(\mathfrak B \otimes \mathcal O_2) \mathfrak C(\mathsf U) (\mathfrak B \otimes \mathcal O_2)}$ for all $\mathsf U\in \mathbb O(\mathsf X)$, so $\mathfrak B\otimes \mathcal O_2$ has Property (UBS) with respect to $\mathfrak C$. By Proposition \ref{p:ubstensorD}, $\mathfrak B$ has Property (UBS) with respect to $\mathfrak C$.
\end{proof}

\section{The ideal related lifting theorems}

In this section we prove $\mathsf X$-equivariant versions of the Choi--Effros lifting theorem and of the Effros--Haagerup lifting theorem. As a consequence, we show that extensions of nuclear $\mathsf X$-$C^\ast$-algebras have $\mathsf X$-equivariant c.p.~splittings, as long as the actions on the ideal and the quotient are sufficiently nice. Such results are closely related to ideal related $KK$-theory.

If $\mathfrak B$ is a $C^\ast$-algebra, and $\mathfrak J$ is a closed, two-sided ideal in $\mathfrak B$, then there are induced ideals in the multiplier algebra and the corona algebra, given by
\begin{eqnarray*}
\multialg{\mathfrak B, \mathfrak J} &=& \{ x \in \multialg{\mathfrak B} : x\mathfrak B \subseteq \mathfrak J\}, \\
\corona{\mathfrak B, \mathfrak J} &=& \pi(\multialg{\mathfrak B, \mathfrak J})
\end{eqnarray*}
where $\pi \colon \multialg{\mathfrak B} \to  \corona{\mathfrak B}$ is the quotient map. 

If $\mathfrak B$ is a stable $C^\ast$-algebra, then there exist isometries $s_1,s_2,\dots \in \multialg{\mathfrak B}$ such that $\sum_{k=1}^\infty s_k s_k^\ast$ converges strictly to $1_{\multialg{\mathfrak B}}$. By an \emph{infinite repeat} $x_\infty$ of an element $x \in \multialg{\mathfrak B}$, we mean $x_\infty = \sum_{k=1}^\infty s_k x s_k^\ast$, for some $s_1,s_2,\dots$ as above. Infinite repeats are unique up to unitary equivalence. In fact, if $t_1,t_2,\dots\in \multialg{\mathfrak B}$ are also isometries as above, then $u = \sum_{k=1}^\infty s_kt_k^\ast$ is a unitary in $\multialg{\mathfrak B}$ satisfying $u^\ast (\sum_{k=1}^\infty s_k x s_k^\ast) u = \sum_{k=1}^\infty t_k x t_k^\ast$.

\begin{lemma}\label{l:fullproj}
Let $\mathfrak J$ be a $\sigma$-unital ideal in a stable $C^\ast$-algebra $\mathfrak B$. Then $\multialg{\mathfrak B, \mathfrak J}$ contains a (norm-)full projection $P$.
\end{lemma}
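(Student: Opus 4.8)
The plan is to pass to the Hilbert-module picture furnished by stability and to produce $P$ via Kasparov's stabilization theorem. First I would use that $\mathfrak B$ is stable to fix an isomorphism $\mathfrak B \cong \mathfrak B \otimes \mathbb K$ and write $H_{\mathfrak B} = \mathfrak B \otimes \ell^2$ for the standard Hilbert $\mathfrak B$-module, so that $\mathfrak B = \mathbb K(H_{\mathfrak B})$ and $\multialg{\mathfrak B} = \mathcal L(H_{\mathfrak B})$, the adjointable operators. Since $\mathbb K$ is simple, $\mathfrak J$ corresponds to $\mathfrak I \otimes \mathbb K$ for a ($\sigma$-unital) ideal $\mathfrak I$ in $\mathfrak B$; putting $H_{\mathfrak I} = \mathfrak I \otimes \ell^2 \subseteq H_{\mathfrak B}$ we get $\mathfrak J = \mathbb K(H_{\mathfrak I})$, and because $\mathfrak I$ is an ideal one checks $H_{\mathfrak I} = \{\xi \in H_{\mathfrak B} : \langle \xi, \xi\rangle \in \mathfrak I\}$. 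The payoff is the description
\[
\multialg{\mathfrak B, \mathfrak J} = \{ T \in \mathcal L(H_{\mathfrak B}) : T H_{\mathfrak B} \subseteq H_{\mathfrak I} \text{ and } T^* H_{\mathfrak B} \subseteq H_{\mathfrak I}\},
\]
so that a \emph{projection} $P \in \mathcal L(H_{\mathfrak B})$ lies in $\multialg{\mathfrak B, \mathfrak J}$ exactly when its range is a complemented submodule of $H_{\mathfrak B}$ contained in $H_{\mathfrak I}$ (for a self-adjoint $P$ the two conditions coincide, so the one-sided definition used above suffices).

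Next I would build $P$. As $\mathfrak I$ is $\sigma$-unital, $H_{\mathfrak I}$ is countably generated, so Kasparov's stabilization theorem gives $H_{\mathfrak I} \oplus H_{\mathfrak B} \cong H_{\mathfrak B}$; equivalently there is an isometry $U \in \mathcal L(H_{\mathfrak I}, H_{\mathfrak B})$ (so $U^* U = 1_{\multialg{\mathfrak J}}$) with complemented range. Since $U$ is isometric, $\langle U\zeta, U\zeta\rangle = \langle \zeta, \zeta\rangle \in \mathfrak I$ for all $\zeta$, so the range of $U$ lies in $H_{\mathfrak I}$. Hence $P := U U^*$ is a projection whose range is a complemented submodule of $H_{\mathfrak B}$ contained in $H_{\mathfrak I}$, that is, $P \in \multialg{\mathfrak B, \mathfrak J}$.

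For fullness I would first record the easy part. The map $m \mapsto U m U^*$ is an injective $\ast$-homomorphism $\multialg{\mathfrak J} \to \multialg{\mathfrak B, \mathfrak J}$ sending $1$ to $P$ and identifying $\multialg{\mathfrak J}$ with the corner $P\,\multialg{\mathfrak B, \mathfrak J}\,P$; in particular the inner products of the range of $P$ exhaust $\langle H_{\mathfrak I}, H_{\mathfrak I}\rangle$, which generates $\mathfrak I$, so $P$ is full in $\mathfrak J$ and the ideal $\overline{\multialg{\mathfrak B, \mathfrak J}\, P\, \multialg{\mathfrak B, \mathfrak J}}$ contains $\mathfrak J$. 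Moreover any two complemented submodules of $H_{\mathfrak B}$ that are contained in $H_{\mathfrak I}$ and isomorphic to $H_{\mathfrak I}$ have projections that are Murray--von Neumann equivalent \emph{inside} $\multialg{\mathfrak B, \mathfrak J}$ (the implementing partial isometry has range and co-range in $H_{\mathfrak I}$). Thus for every such submodule $\Sigma$ the projection $P_\Sigma$ is equivalent to $P$, hence $P_\Sigma$ and the whole corner $P_\Sigma\,\multialg{\mathfrak B, \mathfrak J}\,P_\Sigma$ lie in the ideal generated by $P$.

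The main obstacle is the remaining step: showing that these corners, together with $\mathfrak J$, exhaust $\multialg{\mathfrak B, \mathfrak J}$, i.e.\ that $P$ is full and not merely full in the essential ideal $\mathfrak J$. The difficulty is genuine: $\operatorname{ran} P$ is a proper submodule of $H_{\mathfrak I}$ (indeed $H_{\mathfrak I}$ is never complemented in $H_{\mathfrak B}$ unless $\mathfrak I = \mathfrak B$), so $P$ is far from a unit and no approximate unit drawn from $\mathfrak J$ can recover the non-compact (multiplier) part of $\multialg{\mathfrak B, \mathfrak J}$. Since $P$ is full in the essential ideal $\mathfrak J$, the problem is equivalent to fullness of the image $\bar P$ in the corona $\corona{\mathfrak B, \mathfrak J} = \multialg{\mathfrak B, \mathfrak J}/\mathfrak J$. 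I would attempt this by approximating an arbitrary $T \in \multialg{\mathfrak B, \mathfrak J}$ in norm by truncations $T g(T^*T)$ with $g$ vanishing near $0$ and trying to arrange, using that $H_{\mathfrak I}$ (as a countably generated module over $\mathfrak I$) absorbs $\overline{\operatorname{ran} T + \operatorname{ran} T^*}$, that each truncation is supported in a complemented copy of $H_{\mathfrak I}$ already known to lie in the ideal generated by $P$; the delicate point—where I expect the real work to be—is that ranges of adjointable operators need not be complemented, so capturing each truncation inside such a copy is exactly what must be forced.
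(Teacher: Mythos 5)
Your construction of $P$ is correct, and it is in effect the paper's projection after the paper's infinite-repeat step: the paper first produces a projection $Q$ with range $\cong \mathfrak J$ (Kasparov stabilisation applied to $\mathfrak J\oplus\mathfrak B\cong\mathfrak B$) and then replaces $Q$ by an infinite repeat, whose range is a full copy of $\mathfrak J\otimes\ell^2$ --- which is exactly the kind of range your $P=UU^\ast$ has from the start. Your observation that the ideal of $\multialg{\mathfrak B,\mathfrak J}$ generated by $P$ contains $\mathfrak J$ is also correct. But the argument then stops precisely where the content of the lemma begins: fullness of $P$ in all of $\multialg{\mathfrak B,\mathfrak J}$ is never proved, only reformulated (as fullness of $\bar P$ in $\corona{\mathfrak B,\mathfrak J}$) and attacked by a truncation/absorption sketch that you yourself flag as unresolved. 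That is a genuine gap, and the obstruction you name (ranges of adjointable operators need not be complemented) is real; I see no way to push that sketch through as stated.

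The missing idea is the paper's detour through $\multialg{\mathfrak J}$. Every $T\in\multialg{\mathfrak B,\mathfrak J}$ maps $H_{\mathfrak B}$, hence $H_{\mathfrak I}$, into $H_{\mathfrak I}$, so restriction gives a $\ast$-homomorphism $\iota\colon\multialg{\mathfrak B,\mathfrak J}\to\mathbb B(H_{\mathfrak I})=\multialg{\mathfrak J}$, injective because $\mathfrak J$ is essential in $\multialg{\mathfrak B,\mathfrak J}$; and --- this is the key computation in the paper --- its image is a \emph{hereditary} $C^\ast$-subalgebra of $\multialg{\mathfrak J}$: for $x_1,x_2\in\multialg{\mathfrak B,\mathfrak J}$ and $y\in\multialg{\mathfrak J}$, the map $b\mapsto x_1(y(x_2b))$ defines an element of $\multialg{\mathfrak B,\mathfrak J}$ whose restriction is $\iota(x_1)y\iota(x_2)$. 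Now two projections in $\mathbb B(H_{\mathfrak I})$ are Murray--von Neumann equivalent exactly when their ranges are isomorphic Hilbert modules; since $\iota(P)$ has range $UH_{\mathfrak I}\cong H_{\mathfrak I}$, this gives $\iota(P)\sim 1_{\multialg{\mathfrak J}}$ directly (with your choice of $P$ one does not even need \cite[Lemma 2.5]{Brown-stableiso}, which the paper invokes only because its $Q$ has the ``small'' range $\mathfrak J$ and must first be infinitely repeated). In particular $\iota(P)$ is full in $\multialg{\mathfrak J}$, and fullness descends to hereditary subalgebras containing the projection: for positive $x\in\multialg{\mathfrak B,\mathfrak J}$, approximate $\iota(x)^{1/2}$ by $\sum_i y_i\,\iota(P)\,z_i$ with $y_i,z_i\in\multialg{\mathfrak J}$; then $\iota(x)=\iota(x)^{1/4}\iota(x)^{1/2}\iota(x)^{1/4}$ is approximated by $\sum_i\bigl(\iota(x)^{1/4}y_i\iota(P)\bigr)\,\iota(P)\,\bigl(\iota(P)z_i\iota(x)^{1/4}\bigr)$, and the outer factors lie in $\iota(\multialg{\mathfrak B,\mathfrak J})$ by hereditariness, so $\iota(x)$, hence $x$, lies in the closed ideal generated by the projection. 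Without this mechanism (or a substitute for it), your proof cannot be completed as written.
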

\begin{proof}
As $\mathfrak J$ is an essential ideal in $\multialg{\mathfrak B, \mathfrak J}$ there is an induced embedding $\iota \colon \multialg{\mathfrak B, \mathfrak J} \hookrightarrow \multialg{\mathfrak J}$. The image of $\iota$ is easily seen to be a hereditary $C^\ast$-subalgebra of $\multialg{\mathfrak B}$. In fact, let $x_1,x_2\in \multialg{\mathfrak B,\mathfrak J}$ and $y\in \multialg{\mathfrak J}$. We define a multiplier $z \in \multialg{\mathfrak B, \mathfrak J}$ by
\[
z b := x_1(y(x_2b)), \quad bz := ((b x_1)y)x_2, \qquad b\in \mathfrak B.
\]
Then $\iota(z) = \iota(x_1) y \iota(x_2)$, so $\iota(\multialg{\mathfrak B, \mathfrak J})$ is hereditary in $\multialg{\mathfrak J}$.

As $\mathfrak B$ is stable, $\mathfrak B \cong \mathfrak B \otimes \ell^2(\mathbb N)$, as Hilbert $\mathfrak B$-modules. By Kasparov's stabilisation theorem \cite[Theorem 2]{Kasparov-Stinespring}, the Hilbert $\mathfrak B$-module $\mathfrak J \oplus \mathfrak B$ is isomorphic to $\mathfrak B$. Thus there is a projection $Q \in \mathbb B(\mathfrak B) = \multialg{\mathfrak B}$ (corresponding to $1\oplus 0 \in \mathbb B(\mathfrak J \oplus \mathfrak B)$), such that $Q \mathfrak B \cong \mathfrak J$ as Hilbert $\mathfrak B$-modules. As $\langle Q\mathfrak B , Q \mathfrak B \rangle = \mathfrak J$, it follows that $Q \mathfrak B Q = \mathbb K(Q\mathfrak B)$ is full in $\mathfrak J$.

Let $P$ be an infinite repeat of $Q$ in $\multialg{\mathfrak B}$. Clearly $P \in \multialg{\mathfrak B, \mathfrak J}$, and it is easy to see\footnote{The embedding $\iota$ extends to a strictly continuous, unital $\ast$-homomorphism $\iota \colon \multialg{\mathfrak B} \to \multialg{\mathfrak J}$, and if $s_1,s_2,\dots$ are isometries in $\multialg{\mathfrak B}$ defining an infinite repeat, then $\iota(s_1),\iota(s_2),\dots$ are isometries in $\multialg{\mathfrak J}$ which induce an infinite repeat by strict continuity.} that $\iota(P)$ is an infinite repeat of $\iota(Q)$ in $\multialg{\mathfrak J}$. Thus it follows from a result of Brown \cite[Lemma 2.5]{Brown-stableiso}, that $\iota(P)$ is Murray von-Neumann equivalent to $1_{\multialg{\mathfrak J}}$. As $\iota(\multialg{\mathfrak B, \mathfrak J})$ is a hereditary $C^\ast$-subalgebra of $\multialg{\mathfrak J}$, it follows that $P$ is full in $\multialg{\mathfrak B, \mathfrak J}$.
\end{proof}

For a positive element $x$ in a $C^\ast$-algebra, we let $(x-\epsilon)_+ := g_\epsilon(x)$ defined by functional calculus, where $g_\epsilon \colon [0,\infty) \to [0,\infty)$ is given by $g_\epsilon(t) = \max\{ 0, t-\epsilon\}$.

\begin{lemma}\label{l:almostfullideals}
Let $\mathfrak B$ be a separable, stable $C^\ast$-algebra, $x\in \multialg{\mathfrak B}$ be a positive element, and let $x_\infty$ denote an infinite repeat of $x$. For any $\epsilon >0$, let $\mathfrak J_\epsilon = \overline{\mathfrak B (x-\epsilon)_+ \mathfrak B}$. Then the ideal $\overline{\multialg{\mathfrak B}x_\infty \multialg{\mathfrak B}}$ contains the ideal $\multialg{\mathfrak B, \mathfrak J_\epsilon}$.
\end{lemma}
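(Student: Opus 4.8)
The plan is to reduce the inclusion to a single Cuntz comparison and then exploit the spectral gap at $\epsilon$. Write $\mathfrak J:=\mathfrak J_\epsilon$, $d:=(x-\epsilon)_+$ and $\mathcal I:=\overline{\multialg{\mathfrak B}\,x_\infty\,\multialg{\mathfrak B}}$. First I would record that $d\in\multialg{\mathfrak B,\mathfrak J}$ and that $d$ is full in $\mathfrak J$: for $b\in\mathfrak B$ we have $(db)^\ast(db)=b^\ast d^2b\in\overline{\mathfrak B d^2\mathfrak B}=\mathfrak J$, and since $c^\ast c\in\mathfrak J$ forces $c\in\mathfrak J$ this gives $d\mathfrak B\subseteq\mathfrak J$; applying the same to $d^{1/2}$ yields $\mathfrak B d^2\mathfrak B=(\mathfrak B d^{1/2})\,d\,(d^{1/2}\mathfrak B)\subseteq\mathfrak J d\mathfrak J$, whence $\overline{\mathfrak J d\mathfrak J}=\overline{\mathfrak B d^2\mathfrak B}=\mathfrak J$. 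As $\mathfrak B$ is separable, $\mathfrak J$ is $\sigma$-unital, so Lemma~\ref{l:fullproj} provides a full projection $P\in\multialg{\mathfrak B,\mathfrak J}$. Since $\mathcal I$ is a closed two-sided ideal of $\multialg{\mathfrak B}$, it then suffices to prove $P\in\mathcal I$, because fullness of $P$ gives $\multialg{\mathfrak B,\mathfrak J}=\overline{\multialg{\mathfrak B,\mathfrak J}\,P\,\multialg{\mathfrak B,\mathfrak J}}\subseteq\mathcal I$.

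Next I would bring in the gap. Put $c:=(x-\epsilon/2)_+$. Infinite repeats commute with continuous functional calculus vanishing at $0$, so $(x_\infty-\epsilon/2)_+=c_\infty$; writing $(x_\infty-\epsilon/2)_+=x_\infty\,g(x_\infty)$ for a suitable continuous $g$ shows $c_\infty\in\mathcal I$. Thus it is enough to establish $P\precsim c_\infty$ in $\multialg{\mathfrak B}$, for then $P\in\overline{\multialg{\mathfrak B}\,c_\infty\,\multialg{\mathfrak B}}\subseteq\mathcal I$. The decisive feature of $c$ is the estimate $cd=(x-\epsilon/2)(x-\epsilon)_+\ge\tfrac{\epsilon}{2}(x-\epsilon)_+=\tfrac{\epsilon}{2}d$: on the part of $\mathfrak B$ that the full element $d$ detects, $c$ is bounded below by $\epsilon/2$. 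It is essential to keep $c$ (hence $x_\infty$) here rather than passing to $(x_\infty-\epsilon)_+$: the norm of $d$ may decay, and this uniform lower bound, which is exactly what the gap provides, would then be lost.

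The core of the argument is the comparison $P\precsim c_\infty$, which I would prove via Brown's Lemma~2.5 \cite[Lemma 2.5]{Brown-stableiso}. Recall that the embedding $\iota\colon\multialg{\mathfrak B,\mathfrak J}\hookrightarrow\multialg{\mathfrak J}$ from the proof of Lemma~\ref{l:fullproj} is injective with hereditary image, that $P$ is an infinite repeat so that $\iota(P)\sim1_{\multialg{\mathfrak J}}$, and that every full-in-$\mathfrak J$ projection has infinite repeat equivalent to $1_{\multialg{\mathfrak J}}$. The plan is to produce a projection $R\in\multialg{\mathfrak B,\mathfrak J}$ that is full in $\mathfrak J$ and satisfies $R\precsim c_\infty$; the uniform bound $c\ge\tfrac{\epsilon}{2}$ on the $\mathfrak J$-full part coming from $d$, amplified over the infinitely many orthogonal summands $s_k c s_k^\ast$ of $c_\infty$, should give enough room to build $R$ inside $\overline{c_\infty\multialg{\mathfrak B}c_\infty}$ by rerunning the Hilbert-module construction of Lemma~\ref{l:fullproj}. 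Granting such an $R$, one has $R_\infty\precsim(c_\infty)_\infty\sim c_\infty$, while $\iota(R_\infty)\sim1_{\multialg{\mathfrak J}}\sim\iota(P)$; transporting this equivalence back through the hereditary embedding $\iota$ gives $P\sim R_\infty$ in $\multialg{\mathfrak B}$, and therefore $P\precsim c_\infty$, as required.

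The step I expect to be the main obstacle is precisely the construction of the full-in-$\mathfrak J$ projection $R$ with $R\precsim c_\infty$. Two features make it delicate. First, $\mathfrak J$ may contain no projections, so $R$ cannot be found inside $\mathfrak J$; it must be manufactured at the multiplier level, where the stability of $\mathfrak B$ together with Kasparov's stabilisation theorem (as in Lemma~\ref{l:fullproj}) produces the needed projections. Second, the spectral gap at $\epsilon$ is indispensable: an infinite repeat of any element full in $\mathfrak J$ is automatically full in $\mathfrak J$ again, but this is far too weak to dominate the multiplier projection $P$; it is the uniform lower bound $c\ge\tfrac{\epsilon}{2}$ on a $\mathfrak J$-full part that upgrades $c_\infty$ to something comparable to $P$. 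Once $P\precsim c_\infty$ is in hand, the reductions of the first two paragraphs finish the proof.
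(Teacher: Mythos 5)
Your reductions are sound, and they in fact run parallel to the paper's own: both arguments invoke Lemma \ref{l:fullproj} to obtain a full projection $P\in\multialg{\mathfrak B,\mathfrak J_\epsilon}$, both reduce the lemma to showing that $P$ lies in the ideal generated by $x_\infty$, and both exploit the spectral gap at $\epsilon$ (the paper via the identity $f_\epsilon(x)\,(x-\epsilon)_+^{1/2}=(x-\epsilon)_+^{1/2}$, you via $c=(x-\epsilon/2)_+$ dominating a multiple of $(x-\epsilon)_+$). The surrounding bookkeeping — $c_\infty\in\overline{\multialg{\mathfrak B}x_\infty\multialg{\mathfrak B}}$, the passage $R\precsim c_\infty\Rightarrow R_\infty\precsim(c_\infty)_\infty\sim c_\infty$, and the transport of Murray--von Neumann equivalences through the hereditary embedding $\iota$ — can all be made to work. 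But the proposal has a genuine gap exactly where you flag it: the construction of a projection $R\in\multialg{\mathfrak B,\mathfrak J_\epsilon}$, full in $\mathfrak J_\epsilon$, with $R\precsim c_\infty$. This is not a loose end; it is the entire content of the lemma. Moreover, the suggested remedy — ``rerunning the Hilbert-module construction of Lemma \ref{l:fullproj} inside $\overline{c_\infty\multialg{\mathfrak B}c_\infty}$'' — does not obviously work: Kasparov's stabilisation theorem produces a projection realising $\mathfrak J_\epsilon$ as an orthogonal summand of the Hilbert module $\mathfrak B$, but it gives no control whatsoever over where that projection sits relative to a prescribed hereditary subalgebra of $\multialg{\mathfrak B}$. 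Localising it there is precisely the problem you set out to solve, so the argument is circular at its core step.

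What fills this gap in the paper is a hands-on strict-convergence construction for which your outline has no substitute. Writing $y=(x-\epsilon)_+$ and fixing a strictly positive $h\in\mathfrak B$, one recursively produces $a_1,a_2,\dots\in\mathfrak B$ (using that $Ph\in\mathfrak J_\epsilon$ and that $y$ is full in $\mathfrak J_\epsilon$) so that, with $b_i=a_iP$, the sum $\sum_i b_i^\ast y b_i$ converges strictly to $P$; one then shows — this needs its own argument, via the auxiliary strictly positive element $h_0=\sum_k 2^{-k}s_k h s_k^\ast$ — that $d:=\sum_i s_i y^{1/2}b_i$ converges strictly, and computes
\[
d^\ast f_\epsilon(x_\infty)d=\sum_i b_i^\ast y^{1/2}f_\epsilon(x)y^{1/2}b_i=\sum_i b_i^\ast y b_i=P,
\]
using the gap identity $f_\epsilon(x)y^{1/2}=y^{1/2}$. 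This exhibits $P$ directly in the ideal generated by $f_\epsilon(x_\infty)$, hence by $x_\infty$, in one stroke: no Cuntz comparison, no auxiliary projection $R$, and only one application of Brown's lemma (inside Lemma \ref{l:fullproj}) are needed. If you want to keep your architecture, the honest way to manufacture $R$ is to carry out essentially this same recursion — at which point $R$ and the comparison machinery become redundant.
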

\begin{proof}
Fix an $\epsilon >0$, and let $f_\epsilon\colon [0,\infty) \to [0,\infty)$ be the continuous function
\[
f_\epsilon(t) = \left\{ \begin{array}{ll} 0, & t=0 \\ 1, & t\geq \epsilon \\ \textrm{affine, } & 0 \leq t \leq \epsilon. \end{array} \right.
\]
Let $y = (x-\epsilon)_+$ and $\mathfrak J_\epsilon=\overline {\mathfrak By\mathfrak B}$. As $\mathfrak B$ is separable, $\mathfrak J_\epsilon$ is $\sigma$-unital, so we may fix a full projection $P\in \multialg{\mathfrak B, \mathfrak J_\epsilon}$ by Lemma \ref{l:fullproj}. Fix a strictly positive element $h\in \mathfrak B$ of norm 1. We wish to recursively construct integers $n_1 < n_2 < \dots < n_k$ and $a_1,\dots, a_{n_k} \in \mathfrak B$ such that if $z_k = \sum_{i=1}^{n_k} a_i^\ast y a_i$ then $\| z_k \| \leq 1$ and $\|(1-z_k)Ph\| < 1/k$.

Suppose we have constructed such up to the stage $k$. As $(1-z_k)^{1/2}Ph \in \mathfrak J_\epsilon$, we may pick a positive element $e\in \mathfrak J_\epsilon$ with $\| e \| < 1$, such that $\| (1-e) (1-z_k)^{1/2} Ph \| < 1/(k+1)$. Let $\delta>0$ be small enough so that $\| e \| + \delta \leq 1$ and $\| (1-e) (1-z_k)^{1/2} Ph \| +\delta< 1/(k+1)$. We may find $m\in \mathbb N$, and $c_1,\dots, c_m\in \mathfrak B$ such that $z' := \sum_{i=1}^m c_i^\ast y c_i \approx_\delta e$. In particular, $\| z' \| \leq 1$ and 
\[
\| (1-z_k)^{1/2} (1-z') (1-z_k)^{1/2} Ph\| < 1/(k+1).
\]
Letting $n_{k+1} = n_k + m$ and $a_{n_k + i} = c_i(1-z_k)^{1/2}$ does the trick.

Then $\| hP( 1- \sum_{i=1}^{n_k} a_i^\ast y a_i) P h\|  \to 0$ for $k \to \infty$. As $hP(1-\sum_{i=1}^n a_i^\ast y a_i) Ph$ is monotonely decreasing and has a subsequence tending to $0$, the sequence itself will tend to zero for $n\to \infty$. Thus $\| (P- \sum_{i=1}^n b_i^\ast y b_i)h\| \to 0$ for $n\to \infty$ where $b_i := a_i P$. As $h$ is strictly positive it follows that $\sum_{i=1}^\infty b_i^\ast y b_i$ converges strictly to $P$.

Let $s_1,s_2,\dots$ be isometries in $\multialg{\mathfrak B}$ such that $\sum_{i=1}^\infty s_is_i^\ast$ converges strictly to $1_{\multialg{\mathfrak B}}$. Then (up to unitary equivalence) $x_\infty = \sum_{i=1}^\infty s_i x s_i^\ast$, and $f_\epsilon (x_\infty) = \sum_{i=1}^\infty s_i f_\epsilon(x) s_i^\ast$. 

Define the element $d = \sum_{i = 1}^\infty s_i y^{1/2} b_i$ (strict convergence). We check that this is well-defined, i.e.~that $\sum_{i = 1}^\infty s_i y^{1/2} b_i$ converges strictly. To see that $dh$ converges, note that
\[
\| \sum_{i=n}^m s_i y^{1/2} b_i h\| = \| h \sum_{i=n}^m b_i^\ast y b_i h \|^{1/2} \to 0, \quad \text{for } n,m \to \infty.
\]
Thus $\sum_{i = 1}^n s_i y^{1/2} b_i h$ is a Cuachy sequence, and thus converges. As $h$ is strictly positive, $\sum_{i = 1}^\infty s_i y^{1/2} b_i b$ converges for every $b\in \overline{h\mathfrak B} = \mathfrak B$.

Similarly, let $h_0 := \sum_{k=1}^\infty 2^{-k} s_k h s_k^\ast$. We get that
\[
\| h_0 \sum_{k=n}^m s_k y^{1/2} b_k \| = \| \sum_{k=n}^{m} 2^{-k} s_k h y^{1/2} b_i\| \leq \sum_{k=n}^m 2^{-k} \| h\| \|b_i^\ast y b_i\|^{1/2} \leq \sum_{k=n}^m 2^{-k} \|h\| \to 0
\]
for $n,m\to \infty$. Here we used that $b_i^\ast y b_i \leq \sum_{k=1}^{\infty} b_k^\ast y b_k \leq 1$. As above, $b \sum_{i = 1}^\infty s_i y^{1/2} b_i$ converges for $b\in \overline{B h_0}$. Thus, if $h_0$ is strictly positive, it will follow that $d$ is well-defined.

To see that $h_0$ is strictly positive, let $\delta>0$, pick $N\in \mathbb N$ such that $\sum_{j,k=1}^\infty s_j s_j^\ast h s_ks_k^\ast \approx_{\delta} \sum_{j,k=1}^N s_j s_j^\ast h s_ks_k^\ast$, and pick $c_{j,k}\in \mathfrak B$ such that $c_{j,k}h  \approx_{\delta/N^2} s_j^\ast h s_k$, which is doable as $h$ is strictly positive. Then 
\begin{eqnarray*}
h &=& \left( \sum_{j=1}^\infty s_js_j^\ast \right) h \left( \sum_{k=1}^\infty s_ks_k^\ast\right) \\
&=& \sum_{j,k=1}^\infty s_js_j^\ast h s_ks_k^\ast \\
&\approx_{\delta}& \sum_{j,k=1}^N s_j s_j^\ast h s_k s_k^\ast \\
&\approx_{\delta}& \sum_{j,k=1}^N s_j c_{j,k}h s_k^\ast \\
&=& \left(\sum_{j,k=1}^N 2^j s_jc_{j,k} s_k^\ast\right) h_0.
\end{eqnarray*}
As $\delta>0$ was arbitrary, it follows that $h\in \overline{\mathfrak Bh_0}$ which implies that $h_0$ is strictly positive, since $h$ is strictly positive. Hence $d$ is well-defined.

Then, as $f_\epsilon (x) y^{1/2} = y^{1/2}$, we get
\begin{eqnarray*}
d^\ast f_\epsilon (x_\infty) d &=& \left(\sum_{j=1}^\infty b_i^\ast y^{1/2} s_j^\ast \right) \left( \sum_{k=1}^\infty s_k f_\epsilon(x) s_k^\ast \right) \left( \sum_{l=1}^\infty s_l y^{1/2} b_l\right) \\
&=&  \sum_{i=1}^\infty b_i^\ast y^{1/2} f_\epsilon(x) y^{1/2} b_i \\
&=& \sum_{i=1}^\infty b_i^\ast y b_i = P.
\end{eqnarray*}
As $P$ was full in $\multialg{\mathfrak B,\mathfrak J_\epsilon}$, and as $x_\infty$ and $f(x_\infty)$ generate the same ideal in $\multialg{\mathfrak B}$, it follows that $\multialg{\mathfrak B, \mathfrak J_\epsilon} \subseteq \overline{\multialg{\mathfrak B}x_\infty \multialg{\mathfrak B}}$.
\end{proof}

Given any c.p.~map $\phi \colon \mathfrak A \to \mathfrak B$ with $\mathfrak A$ separable, and $\mathfrak B$ $\sigma$-unital and stable, Kasparov showed in \cite[Theorem 3]{Kasparov-Stinespring} that there is a Stinespring-type dilation, in the sense that there is a $\ast$-homomorphism $\Phi \colon \mathfrak A \to \multialg{\mathfrak B}$ and an element $V\in \multialg{\mathfrak B}$ such that $V^\ast \Phi(-) V = \phi$. The pair $(\Phi,V)$ is called the \emph{Kasparov--Stinespring dilation} of $\phi$, and the construction could be done as follows:

Construct the (right) Hilbert $\mathfrak B$-module $E:= \mathfrak A \otimes_\phi \mathfrak B$, by defining a pre-inner product on the algebraic tensor product $\mathfrak A \otimes_\mathbb{C} \mathfrak B$ given on elementary tensors by $\langle a_1 \otimes b_1, a_2 \otimes b_2\rangle =b_1^\ast\phi(a_1^\ast a_2) b_2$, quotienting out lenght zero vectors, and completing to obtain $E$. Let $\tilde \Phi \colon \mathfrak A \to \mathbb B(E) \subseteq \mathbb B(E\oplus \mathfrak B)$ be the $\ast$-homomorphism given by left multiplication on the left tensors. As $\mathfrak B$ is stable, $\mathfrak B \cong \mathfrak B \otimes \ell^2(\mathbb N)$ as Hilbert $\mathfrak B$-modules. Thus, by Kasparov's stabilisation theorem \cite[Theorem 2]{Kasparov-Stinespring}, there is a unitary $u\in \mathbb B(\mathfrak B , E\oplus \mathfrak B)$. We let
\[
\Phi := u^\ast \tilde \Phi(-) u \colon \mathfrak A \to \mathbb B(\mathfrak B) = \multialg{\mathfrak B}.
\]
If $W \in \mathbb B(\mathfrak B , E\oplus \mathfrak B)$ is the adjointable operator $W(b) = (1\otimes b, 0)$, and $V := u^\ast W$, then $V^\ast \Phi(-) V = \phi$.

Whenever $\mathsf X$ acts on $\mathfrak B$, there is an induced action on $\multialg{\mathfrak B}$ and $\corona{\mathfrak B}$ given by
\[
\multialg{\mathfrak B}(\mathsf U) := \multialg{\mathfrak B, \mathfrak B(\mathsf U)}, \qquad \corona{\mathfrak B}(\mathsf U) := \corona{\mathfrak B, \mathfrak B(\mathsf U)}.
\]

\begin{lemma}\label{l:dilationXnuc}
Let $\mathfrak A$ be a separable, exact $\mathsf X$-$C^\ast$-algebra, and $\mathfrak B$ be a $\sigma$-unital, stable $\mathsf X$-$C^\ast$-algebra. Suppose that $\phi \colon \mathfrak A \to \mathfrak B$ is a nuclear, $\mathsf X$-equivariant c.p.~map, and let $(\Phi,V)$ be the Kasparov--Stinespring dilation constructed above. Then $\Phi \colon \mathfrak A \to \multialg{\mathfrak B}$ is nuclear and $\mathsf X$-equivariant.
\end{lemma}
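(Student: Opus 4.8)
The plan is to prove the two assertions — nuclearity and $\mathsf X$-equivariance of $\Phi$ — separately, but both will rest on a single computation of the compressions $b^\ast \Phi(-)b \colon \mathfrak A \to \mathfrak B$ for $b \in \mathfrak B$. For nuclearity I would not attempt to exhibit $\Phi$ directly as a point-norm limit of factorable maps; instead the plan is to show that $\Phi$ is \emph{weakly nuclear} and then invoke Proposition \ref{p:exactvsweaklynuc}. Indeed, since $\mathfrak A$ is exact and $\mathfrak B$ is $\sigma$-unital, weak nuclearity of a c.p.~map $\mathfrak A \to \multialg{\mathfrak B}$ already forces it to be nuclear. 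Thus everything reduces to understanding the maps $b^\ast \Phi(-)b$.

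The key computation is as follows. Recall that $u \in \mathbb B(\mathfrak B, E \oplus \mathfrak B)$ is unitary, that $\Phi = u^\ast \tilde\Phi(-) u$, and that $\tilde\Phi(a)$ acts on $E \oplus \mathfrak B$ by left multiplication by $a$ on the $E$-summand and by $0$ on the $\mathfrak B$-summand. Viewing $b\in\mathfrak B$ as a vector in the Hilbert module $\mathfrak B$ and writing $ub = (\xi_1,\xi_2) \in E \oplus \mathfrak B$, adjointability of $u$ gives
\[
 b^\ast \Phi(a) b = \langle b, u^\ast \tilde\Phi(a) ub\rangle = \langle ub, \tilde\Phi(a)ub\rangle = \langle \xi_1, a\cdot \xi_1\rangle_E .
\]
Approximating $\xi_1 \in E = \mathfrak A \otimes_\phi \mathfrak B$ in norm by finite sums of elementary tensors $\xi_1^{(k)} = \sum_i a_i^{(k)}\otimes b_i^{(k)}$ and expanding the inner product shows
\[
 \langle \xi_1^{(k)}, a\cdot \xi_1^{(k)}\rangle_E = \sum_{i,j} (b_i^{(k)})^\ast \phi\big((a_i^{(k)})^\ast a\, a_j^{(k)}\big) b_j^{(k)},
\]
and, by the Cauchy--Schwarz inequality for Hilbert modules, these converge to $b^\ast \Phi(a) b$ in norm for each fixed $a$. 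Hence the maps $\psi_k := \sum_{i,j}(b_i^{(k)})^\ast \phi((a_i^{(k)})^\ast(-) a_j^{(k)}) b_j^{(k)}$ converge point-norm to $b^\ast \Phi(-) b$. Each $\psi_k$ is precisely a map of the form \eqref{eq:basicmap} built from $\phi$, hence lies in the closed operator convex cone $CP_\nuc(\mathfrak A, \mathfrak B)$ by condition $(3)$ of Definition \ref{d:opconcon}, as $\phi$ is nuclear. Since $CP_\nuc(\mathfrak A, \mathfrak B)$ is point-norm closed, $b^\ast \Phi(-)b$ is nuclear for every $b$, so $\Phi$ is weakly nuclear, and therefore nuclear by Proposition \ref{p:exactvsweaklynuc}.

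For $\mathsf X$-equivariance the same approximants do the work. Fix $\mathsf U \in \mathbb O(\mathsf X)$ and a positive $a \in \mathfrak A(\mathsf U)$. As $\mathfrak A(\mathsf U)$ is an ideal, each $(a_i^{(k)})^\ast a\, a_j^{(k)} \in \mathfrak A(\mathsf U)$, and equivariance of $\phi$ gives $\phi((a_i^{(k)})^\ast a\, a_j^{(k)}) \in \mathfrak B(\mathsf U)$; thus $\psi_k(a) \in \mathfrak B(\mathsf U)$, and passing to the limit yields $b^\ast \Phi(a) b \in \mathfrak B(\mathsf U)$ for every $b \in \mathfrak B$. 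Since $\Phi$ is a $\ast$-homomorphism, $\Phi(a) \geq 0$, so writing $b^\ast \Phi(a) b = (\Phi(a)^{1/2}b)^\ast(\Phi(a)^{1/2}b)$ and using that an ideal contains $c$ whenever it contains $c^\ast c$, we obtain $\Phi(a)^{1/2}\mathfrak B \subseteq \mathfrak B(\mathsf U)$, whence $\Phi(a)\mathfrak B \subseteq \mathfrak B(\mathsf U)$, i.e. $\Phi(a) \in \multialg{\mathfrak B, \mathfrak B(\mathsf U)} = \multialg{\mathfrak B}(\mathsf U)$. As the positive elements span $\mathfrak A(\mathsf U)$ and $\multialg{\mathfrak B}(\mathsf U)$ is a linear subspace, this shows $\Phi$ is $\mathsf X$-equivariant.

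I expect the only real obstacle to be careful bookkeeping in the Hilbert-module picture: correctly identifying $b^\ast \Phi(-)b$ with the inner-product map $\langle \xi_1, (-)\xi_1\rangle_E$, and recognizing its elementary-tensor approximants as the cone generators \eqref{eq:basicmap} associated to $\phi$. Once this is set up, both conclusions are formal: nuclearity follows from point-norm closedness of $CP_\nuc$ together with Proposition \ref{p:exactvsweaklynuc}, and equivariance follows from the equivariance of $\phi$ and the ideal structure of $\mathfrak A(\mathsf U)$ and $\mathfrak B(\mathsf U)$.
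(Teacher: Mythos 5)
Your proposal is correct and follows essentially the same route as the paper's proof: reduce both properties to the compressions $b^\ast \Phi(-)b$, use Proposition \ref{p:exactvsweaklynuc} to convert weak nuclearity into nuclearity, identify the compressions as point-norm limits of maps of the form \eqref{eq:basicmap} built from $\phi$, and conclude by closedness of the relevant operator convex cone. The only (cosmetic) differences are that you approximate the vector $ub \in E \oplus \mathfrak B$ by elementary tensors where the paper approximates the compact operator $(1\oplus 0)ub \in \mathbb K(\mathfrak B, E\oplus\mathfrak B)$ by rank-one sums, and that you verify $\mathsf X$-equivariance by a direct square-root argument with positive elements, whereas the paper runs both properties simultaneously through the closed operator convex cone $CP(\mathsf X;\mathfrak A,\mathfrak B)\cap CP_\nuc(\mathfrak A,\mathfrak B)$ together with the stated characterisation of $\multialg{\mathfrak B,\mathfrak J}$ via compressions.
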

\begin{proof}
An element $x \in \multialg{\mathfrak B}$ is in $\multialg{\mathfrak B, \mathfrak J}$ if and only if $b^\ast x b \in \mathfrak J$ for every $b\in \mathfrak B$.
Thus $\Phi$ is $\mathsf X$-equivariant if and only if $b^\ast \Phi(-) b\colon \mathfrak A \to \mathfrak B$ is $\mathsf X$-equivariant for every $b\in \mathfrak B$. Morever, as $\mathfrak A$ is exact and $\mathfrak B$ is $\sigma$-unital, $\Phi$ is nuclear if and only if it $\Phi$ is weakly nuclear, i.e.~the maps $b^\ast \Phi(-) b$ are nuclear for every $b\in \mathfrak B$, by Proposition \ref{p:exactvsweaklynuc}. Thus it suffices to prove that $b^\ast \Phi (-) b$ is nuclear and $\mathsf X$-equivariant for every $b\in \mathfrak B$. Clearly it suffices to check this latter condition only on for $b$ in dense subset of $\mathfrak B$.

 Note that $b^\ast \Phi(-) b = b^\ast u^\ast (1\oplus 0) \tilde\Phi(-) (1\oplus 0) u b$ for any $b\in \mathfrak B$.
As any element $(1\oplus 0)u b\in \mathbb K(\mathfrak B, E \oplus \mathfrak B)$ can be approximated\footnote{We let $\theta_{x,y} \in \mathbb K(F,F')$ denote the ``rank 1'' operator $\theta_{x,y}(z) = x\langle y,z\rangle$, for $x\in F'$ and $y,z\in F$ ($F$ and $F'$ Hilbert modules), and recall that any element in $\mathbb K(F,F')$ can be approximated by sums of such ``rank 1'' operators.} by an element of the form $T= \sum_{i=1}^n \theta_{(x_i,0),c_i}$, where $x_i = \sum_{k=1}^{m_i} a_k^{(i)} \otimes b_k^{(i)} \in E$ (as such elements are dense in $E$) and $c_i \in \mathfrak B$. Thus it suffices to check that $T^\ast \tilde \Phi(-) T$ is nuclear and $\mathsf X$-equivariant for such $T\in \mathbb K(\mathfrak B , E \oplus \mathfrak B)$.

Observe, that
\begin{equation}\label{eq:innerproduct}
\langle (x_i,0), \tilde \Phi(-) (x_j,0)\rangle = \sum_{k=1}^{m_i} \sum_{l=1}^{m_j} \langle a_k^{(i)} \otimes b_k^{(i)}, (-)a_l^{(j)} \otimes b_l^{(j)} \rangle_E = \sum_{k=1}^{m_i} \sum_{l=1}^{m_j} b_k^{(i) \ast} \phi(a_k^{(i)\ast} (-) a_l^{(j)}) b_l^{(j)}.
\end{equation}
An easy computation shows that $\theta_{x,y}^\ast S \theta_{z,w}= \theta_{y\langle x,Sz\rangle, w}$, so
\begin{eqnarray*}
T^\ast \tilde \Phi(-) T &=& \sum_{i,j=1}^n \theta_{(x_i,0),c_i}^\ast \tilde\Phi(-) \theta_{(x_j,0),c_j} \\
&=& \sum_{i,j=1}^n \theta_{c_i \langle (x_i,0) , \tilde \Phi(-) (x_j,0)\rangle, c_j} \\
&\stackrel{\eqref{eq:innerproduct}}{=}& \sum_{i,j=1}^n \sum_{k=1}^{m_i} \sum_{l=1}^{m_j} \theta_{c_i  b_k^{(i) \ast} \phi(a_k^{(i)\ast} (-) a_l^{(j)}) b_l^{(j)} , c_j}.
\end{eqnarray*}
Under the canonical identification of $\mathbb K(\mathfrak B)$ and $\mathfrak B$, the ``rank 1'' operator $\theta_{d_1,d_2}$ corresponds to $d_1 d_2^\ast$. Thus, the map above, under this identification, is exactly the map
\[
 T^\ast \tilde \Phi(-) T = \sum_{i,j=1}^n \sum_{k=1}^{m_i} \sum_{l=1}^{m_j} c_i  b_k^{(i) \ast} \phi(a_k^{(i)\ast} (-) a_l^{(j)}) b_l^{(j)} c_j^\ast \colon \mathfrak A \to \mathfrak B.
\]
As $\phi$ is nuclear and $\mathsf X$-equivariant, and as the set of nuclear, $\mathsf X$-equivariant c.p.~maps is a closed operator convex cone, it follows that the above map is nuclear and $\mathsf X$-equivariant from Definition \ref{d:opconcon} (3). As $T$ was chosen arbitrarily in a dense subset of $(1\oplus 0)\mathbb K(\mathfrak B, E \oplus \mathfrak B)$, and as $\tilde \Phi = (1\oplus 0) \tilde \Phi(-) (1\oplus 0)$, it follows that $b^\ast \Phi(-) b$ is nuclear and $\mathsf X$-equivariant for all $b\in \mathfrak B$. As seen above, this implies that $\Phi$ is nuclear and $\mathsf X$-equivariant.
\end{proof}

\begin{proposition}\label{p:liftfromcorona}
Let $\mathsf X$ be a topological space, $\mathfrak A$ be a separable, exact, lower semicontinuous $\mathsf X$-$C^\ast$-algebra, and let $\mathfrak B$ be a separable $\mathsf X$-$C^\ast$-algebra with property (UBS). Then any nuclear, $\mathsf X$-equivariant c.p.~map $\eta \colon \mathfrak A \to \corona{\mathfrak B}$ lifts to a nuclear, $\mathsf X$-equivariant c.p.~map $\tilde \eta \colon \mathfrak A \to \multialg{\mathfrak B}$.
\end{proposition}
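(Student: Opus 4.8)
The plan is to deduce this from the abstract lifting result, Proposition~\ref{p:liftingconeversion}, applied to the ideal $\mathfrak B$ of $\multialg{\mathfrak B}$ with quotient map $\pi\colon\multialg{\mathfrak B}\to\corona{\mathfrak B}$. I would take $\mathscr C\subseteq CP_\nuc(\mathfrak A,\multialg{\mathfrak B})$ to be the cone of all nuclear, $\mathsf X$-equivariant c.p.\ maps $\mathfrak A\to\multialg{\mathfrak B}$ for the induced action $\multialg{\mathfrak B}(\mathsf U)=\multialg{\mathfrak B,\mathfrak B(\mathsf U)}$, which is a closed operator convex cone by Remark~\ref{r:Xeqcone}. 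Since $\eta$ is nuclear, Proposition~\ref{p:liftingconeversion} reduces the statement to verifying, for every positive $a\in\mathfrak A$, the single membership
\[
\eta(a)\in\pi\Big(\overline{\multialg{\mathfrak B}\{\psi(a):\psi\in\mathscr C\}\multialg{\mathfrak B}}\Big).
\]
Before doing this I would reduce to the case that $\mathfrak B$ is stable: replacing $\mathfrak B$ by $\mathfrak B\otimes\mathbb K$ preserves separability and, by Proposition~\ref{p:ubstensorD}, Property (UBS); the corner inclusion $\corona{\mathfrak B}\hookrightarrow\corona{\mathfrak B\otimes\mathbb K}$ and the compression $\corona{\mathfrak B\otimes\mathbb K}\to\corona{\mathfrak B}$ are both $\mathsf X$-equivariant and compose to the identity, so an $\mathsf X$-equivariant nuclear lift over $\mathfrak B\otimes\mathbb K$ compresses to one over $\mathfrak B$.

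To produce cone elements with large values, fix a positive $a\in\mathfrak A$. By the observation following the definition of $\mathsf U_a$, equivariance of $\eta$ gives $\eta(a)\in\corona{\mathfrak B}(\mathsf U_a)=\corona{\mathfrak B,\mathfrak B(\mathsf U_a)}=\pi(\multialg{\mathfrak B,\mathfrak B(\mathsf U_a)})$, so it suffices to make the generated ideal contain $\multialg{\mathfrak B,\mathfrak B(\mathsf U_a)}$ modulo $\mathfrak B$. I would build the generator as follows. Proposition~\ref{p:Xfullmaps} supplies a nuclear, $\mathsf X$-equivariant c.p.\ map $\phi\colon\mathfrak A\to\mathfrak B$ with $\overline{\mathfrak B\phi(a)\mathfrak B}=\mathfrak B(\mathsf U_a)$. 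Passing to its Kasparov--Stinespring dilation $(\Phi,V)$ and invoking Lemma~\ref{l:dilationXnuc}, the $\ast$-homomorphism $\Phi\colon\mathfrak A\to\multialg{\mathfrak B}$ is again nuclear and $\mathsf X$-equivariant; its infinite repeat $\psi=\Phi_\infty$ is a nuclear (via weak nuclearity and Proposition~\ref{p:exactvsweaklynuc}), $\mathsf X$-equivariant $\ast$-homomorphism in $\mathscr C$, and $\psi(a)=\Phi(a)_\infty$ is an infinite repeat of the positive multiplier $\Phi(a)$. Two facts then feed in: from $V^\ast\Phi(a)V=\phi(a)$ together with $\Phi(a)\in\multialg{\mathfrak B,\mathfrak B(\mathsf U_a)}$ one gets $\overline{\mathfrak B\Phi(a)\mathfrak B}=\mathfrak B(\mathsf U_a)$; and by Lemma~\ref{l:almostfullideals}, $\overline{\multialg{\mathfrak B}\psi(a)\multialg{\mathfrak B}}$ contains $\multialg{\mathfrak B,\mathfrak J_\epsilon}$ for every $\epsilon>0$, where $\mathfrak J_\epsilon=\overline{\mathfrak B(\Phi(a)-\epsilon)_+\mathfrak B}$ and $\overline{\bigcup_{\epsilon>0}\mathfrak J_\epsilon}=\mathfrak B(\mathsf U_a)$.

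The crux — and the step I expect to be the main obstacle — is to upgrade this ``almost fullness'' to the required membership, i.e.\ to pass from the ideals $\multialg{\mathfrak B,\mathfrak J_\epsilon}$ to $\multialg{\mathfrak B,\mathfrak B(\mathsf U_a)}$ after applying $\pi$. This is genuinely not formal: in general $\corona{\mathfrak B,\mathfrak J_\epsilon}$ is \emph{strictly} smaller than $\corona{\mathfrak B,\mathfrak B(\mathsf U_a)}$, so a naive union over $\epsilon$ will not reach $\eta(a)$ (the obstruction is multipliers whose essential size does not vanish toward the boundary of $\mathsf U_a$). Here I would exploit that $\mathfrak B$ is stable and $\mathfrak B(\mathsf U_a)$ is $\sigma$-unital, so Lemma~\ref{l:fullproj} furnishes a full projection in $\multialg{\mathfrak B,\mathfrak B(\mathsf U_a)}$, to be matched against the full projections produced inside $\multialg{\mathfrak B,\mathfrak J_\epsilon}$ in the proof of Lemma~\ref{l:almostfullideals}. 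Combining this with a lift $m$ of $\eta(a)$ satisfying $m\mathfrak B\subseteq\mathfrak B(\mathsf U_a)=\overline{\bigcup_\epsilon\mathfrak J_\epsilon}$, I would approximate $\pi(m)$ in the corona by elements of $\pi(\multialg{\mathfrak B,\mathfrak J_\epsilon})$ as $\epsilon\to0$, multiplying $m$ by multiplier-level cut-offs of $\Phi(a)$ that act as units on $\mathfrak J_\epsilon$ without collapsing into $\mathfrak B$, and using nuclearity of $\eta$ to control the behaviour near the boundary of $\mathsf U_a$. Once this approximation is carried out, the hypothesis of Proposition~\ref{p:liftingconeversion} holds for every positive $a$, and the desired nuclear, $\mathsf X$-equivariant lift $\tilde\eta\colon\mathfrak A\to\multialg{\mathfrak B}$ exists.
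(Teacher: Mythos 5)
Your setup is the same as the paper's (reduce to stable $\mathfrak B$, take $\mathscr C$ to be the nuclear $\mathsf X$-equivariant maps into $\multialg{\mathfrak B}$, invoke Proposition~\ref{p:liftingconeversion}, build a generator from Proposition~\ref{p:Xfullmaps} via the Kasparov--Stinespring dilation, Lemma~\ref{l:dilationXnuc}, the infinite repeat, and Lemma~\ref{l:almostfullideals}), but the step you flag as ``the crux'' is a genuine gap, and the repair you sketch --- cut-offs of a lift $m$ of $\eta(a)$ by multiplier-level functions of $\Phi(a)$, plus ``nuclearity of $\eta$ to control the boundary'' --- is not an argument and, as you yourself suspect, cannot work in that form: the containment $\multialg{\mathfrak B, \mathfrak B(\mathsf U_a)} \subseteq \overline{\multialg{\mathfrak B}\Phi_\infty(a)\multialg{\mathfrak B}}$ (even after applying $\pi$) is simply false in general, and no amount of massaging a single lift $m$ of $\eta(a)$ will recover it.

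The paper resolves this by moving the approximation from the multiplier side to the domain side, and this requires a stronger choice of $\phi$ than the one you made. First, Proposition~\ref{p:Xfullmaps} is applied not to $a$ alone but to every cut-down $(a-1/n)_+$: one takes contractive nuclear $\mathsf X$-equivariant maps $\phi_n$ with $\overline{\mathfrak B\,\phi_n((a-1/n)_+)\,\mathfrak B} = \mathfrak B(\mathsf U_{(a-1/n)_+})$ and sets $\phi = \sum_n 2^{-n}\phi_n$, so that $\overline{\mathfrak B\,\phi((a-1/n)_+)\,\mathfrak B} = \mathfrak B(\mathsf U_{(a-1/n)_+})$ for \emph{all} $n$ simultaneously. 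After dilating, since $\Phi$ is a $\ast$-homomorphism one has $(\Phi(a)-1/n)_+ = \Phi((a-1/n)_+)$, whence $\mathfrak J_{1/n} := \overline{\mathfrak B(\Phi(a)-1/n)_+\mathfrak B} = \mathfrak B(\mathsf U_{(a-1/n)_+})$ \emph{exactly} --- with your choice of $\phi$ (full only on $a$) you only get the useless inclusion $\mathfrak J_{1/n} \subseteq \mathfrak B(\mathsf U_{(a-1/n)_+})$. Second, instead of trying to place $\eta(a)$ itself, one places each $\eta((a-1/n)_+)$: by $\mathsf X$-equivariance of $\eta$,
\[
\eta((a-1/n)_+) \in \pi\bigl(\multialg{\mathfrak B, \mathfrak B(\mathsf U_{(a-1/n)_+})}\bigr) = \pi\bigl(\multialg{\mathfrak B, \mathfrak J_{1/n}}\bigr) \subseteq \pi\bigl(\overline{\multialg{\mathfrak B}\Phi_\infty(a)\multialg{\mathfrak B}}\bigr),
\]
the last inclusion being exactly Lemma~\ref{l:almostfullideals}. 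The target set here is a \emph{fixed} closed set (the image of a closed ideal under the quotient map $\pi$), independent of $n$, and $\eta((a-1/n)_+) \to \eta(a)$ in norm since $\eta$ is bounded; hence $\eta(a)$ lies in it, which is precisely the hypothesis of Proposition~\ref{p:liftingconeversion}. So the two missing ideas are: (i) make $\phi$ full on all cut-downs $(a-1/n)_+$ at once, and (ii) approximate $a$ by $(a-1/n)_+$ inside $\mathfrak A$ rather than attempting to enlarge the ideal $\multialg{\mathfrak B,\mathfrak J_\epsilon}$ to $\multialg{\mathfrak B,\mathfrak B(\mathsf U_a)}$ in the corona. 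With these two changes the obstruction you identified never arises.
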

\begin{proof}
We start by proving the result under the additional assumption that $\mathfrak B$ is stable.
Let $\mathscr C$ denote the set of all nuclear, $\mathsf X$-equivariant c.p.~maps $\mathfrak A \to \multialg{\mathfrak B}$, which is a closed operator convex cone. By Proposition \ref{p:liftingconeversion}, it suffices to show that 
\begin{equation}\label{eq:liftcorona}
\eta(a) \in \pi\left( \overline{\multialg{\mathfrak B}\{ \psi(a) : \psi \in \mathscr C\} \multialg{\mathfrak B}} \right)
\end{equation}
for every positive $a\in \mathfrak A$. Fix $a\in \mathfrak A_+$. By Proposition \ref{p:Xfullmaps}, there are nuclear, $\mathsf X$-equivariant c.p.~maps $\phi_n \colon \mathfrak A \to \mathfrak B$ for $n\in \mathbb N$, such that $\mathfrak B(\mathsf U_{(a-1/n)_+}) = \overline{\mathfrak B \phi_n((a-1/n)_+)) \mathfrak B}$ for every $n\in \mathbb N$. We may assume that each map $\phi_n$ is contractive. Let $\phi := \sum_{n=1}^\infty 2^{-n} \phi_n$ which is nuclear and $\mathsf X$-equivariant, and for which $\mathfrak B(\mathsf U_{(a-1/n)_+}) = \overline{\mathfrak B \phi((a-1/n)_+) \mathfrak B}$ for $n\in \mathbb N$.

Let $(\Phi,V)$ be the Kasparov--Stinespring dilation of $\phi$. By Lemma \ref{l:dilationXnuc}, $\Phi \colon \mathfrak A \to \multialg{\mathfrak B}$ is a nuclear, $\mathsf X$-equivariant $\ast$-homomorphism. We get $\overline{\mathfrak B \Phi((a-1/n)_+) \mathfrak B} \subseteq \mathfrak B(\mathsf U_{(a-1/n)_+})$ since $\Phi$ is $\mathsf X$-equivariant. Also, 
\[
\overline{\mathfrak B \Phi((a-1/n)_+) \mathfrak B} \subseteq \overline{\mathfrak B V^\ast \Phi((a-1/n)_+) V\mathfrak B} = \overline{\mathfrak B \phi((a-1/n)_+) \mathfrak B} = \mathfrak B(\mathsf U_{(a-1/n)_+}),
\] 
so it follows that $\mathfrak B(\mathsf U_{(a-1/n)_+}) = \overline{\mathfrak B \Phi((a-1/n)_+) \mathfrak B}$ for each $n\in \mathbb N$.

Let $s_1,s_2,\dots\in \multialg{\mathfrak B}$ be isometries such that $\sum_{k=1}^\infty s_k s_k^\ast$ converges strictly to $1_{\multialg{\mathfrak B}}$, and let $\Phi_\infty = \sum_{k=1}^\infty s_k \Phi(-) s_k^\ast$ (convergence strictly). As $\mathfrak A$ is exact, and
\[
b^\ast \Phi_\infty(-) b = \sum_{k=1}^\infty b^\ast s_k \Phi(-) s_k^\ast b \colon \mathfrak A \to \mathfrak B
\]
is nuclear and $\mathsf X$-equivariant for all $b\in \mathfrak B$, it follows from Proposition \ref{p:exactvsweaklynuc} that $\Phi_\infty$ is nuclear and (clearly) $\mathsf X$-equivariant.

Recall, that our goal is to show \eqref{eq:liftcorona}. It suffices to show that 
\[
\eta((a-1/n)_+) \in \pi(\overline{\multialg{\mathfrak B} \Phi_\infty(a) \multialg{\mathfrak B}}),
\]
for every $n \in \mathbb N$. As $\eta$ is $\mathsf X$-equivariant, 
\[
\eta((a-1/n)_+) \in \corona{\mathfrak B}(\mathsf U_{(a-1/n)_+}) = \pi(\multialg{\mathfrak B, \mathfrak B(\mathsf U_{(a-1/n)_+})}).
\]
So it suffices to show that $\multialg{\mathfrak B, \mathfrak B(\mathsf U_{(a-1/n)_+})} \subseteq \overline{\multialg{\mathfrak B} \Phi_\infty(a) \multialg{\mathfrak B}}$ for every $n\in \mathbb N$. However, as
\[
\overline{\mathfrak B (\Phi(a) - 1/n)_+ \mathfrak B} = \overline{\mathfrak B \Phi((a-1/n)_+) \mathfrak B} = \mathfrak B(\mathsf U_{(a-1/n)_+}),
\]
this follows from Lemma \ref{l:almostfullideals}, and finishes the proof, under the assumption that $\mathfrak B$ is stable.

If $\mathfrak B$ is not stable, consider the nuclear, $\mathsf X$-equivariant map $\eta \otimes e_{11} \colon \mathfrak A \to \corona{\mathfrak B} \otimes \mathbb K \hookrightarrow \corona{\mathfrak B \otimes \mathbb K}$. By what we proved above, this lifts to a nuclear, $\mathsf X$-equivariant map $\tilde \eta' \colon \mathfrak A \to \multialg{\mathfrak B\otimes \mathbb K}$. Let $P=1_{\multialg{\mathfrak B}} \otimes e_{11}$. The map $\tilde \eta := P \tilde \eta' (-)P \colon \mathfrak A \to \multialg{\mathfrak B} \otimes e_{11} = \multialg{\mathfrak B}$ is a nuclear and $\mathsf X$-equivariant lift of $\eta$.
\end{proof}

An extension of $\mathsf X$-$C^\ast$-algebras is a short exact sequence $0 \to \mathfrak B \to \mathfrak E \to \mathfrak D \to 0$ in the category of $\mathsf X$-$C^\ast$-algebras such that for every open subset $\mathsf U$ of $\mathsf X$, the sequence
$0 \to \mathfrak B(\mathsf U) \to \mathfrak E(\mathsf U) \to \mathfrak D(\mathsf U) \to 0$ is a short exact sequence.

When $0\to \mathfrak B \to \mathfrak E \to \mathfrak D \to 0$ is an extension of $C^\ast$-algebras, and $\tau \colon \mathfrak D \to \corona{\mathfrak B}$ is the Busby map, we can construct the pull-back
\[
 \multialg{\mathfrak B} \oplus_{\corona{\mathfrak B}} \mathfrak D := \{ (x,d) \in \multialg{\mathfrak B} \oplus \mathfrak D : \pi(x) = \tau(d)\}.
\] 
It is well-known that $(\sigma,p)\colon \mathfrak E \xrightarrow{\cong} \multialg{\mathfrak B}\oplus_{\corona{\mathfrak B}} \mathfrak D$ is an isomorphism, where $p\colon \mathfrak E \to \mathfrak D$ is the quotient map, and $\sigma\colon \mathfrak E \to \multialg{\mathfrak B}$ is the canonical $\ast$-homomorphism.

It was shown in \cite[Proposition 5.20]{GabeRuiz-absrep}, that $0 \to \mathfrak B \to \mathfrak E \to \mathfrak D \to 0$ is an extension of $\mathsf X$-$C^\ast$-algebras if and only if $\tau$ is $\mathsf X$-equivariant and $(\sigma,p) \colon \mathfrak E \xrightarrow \cong \multialg{\mathfrak B}\oplus_{\corona{\mathfrak B}} \mathfrak D$ is an isomorphism of $\mathsf X$-$C^\ast$-algebras (i.e.~the map \emph{and its inverse} are $\mathsf X$-equivariant). Here we equipped the pull-back with the action
\[
(\multialg{\mathfrak B}\oplus_{\corona{\mathfrak B}} \mathfrak D)(\mathsf U) := (\multialg{\mathfrak B}\oplus_{\corona{\mathfrak B}} \mathfrak D) \cap (\multialg{\mathfrak B}(\mathsf U) \oplus \mathfrak D(\mathsf U)), \qquad \mathsf U\in \mathbb O(\mathsf X),
\]
which is well-defined whenever $\tau$ is $\mathsf X$-equivariant. We fill in the proof for completion.

\begin{proposition}\label{p:Xextensions}
Let $0 \to \mathfrak B \to \mathfrak E \to \mathfrak D \to 0$ be a short exact sequence in the category of $\mathsf X$-$C^\ast$-algebras. The sequence is an extension of $\mathsf X$-$C^\ast$-algebras if and only if the Busby map $\tau$ is $\mathsf X$-equivariant and the canonical isomorphism $\mathfrak E \to \multialg{\mathfrak B}\oplus_{\corona{\mathfrak B}} \mathfrak D$ is an isomorphism of $\mathsf X$-$C^\ast$-algebras.
\end{proposition}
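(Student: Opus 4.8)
The plan is to work throughout with the canonical identification $(\sigma,p)\colon \mathfrak E \xrightarrow{\cong} \multialg{\mathfrak B}\oplus_{\corona{\mathfrak B}} \mathfrak D$ and to reduce everything to two elementary facts about the multiplier ideals $\multialg{\mathfrak B}(\mathsf U) = \multialg{\mathfrak B, \mathfrak B(\mathsf U)}$. First, since the multiplier action of $e \in \mathfrak E$ on $\mathfrak B$ is just multiplication inside $\mathfrak E$, one has $\sigma(e)\mathfrak B = e\mathfrak B$, so by definition of $\multialg{\mathfrak B,\mathfrak B(\mathsf U)}$ we get $\sigma(e) \in \multialg{\mathfrak B}(\mathsf U)$ if and only if $e\mathfrak B \subseteq \mathfrak B(\mathsf U)$. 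Second, for an element $b$ already lying in $\mathfrak B$, multiplying by an approximate unit of $\mathfrak B$ and using that $\mathfrak B(\mathsf U)$ is closed shows $b\mathfrak B \subseteq \mathfrak B(\mathsf U)$ if and only if $b \in \mathfrak B(\mathsf U)$; equivalently $\multialg{\mathfrak B}(\mathsf U) \cap \mathfrak B = \mathfrak B(\mathsf U)$. I would record these two observations at the outset.

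For the ``only if'' direction, assume the sequence is an extension of $\mathsf X$-$C^\ast$-algebras, so that $\mathfrak E(\mathsf U)\cap \mathfrak B = \mathfrak B(\mathsf U)$ (exactness at $\mathfrak E(\mathsf U)$) and $p(\mathfrak E(\mathsf U)) = \mathfrak D(\mathsf U)$ (surjectivity) for every $\mathsf U$. For $e \in \mathfrak E(\mathsf U)$ one has $e\mathfrak B \subseteq \mathfrak E(\mathsf U)\cap \mathfrak B = \mathfrak B(\mathsf U)$, so the first fact gives $\sigma(e)\in\multialg{\mathfrak B}(\mathsf U)$; together with equivariance of $p$ this shows $\sigma$ is $\mathsf X$-equivariant, and then $\tau(p(e)) = \pi(\sigma(e)) \in \corona{\mathfrak B}(\mathsf U)$ combined with surjectivity of $p$ onto $\mathfrak D(\mathsf U)$ gives that $\tau$ is $\mathsf X$-equivariant. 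It remains to check that $(\sigma,p)$ carries $\mathfrak E(\mathsf U)$ \emph{onto} the $\mathsf U$-ideal of the pull-back. Given $(x,d)$ in the pull-back with $x\in\multialg{\mathfrak B}(\mathsf U)$ and $d\in\mathfrak D(\mathsf U)$, I would use surjectivity to pick $e'\in\mathfrak E(\mathsf U)$ with $p(e') = d$; then $e := (\sigma,p)^{-1}(x,d)$ satisfies $e - e' \in \mathfrak B$ and $\sigma(e-e') = x - \sigma(e') \in \multialg{\mathfrak B}(\mathsf U)$, so by the second fact $e - e' \in \mathfrak B(\mathsf U)\subseteq \mathfrak E(\mathsf U)$ and hence $e \in \mathfrak E(\mathsf U)$. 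This yields the required equality of ideals, so $(\sigma,p)$ is an isomorphism of $\mathsf X$-$C^\ast$-algebras.

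For the ``if'' direction, assume $\tau$ is $\mathsf X$-equivariant and $(\sigma,p)$ is an $\mathsf X$-$C^\ast$-isomorphism, i.e.\ $(\sigma,p)(\mathfrak E(\mathsf U))$ equals the $\mathsf U$-ideal of the pull-back for each $\mathsf U$, so that $e \in \mathfrak E(\mathsf U)$ precisely when $\sigma(e)\in\multialg{\mathfrak B}(\mathsf U)$ and $p(e)\in\mathfrak D(\mathsf U)$. Exactness at $\mathfrak E(\mathsf U)$ amounts to $\mathfrak E(\mathsf U)\cap\mathfrak B = \mathfrak B(\mathsf U)$: the inclusion $\supseteq$ is equivariance of $\mathfrak B\hookrightarrow\mathfrak E$, and for $b\in\mathfrak E(\mathsf U)\cap\mathfrak B$ the characterization gives $\sigma(b)\in\multialg{\mathfrak B}(\mathsf U)$, whence $b\in\mathfrak B(\mathsf U)$ by the second fact. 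For surjectivity $p(\mathfrak E(\mathsf U)) = \mathfrak D(\mathsf U)$, given $d\in\mathfrak D(\mathsf U)$ I would use $\mathsf X$-equivariance of $\tau$ to lift $\tau(d)\in\corona{\mathfrak B}(\mathsf U)$ to some $x\in\multialg{\mathfrak B}(\mathsf U)$ with $\pi(x) = \tau(d)$; then $(x,d)$ lies in the $\mathsf U$-ideal of the pull-back, so $e := (\sigma,p)^{-1}(x,d)\in\mathfrak E(\mathsf U)$ and $p(e) = d$. Injectivity of $\mathfrak B(\mathsf U)\to\mathfrak E(\mathsf U)$ is automatic, completing exactness.

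I would expect no deep obstacle here: the whole argument is a bookkeeping translation between the ideal $\mathfrak E(\mathsf U)$ and multiplier-level conditions, resting on the two facts above. The one point requiring genuine care is the surjectivity half, where producing a preimage in $\mathfrak E(\mathsf U)$ forces one to lift along the Busby map using precisely the equivariance of $\tau$; this is where both hypotheses (and not merely the isomorphism) are actually needed, and where the pull-back description earns its keep.
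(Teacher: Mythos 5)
Your proof is correct, and its overall skeleton matches the paper's: both directions reduce to showing that the canonical map $(\sigma,p)$ carries $\mathfrak E(\mathsf U)$ exactly onto the $\mathsf U$-ideal of the pull-back, with the forward direction first establishing equivariance of $\sigma$ and $\tau$. The difference lies in how that matching of ideals is established. The paper's proof goes through the identity \eqref{eq:pullbackU}, whose nontrivial half --- identifying the pull-back $\multialg{\mathfrak B}(\mathsf U)\oplus_{\corona{\mathfrak B}(\mathsf U)}\mathfrak D(\mathsf U)$ with $\multialg{\mathfrak B(\mathsf U)}\oplus_{\corona{\mathfrak B(\mathsf U)}}\mathfrak D(\mathsf U)$ --- is justified by uniqueness of pull-backs and left to the reader, and it then invokes the canonical isomorphism attached to the subextension $0\to\mathfrak B(\mathsf U)\to\mathfrak E(\mathsf U)\to\mathfrak D(\mathsf U)\to 0$. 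You never leave $\multialg{\mathfrak B}$ at all: your two elementary multiplier facts (that $\sigma(e)\in\multialg{\mathfrak B,\mathfrak B(\mathsf U)}$ iff $e\mathfrak B\subseteq\mathfrak B(\mathsf U)$, and that $\multialg{\mathfrak B,\mathfrak B(\mathsf U)}\cap\mathfrak B=\mathfrak B(\mathsf U)$), combined with element chases, replace both the pull-back--uniqueness identification and the implicit compatibility check between the restriction of $(\sigma,p)$ and the canonical isomorphism of the subextension. What your route buys is that it is more elementary and fully self-contained, at the cost of length; what the paper's buys is brevity and a reusable identity, which it cites again for the converse. You also pinpoint correctly that equivariance of $\tau$ is needed precisely to lift elements of $\corona{\mathfrak B}(\mathsf U)$ to $\multialg{\mathfrak B,\mathfrak B(\mathsf U)}$ in the surjectivity step of the converse --- the same place the paper uses it, implicitly, through \eqref{eq:pullbackU}.
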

\begin{proof}
If the sequence is an extension of $\mathsf X$-$C^\ast$-algebras, then $\mathfrak B(\mathsf U) = \mathfrak B \cdot \mathfrak E(\mathsf U)$, and $\mathfrak D(\mathsf U) = p(\mathfrak E(\mathsf U))$, where $p\colon \mathfrak E \to \mathfrak D$ is the quotient map. It follows that $\sigma \colon \mathfrak E \to \multialg{\mathfrak B}$ is $\mathsf X$-equivariant and thus $\tau$ is also $\mathsf X$-equivariant. We have 
\begin{equation}\label{eq:pullbackU}
(\multialg{\mathfrak B}\oplus_{\corona{\mathfrak B}} \mathfrak D)(\mathsf U) = \multialg{\mathfrak B}(\mathsf U)\oplus_{\corona{\mathfrak B}(\mathsf U)} \mathfrak D(\mathsf U)  \stackrel{(\ast)}{=} \multialg{\mathfrak B(\mathsf U)} \oplus_{\corona{\mathfrak B(\mathsf U)}} \mathfrak D(\mathsf U),
\end{equation}
for $\mathsf U\in \mathbb O(\mathsf X)$, where $(\ast)$ is easily verified, e.g.~by uniqueness of pull-backs, and is left for the reader. It follows that the isomorphism $\mathfrak E \xrightarrow \cong \multialg{\mathfrak B}\oplus_{\corona{\mathfrak B}} \mathfrak D$ restricts to an isomorphism $\mathfrak E(\mathsf U) \xrightarrow \cong (\multialg{\mathfrak B}\oplus_{\corona{\mathfrak B}} \mathfrak D)(\mathsf U)$ for every $\mathsf U$. Thus it is an isomorphism of $\mathsf X$-$C^\ast$-algebras.

Conversely, suppose $\tau$ is $\mathsf X$-equivariant, and $\mathfrak E \to \multialg{\mathfrak B}\oplus_{\corona{\mathfrak B}} \mathfrak D$ is an isomorphism of $\mathsf X$-$C^\ast$-algebras. As \eqref{eq:pullbackU} holds, it follows that $0\to \mathfrak B(\mathsf U) \to \mathfrak E(\mathsf U) \to \mathfrak D(\mathsf U) \to 0$ is exact, so $0 \to \mathfrak B \to \mathfrak E \to \mathfrak D \to 0$ is an extension of $\mathsf X$-$C^\ast$-algebras.
\end{proof}

We can now prove an ideal related lifting theorem. Part $(i)$ in the theorem is an $\mathsf X$-equivariant version of the Choi--Effros lifting theorem \cite{ChoiEffros-lifting}, and part $(ii)$ is an $\mathsf X$-equivariant version of the Effros--Haagerup lifting theorem \cite{EffrosHaagerup-lifting}.

\begin{theorem}\label{t:Xlifting}
Let $\mathsf X$ be a topological space, let $0 \to \mathfrak B \to \mathfrak E \to \mathfrak D\to 0$ be an extension of $\mathsf X$-$C^\ast$-algebras, for which $\mathfrak B$ is separable and has Property (UBS) (in particular, $\mathfrak B$ could be upper semicontinuous and nuclear), and let $\mathfrak A$ be a separable, exact, lower semicontinuous $\mathsf X$-$C^\ast$-algebra. Let $\phi\colon \mathfrak A \to \mathfrak D$ be an $\mathsf X$-equivariant c.p.~map.
If one of the following hold:
\begin{itemize}
\item[$(i)$] $\phi$ is nuclear,
\item[$(ii)$] $\mathfrak E$ is exact and $\mathfrak B$ is nuclear,
\end{itemize}
then there exists an $\mathsf X$-equivariant c.p.~lift $\tilde \phi \colon \mathfrak A \to \mathfrak E$.
\end{theorem}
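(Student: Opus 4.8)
The plan is to reduce the lifting problem to lifting a c.p.\ map into the multiplier algebra $\multialg{\mathfrak B}$ and then to invoke Proposition~\ref{p:liftfromcorona}. By Proposition~\ref{p:Xextensions} I may identify $\mathfrak E$ with the pull-back $\multialg{\mathfrak B}\oplus_{\corona{\mathfrak B}}\mathfrak D$ as $\mathsf X$-$C^\ast$-algebras, where the Busby map $\tau\colon\mathfrak D\to\corona{\mathfrak B}$ is $\mathsf X$-equivariant. Under this identification, producing an $\mathsf X$-equivariant c.p.\ lift $\tilde\phi\colon\mathfrak A\to\mathfrak E$ of $\phi$ amounts to producing an $\mathsf X$-equivariant c.p.\ map $\psi\colon\mathfrak A\to\multialg{\mathfrak B}$ with $\pi\circ\psi=\tau\circ\phi$, where $\pi\colon\multialg{\mathfrak B}\to\corona{\mathfrak B}$ is the quotient map; one then sets $\tilde\phi=(\psi,\phi)$. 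Indeed $(\psi,\phi)$ is c.p., its $\mathfrak D$-component recovers $\phi$, and for $a\in\mathfrak A(\mathsf U)$ one has $\psi(a)\in\multialg{\mathfrak B}(\mathsf U)$ and $\phi(a)\in\mathfrak D(\mathsf U)$, so $(\psi(a),\phi(a))$ lands in $(\multialg{\mathfrak B}\oplus_{\corona{\mathfrak B}}\mathfrak D)(\mathsf U)$ by the very definition of the action on the pull-back, making $\tilde\phi$ $\mathsf X$-equivariant.

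First I would note that $\tau\circ\phi\colon\mathfrak A\to\corona{\mathfrak B}$ is automatically $\mathsf X$-equivariant, being a composition of the $\mathsf X$-equivariant maps $\phi$ and $\tau$. The crux is to verify that it is \emph{nuclear}, for then Proposition~\ref{p:liftfromcorona} — applicable since $\mathfrak A$ is separable, exact and lower semicontinuous and $\mathfrak B$ is separable with Property~(UBS) — furnishes precisely the desired nuclear, $\mathsf X$-equivariant lift $\psi$ through $\pi$. In case $(i)$ this is immediate: as $\phi$ is nuclear and the composition of any c.p.\ map with a nuclear map is nuclear, $\tau\circ\phi$ is nuclear, and the construction of the previous paragraph finishes the argument.

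In case $(ii)$ the nuclearity must instead be extracted from exactness of $\mathfrak E$, and I expect this to be the main obstacle. Here $\mathfrak D$ is a quotient of the exact algebra $\mathfrak E$, hence itself exact, and $\mathfrak B$ is separable (so $\sigma$-unital) and nuclear (the refinement of the hypothesis singled out in the statement). Thus Corollary~\ref{c:exactext} applies to $0\to\mathfrak B\to\mathfrak E\to\mathfrak D\to 0$, and exactness of $\mathfrak E$ forces the Busby map $\tau$ to be nuclear. Consequently $\tau\circ\phi$ is nuclear for \emph{any} c.p.\ $\phi$, and we are reduced to the situation of case $(i)$. This is the one step that genuinely uses nuclearity of $\mathfrak B$ together with the Effros--Haagerup type input packaged in Corollary~\ref{c:exactext}; everything else is the formal pull-back bookkeeping above.

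Assembling the pieces, in either case $\tau\circ\phi$ is a nuclear, $\mathsf X$-equivariant c.p.\ map into $\corona{\mathfrak B}$, Proposition~\ref{p:liftfromcorona} supplies a nuclear, $\mathsf X$-equivariant lift $\psi\colon\mathfrak A\to\multialg{\mathfrak B}$, and $\tilde\phi=(\psi,\phi)\colon\mathfrak A\to\multialg{\mathfrak B}\oplus_{\corona{\mathfrak B}}\mathfrak D\cong\mathfrak E$ is the sought $\mathsf X$-equivariant c.p.\ lift of $\phi$.
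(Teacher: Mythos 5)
Your proposal is correct and follows essentially the same route as the paper's own proof: identify $\mathfrak E$ with the pull-back $\multialg{\mathfrak B}\oplus_{\corona{\mathfrak B}}\mathfrak D$ via Proposition~\ref{p:Xextensions}, reduce everything to showing $\tau\circ\phi$ is nuclear, lift it through Proposition~\ref{p:liftfromcorona}, and set $\tilde\phi=(\psi,\phi)$, with case $(ii)$ handled exactly as in the paper by exactness of quotients plus Corollary~\ref{c:exactext}. Your explicit verification of $\mathsf X$-equivariance of $(\psi,\phi)$ and your flagging that case $(ii)$ genuinely uses nuclearity of $\mathfrak B$ (which Corollary~\ref{c:exactext} requires, and which the theorem's ``in particular'' clause supplies) are the only points where you are slightly more explicit than the paper.
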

\begin{proof}
Let $\tau$ denote the Busby map of our given extension. By Proposition \ref{p:Xextensions}, $\tau$ is $\mathsf X$-equivariant and $\mathfrak E \xrightarrow \cong \multialg{\mathfrak B} \oplus_{\corona{\mathfrak B}} \mathfrak A$ is an isomorphism of $\mathsf X$-$C^\ast$-algebras. 

Suppose that $\tau \circ \phi$ is nuclear. As $\tau \circ \phi$ is $\mathsf X$-equivariant, we may lift $\tau \circ \phi$ to an $\mathsf X$-equivariant c.p.~map $\psi \colon \mathfrak A \to \multialg{\mathfrak B}$, by Proposition \ref{p:liftfromcorona}. The c.p.~map 
\[
\tilde \phi = (\psi,\phi) \colon \mathfrak A \to \multialg{\mathfrak B} \oplus_{\corona{\mathfrak B}} \mathfrak A \cong \mathfrak E
\]
is an $\mathsf X$-equivariant lift of $\phi$. So it suffices to show that $\tau \circ \phi$ is nuclear if either $(i)$ or $(ii)$ holds. If $\phi$ is nuclear (i.e.~$(i)$ holds), then $\tau \circ \phi$ is nuclear, as compositions of a nuclear c.p.~map with any c.p.~map is nuclear. If $\mathfrak E$ is exact and $\mathfrak B$ is nuclear (i.e.~$(ii)$ holds), then $\mathfrak D$ is exact, as quotients of exact $C^\ast$-algebras are exact \cite{Kirchberg-CAR}. Thus by Corollary \ref{c:exactext}, $\tau$ is nuclear and thus $\tau \circ \phi$ is nuclear. ``In particular'' follows from Proposition \ref{p:manyproperty}.
\end{proof}

A consequence of Theorem \ref{t:Xlifting} is the following result, which say that in most cases of interest, an extension of $\mathsf X$-$C^\ast$-algebra will be \emph{semisplit}, i.e.~it will have an $\mathsf X$-equivariant c.p.~splitting, as long as the quotient is lower semicontinuous.

\begin{theorem}\label{t:Xsplit}
Let $\mathsf X$ be a topological space, and $0 \to \mathfrak B \to \mathfrak E \to \mathfrak A\to 0$ be an extension of separable, nuclear $\mathsf X$-$C^\ast$-algebras. Suppose that $\mathfrak B$ is upper semicontinuous and $\mathfrak A$ is lower semicontinuous. Then there is an $\mathsf X$-equivariant c.p.~splitting $\mathfrak A \to \mathfrak E$.
\end{theorem}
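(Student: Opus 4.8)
The plan is to observe that an $\mathsf X$-equivariant c.p.~splitting of the extension $0 \to \mathfrak B \to \mathfrak E \to \mathfrak A \to 0$ is precisely an $\mathsf X$-equivariant c.p.~lift of the identity map $\mathrm{id}_{\mathfrak A} \colon \mathfrak A \to \mathfrak A$ along the quotient map $\mathfrak E \to \mathfrak A$. Thus the whole statement will follow by applying Theorem \ref{t:Xlifting} with $\mathfrak D = \mathfrak A$ and $\phi = \mathrm{id}_{\mathfrak A}$, provided I can check that the hypotheses of that theorem are met and that condition $(i)$ holds for the identity map.

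The verification of the hypotheses is routine. First, $\mathfrak B$ is separable, nuclear, and upper semicontinuous, so by Proposition \ref{p:manyproperty} it has Property (UBS); this is exactly the ``in particular'' clause quoted in Theorem \ref{t:Xlifting}. Second, the quotient $\mathfrak A$ is separable and lower semicontinuous by assumption, and since it is nuclear it is in particular exact, so it qualifies as the domain algebra in Theorem \ref{t:Xlifting}. The identity map $\mathrm{id}_{\mathfrak A}$ is trivially $\mathsf X$-equivariant, since $\mathrm{id}_{\mathfrak A}(\mathfrak A(\mathsf U)) = \mathfrak A(\mathsf U)$ for every $\mathsf U \in \mathbb O(\mathsf X)$.

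The only point carrying any content is checking condition $(i)$, namely that $\mathrm{id}_{\mathfrak A}$ is nuclear in the sense of Example \ref{e:nuclearmaps}. This is exactly the completely positive approximation property: a $C^\ast$-algebra $\mathfrak A$ is nuclear if and only if its identity map lies in the point-norm closure of $CP_\mathrm{fact}(\mathfrak A,\mathfrak A)$, i.e.~can be approximated point-norm by maps factoring through matrix algebras. Since $\mathfrak A$ is assumed nuclear, $\mathrm{id}_{\mathfrak A}$ is a nuclear c.p.~map, so condition $(i)$ is satisfied.

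With these checks in place, Theorem \ref{t:Xlifting} produces an $\mathsf X$-equivariant c.p.~lift $\tilde\phi \colon \mathfrak A \to \mathfrak E$ of $\mathrm{id}_{\mathfrak A}$; that is, $p \circ \tilde\phi = \mathrm{id}_{\mathfrak A}$, where $p \colon \mathfrak E \to \mathfrak A$ is the quotient map. This $\tilde\phi$ is the desired $\mathsf X$-equivariant c.p.~splitting, completing the proof. There is no genuine obstacle here beyond correctly identifying the splitting as a lift of the identity and recording the nuclearity of $\mathrm{id}_{\mathfrak A}$; the real work has already been done in Theorem \ref{t:Xlifting} and Proposition \ref{p:manyproperty}.
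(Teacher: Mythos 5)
Your proof is correct and is essentially the paper's own argument: the paper's proof is the single line ``apply Theorem \ref{t:Xlifting} with $\mathfrak D = \mathfrak A$ and $\phi = \mathrm{id}_{\mathfrak A}$,'' leaving implicit exactly the verifications you spell out (Property (UBS) for $\mathfrak B$ via Proposition \ref{p:manyproperty}, exactness of $\mathfrak A$, and nuclearity of $\mathrm{id}_{\mathfrak A}$ from the completely positive approximation property). Your more detailed write-up fills in those routine checks correctly, so there is nothing to fix.
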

\begin{proof}
Apply Theorem \ref{t:Xlifting}, with $\mathfrak A = \mathfrak D$ and $\phi = id_{\mathfrak A}$, to find the $\mathsf X$-equivariant c.p.~splitting.
\end{proof}

\begin{remark}
It is well-known that the above theorem fails if we remove the lower semicontinuity assumption of $\mathfrak A$. E.g., the extension $0 \to C_0((0,1]) \to C([0,1]) \to \mathbb C \to 0$ of $[0,1]$-$C^\ast$-algebras (with the obvious actions) can never have an $[0,1]$-equivariant c.p.~splitting (or even $[0,1]$-equivariant non-c.p.~splitting), as the only $[0,1]$-equivariant map $\mathbb C \to C([0,1])$ is the zero map.
\end{remark}


 \section{Comparing ideal related $KK$-theory and $E$-theory}

Recall, that a \emph{$C^\ast$-algebra over $\mathsf X$}, is an $\mathsf X$-$C^\ast$-algebra for which the action is finitely lower semicontinuous and upper semicontinuous.
In \cite{DadarlatMeyer-E-theory}, Dadarlat and Meyer construct $E$-theory for separable $C^\ast$-algebras over $\mathsf X$ when $\mathsf X$ is second countable. We will sketch the construction.

An \emph{asymptotic morphism} from $\mathfrak A$ to $\mathfrak B$ is a map $\phi\colon \mathfrak A \to C_b([0,\infty), \mathfrak B)$,
such that the composition of this map with the quotient map onto $\mathfrak B_\infty := C_b([0,\infty), \mathfrak B)/C_0([0,\infty), \mathfrak B)$, 
call this composition $\dot \phi$, is a $\ast$-homomorphism. If $\phi$ and $\phi'$ are asymptotic morphisms, we say that they are \emph{equivalent} if $\dot \phi = \dot \phi'$.
If $\mathfrak A$ and $\mathfrak B$ are $\mathsf X$-$C^\ast$-algebras, then $\mathfrak B_\infty$ is an $\mathsf X$-$C^\ast$-algebra by
\[
\mathfrak B_\infty(\mathsf U) = \frac{C_b([0,\infty), \mathfrak B(\mathsf U)) + C_0([0,\infty), \mathfrak B)}{C_b([0,\infty), \mathfrak B)}.
\]
We say that an asymptotic morphism $\phi$ is \emph{approximately $\mathsf X$-equivariant} if the induced $\ast$-homomorphism $\dot \phi$ is $\mathsf X$-equivariant. Note that this does not imply that the asymptotic morphism is $\mathsf X$-equivariant.
We say that two approximately $\mathsf X$-equivariant asymptotic morphism $\phi_0,\phi_1$ from $\mathfrak A$ to $\mathfrak B$ are \emph{homotopic} if there is an approximately $\mathsf X$-equivariant asymptotic morphism $\Phi$ from $\mathfrak A$ to
$C([0,1],\mathfrak B)$ such that $\mathrm{ev}_i \circ \Phi = \phi_i$ for $i=0,1$. We let $[[ \mathfrak A, \mathfrak B]]_\mathsf{X}$ denote the set of homotopy classes of approximately $\mathsf X$-equivariant asymptotic morphisms. 
 
For separable $C^\ast$-algebras $\mathfrak A$ and $\mathfrak B$ over $\mathsf X$, where $\mathsf X$ is second countable, we define
\[
E(\mathsf X; \mathfrak A, \mathfrak B) := [[ C_0(\mathbb R) \otimes \mathfrak A \otimes \mathbb K , C_0(\mathbb R) \otimes \mathfrak B \otimes \mathbb K]]_\mathsf{X}.
\]
This comes equipped with an abelian group structure, as well as a bilinear composition product. Thus $E(\mathsf X; -,-)$ is a bivariant functor from the category of separable $C^\ast$-algebras over $\mathsf X$ to the category of abelian groups.

Similarly, consider asymptotic morphisms $\phi\colon \mathfrak A \to C_b([0,\infty), \mathfrak B)$, such that $\phi$ is an $\mathsf X$-equivariant contractive c.p.~map. 
Note that these are actually $\mathsf X$-equivariant and not just approximately $\mathsf X$-equivariant. 
By again taking homotopies only of this form we may construct the set $[[\mathfrak A, \mathfrak B]]_\mathsf{X}^{cp}$ of homotopy classes of such asymptotic morphisms.

By \cite[Theorem 5.2]{DadarlatMeyer-E-theory}, when $\mathsf X$ is second countable, and $\mathfrak A$ and $\mathfrak B$ are separable $C^\ast$-algebras over $\mathsf X$, there is a natural isomorphism
\[
KK(\mathsf X; \mathfrak A, \mathfrak B) \cong [[ C_0(\mathbb R) \otimes \mathfrak A \otimes \mathbb K , C_0(\mathbb R) \otimes \mathfrak B \otimes \mathbb K]]_\mathsf{X}^{cp}.
\]

\begin{remark}
Although $C^\ast$-algebras over $\mathsf X$ are commonly thought of as the ``correct'' generalisation of $C^\ast$-algebras when one wants to incorporate ideal structure, 
there are given examples in \cite{GabeRuiz-absrep} of why it is not always convenient only to consider $C^\ast$-algebras over $\mathsf X$ instead of more general $\mathsf X$-$C^\ast$-algebras.
$E(\mathsf X)$-theory can easily be generalised to monotone upper semicontinuous $\mathsf X$-$C^\ast$-algebras (which will be important in future work by the author), however, 
for simplicity we will mainly work with $C^\ast$-algebras over $\mathsf X$ in this section.
\end{remark}

Recall that an $\mathsf X$-$C^\ast$-algebra (or a $C^\ast$-algebra over $\mathsf X$) is called \emph{continuous} if it is lower and upper semicontinuous.

\begin{theorem}\label{t:evskk}
Let $\mathsf X$ be second countable, and let $\mathfrak A$ and $\mathfrak B$ be separable, nuclear $C^\ast$-algebras over $\mathsf X$.
If $\mathfrak A$ continuous, then $E(\mathsf X; \mathfrak A, \mathfrak B) \cong KK(\mathsf X; \mathfrak A, \mathfrak B)$ naturally.
\end{theorem}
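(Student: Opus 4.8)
The plan is to show that the obvious forgetful map
\[
\Theta \colon KK(\mathsf X; \mathfrak A, \mathfrak B) \cong [[\mathfrak A', \mathfrak B']]_\mathsf{X}^{cp} \longrightarrow [[\mathfrak A', \mathfrak B']]_\mathsf{X} = E(\mathsf X; \mathfrak A, \mathfrak B)
\]
is a natural isomorphism, where I abbreviate $\mathfrak A' := C_0(\mathbb R) \otimes \mathfrak A \otimes \mathbb K$ and $\mathfrak B' := C_0(\mathbb R) \otimes \mathfrak B \otimes \mathbb K$. Every $\mathsf X$-equivariant contractive c.p.\ asymptotic morphism is in particular an approximately $\mathsf X$-equivariant asymptotic morphism, and a c.p.\ homotopy is a homotopy, so $\Theta$ is well defined; it is clearly compatible with the functorial and product structures, hence natural. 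Before anything else I would record the permanence facts I need: since $\mathfrak A$ is separable, nuclear (hence exact) and continuous, $\mathfrak A'$ is separable, exact and lower semicontinuous by Lemma \ref{l:actionstensorD}; and for any locally compact $T$ (I will use $T=[0,\infty)$ and $T=[0,1]$) the $\mathsf X$-$C^\ast$-algebra $C_0(T,\mathfrak B')$ is separable, nuclear and upper semicontinuous by Lemma \ref{l:actionstensorD}, hence has Property (UBS) by Proposition \ref{p:manyproperty}.

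For surjectivity, let $\phi$ represent a class in $E(\mathsf X;\mathfrak A,\mathfrak B)$, so that $\dot\phi\colon \mathfrak A' \to \mathfrak B'_\infty$ is an $\mathsf X$-equivariant $\ast$-homomorphism, and in particular a nuclear (as $\mathfrak A'$ is nuclear), $\mathsf X$-equivariant c.p.\ map. I would apply Theorem \ref{t:Xlifting}$(i)$ to the extension of $\mathsf X$-$C^\ast$-algebras
\[
0 \to C_0([0,\infty),\mathfrak B') \to C_b([0,\infty),\mathfrak B') \to \mathfrak B'_\infty \to 0,
\]
whose ideal has Property (UBS) and whose quotient receives the nuclear map $\dot\phi$ from the separable, exact, lower semicontinuous algebra $\mathfrak A'$. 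This produces an $\mathsf X$-equivariant c.p.\ lift $\psi\colon \mathfrak A' \to C_b([0,\infty),\mathfrak B')$; after a routine compression (by a quasicentral approximate unit of the ideal) I may assume $\psi$ contractive without changing $\dot\psi=\dot\phi$. Then $\psi$ is an $\mathsf X$-equivariant contractive c.p.\ asymptotic morphism with $\dot\psi=\dot\phi$, so $\psi$ and $\phi$ are equivalent and a fortiori homotopic in $E$-theory; thus $[\phi]=\Theta[\psi]$.

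For injectivity the key elementary observation is a convexity trick: if $\psi_0,\psi_1\colon \mathfrak A'\to C_b([0,\infty),\mathfrak B')$ are two $\mathsf X$-equivariant contractive c.p.\ asymptotic morphisms with $\dot\psi_0=\dot\psi_1$, then the straight-line path $H(a)(t)(s)=(1-s)\psi_0(a)(t)+s\psi_1(a)(t)$ is again $\mathsf X$-equivariant, contractive and c.p., and its induced map into $(C([0,1],\mathfrak B'))_\infty$ is the constant path $\dot\psi_0$, hence a $\ast$-homomorphism; so $H$ is an $\mathsf X$-equivariant c.p.\ homotopy from $\psi_0$ to $\psi_1$. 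Now suppose $\phi_0,\phi_1$ are $\mathsf X$-equivariant contractive c.p.\ asymptotic morphisms that are homotopic in $E$-theory via an approximately $\mathsf X$-equivariant asymptotic morphism $\Phi\colon \mathfrak A'\to C_b([0,\infty),C([0,1],\mathfrak B'))$ with $\mathrm{ev}_i\circ\Phi=\phi_i$. The induced $\dot\Phi$ is a nuclear, $\mathsf X$-equivariant $\ast$-homomorphism into $(C([0,1],\mathfrak B'))_\infty$, so by Theorem \ref{t:Xlifting}$(i)$ (applied to the analogous extension with ideal $C_0([0,\infty),C([0,1],\mathfrak B'))$, which has Property (UBS)) it lifts to an $\mathsf X$-equivariant contractive c.p.\ asymptotic morphism $\Psi$. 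Then $\Psi$ is a c.p.\ homotopy between $\phi_i':=\mathrm{ev}_i\circ\Psi$, and the induced maps satisfy $\dot\phi_i'=\mathrm{ev}_i\circ\dot\Psi=\mathrm{ev}_i\circ\dot\Phi=\dot\phi_i$; applying the convexity observation to the pairs $(\phi_i,\phi_i')$ and concatenating the three homotopies $\phi_0\simeq\phi_0'\simeq\phi_1'\simeq\phi_1$ shows $[\phi_0]=[\phi_1]$ in $KK(\mathsf X;\mathfrak A,\mathfrak B)$.

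The genuinely hard work is entirely absorbed into Theorem \ref{t:Xlifting} (and hence into Property (UBS) and the selection and Hahn--Banach machinery behind it); the remaining obstacle here is bookkeeping — checking that the asymptotic-algebra sequences are genuine extensions of $\mathsf X$-$C^\ast$-algebras whose ideals carry Property (UBS), that tensoring by $C_0(\mathbb R)\otimes\mathbb K$ and by $C_0(T)$ preserves exactly the (lower/upper semicontinuity, nuclearity, exactness) hypotheses demanded by the lifting theorem, and the minor point of arranging contractivity of the produced lifts. Continuity of $\mathfrak A$ enters precisely to guarantee that $\mathfrak A'$ is lower semicontinuous, which is the one hypothesis of Theorem \ref{t:Xlifting} that is not automatic from nuclearity, and is where I expect the only real care to be required.
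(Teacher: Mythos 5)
Your proposal is correct and follows essentially the same route as the paper: both reduce the statement to showing $[[\mathfrak A',\mathfrak B']]_{\mathsf X} = [[\mathfrak A',\mathfrak B']]_{\mathsf X}^{cp}$ by applying the $\mathsf X$-equivariant lifting machinery (Theorem \ref{t:Xlifting}, with Property (UBS) supplied by Proposition \ref{p:manyproperty} and the permanence facts of Lemma \ref{l:actionstensorD}) to the asymptotic algebra extension, and then invoke \cite[Theorem 5.2]{DadarlatMeyer-E-theory}. The only cosmetic differences are that the paper pulls back the extension along $\dot\phi$ and quotes Theorem \ref{t:Xsplit} instead of applying Theorem \ref{t:Xlifting}$(i)$ directly to $0 \to C_0([0,\infty),\mathfrak B') \to C_b([0,\infty),\mathfrak B') \to \mathfrak B'_\infty \to 0$, and that your explicit straight-line c.p.\ homotopy between equivalent c.p.\ asymptotic morphisms (needed to repair the endpoints after replacing the homotopy) is left implicit in the paper's phrase ``equivalent to $\phi$''.
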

\begin{proof}
Let $\phi$ be an approximately $\mathsf X$-equivariant asymptotic morphism from $\mathfrak A$ to $\mathfrak B$, and
\[
 \dot \phi \colon \mathfrak A \to \mathfrak B_\infty := \frac{C_b([0,\infty),\mathfrak B)}{C_0([0,\infty), \mathfrak B)}
\]
be the induced $\mathsf X$-equivariant $\ast$-homomorphism. Consider the pull-back diagram
\[
\xymatrix{
0 \ar[r] & C_0([0,\infty),\mathfrak B) \ar@{=}[d] \ar[r] & \mathfrak E \ar[d]^\sigma \ar[r] & \mathfrak A \ar[d]^{\dot \phi} \ar[r] & 0 \\
0 \ar[r] & C_0([0,\infty),\mathfrak B) \ar[r] & C_b([0,\infty),\mathfrak B) \ar[r] & \mathfrak B_\infty \ar[r] & 0,
}
\]
and observe that the top row is an extension of $\mathsf X$-$C^\ast$-algebras.
By Lemma \ref{l:actionstensorD}, $C_0([0,\infty),\mathfrak B)$ is a $C^\ast$-algebra over $\mathsf X$.
Thus by Theorem \ref{t:Xsplit}, there is an $\mathsf X$-equivariant contractive c.p.~split $\psi \colon \mathfrak A \to \mathfrak E$.
It follows that $\sigma \circ \psi$ is an $\mathsf X$-equivariant, contractive c.p.~asymptotic morphism which is equivalent to $\phi$.
By replacing $\mathfrak B$ with $C([0,1],\mathfrak B)$, it follows that any approximately $\mathsf X$-equivariant asymptotic homotopy may be replaced by an
$\mathsf X$-equivariant, contractive c.p.~asymptotic homotopy. Thus
\[
 [[ \mathfrak A, \mathfrak B]]_\mathsf{X} = [[\mathfrak A, \mathfrak B]]_{\mathsf X}^{cp}.
\]
 Let $\mathfrak A_0 = C_0(\mathbb R, \mathfrak A)\otimes \mathbb K$ and $\mathfrak B_0 = C_0(\mathbb R,\mathfrak B) \otimes \mathbb K$, which are nuclear $C^\ast$-algebras over $\mathsf X$.
 By what we proved above and by \cite[Theorem 5.2]{DadarlatMeyer-E-theory} it follows that
 \[
  KK(\mathsf X; \mathfrak A, \mathfrak B) \cong [[ \mathfrak A_0 , \mathfrak B_0]]_{\mathsf X}^{cp} = [[ \mathfrak A_0 , \mathfrak B_0]]_{\mathsf X} = E(\mathsf X; \mathfrak A, \mathfrak B),
 \]
where the isomorphism is natural.
\end{proof}

\begin{remark}
 The proof above can easily be modified so that we only require that $\mathfrak B$ has Property (UBS) instead of being nuclear.
 Thus if $\mathsf X$ is finite or if it is locally compact and Hausdorff, we do not need nuclearity of $\mathfrak B$ in Theorem \ref{t:evskk}.
\end{remark}


\section{Absorption of strongly self-absorbing $C^\ast$-algebras}

In this section we give a few easy applications of Theorem \ref{t:evskk}. Using this result we can weaken the deep classification result of Kirchberg \cite{Kirchberg-non-simple}. A proof of this theorem can be found in \cite{Gabe-Oinfty} by the author.

A $C^\ast$-algebra is called \emph{strongly purely infinite}, if it has a certain comparability property defined by Kirchberg and Rørdam in \cite[Definition 5.1]{KirchbergRordam-absorbingOinfty}. As we do not need the exact definition in this paper, we simply mention that a separable, nuclear $C^\ast$-algebra $\mathfrak A$ is stronly purely infinite if and only if $\mathfrak A \otimes \mathcal O_\infty \cong \mathfrak A$, by \cite[Theorem 8.6]{KirchbergRordam-absorbingOinfty} and \cite[Corollary 3.2]{TomsWinter-ssa}.

Recall that an action $\mathbb O(\mathsf X) \to \mathbb I(\mathfrak A)$ is \emph{tight} if it is a lattice isomorphism.

\begin{theorem}[Cf.~Kirchberg]\label{t:Kirchberg}
Let $\mathfrak A$ and $\mathfrak B$ be separable, nuclear, stable, strongly purely infinite, tight $\mathsf X$-$C^\ast$-algebras. 
Then any invertible element in $E(\mathsf X; \mathfrak A, \mathfrak B)$ lifts to an $\mathsf X$-equivariant $\ast$-isomorphism $\mathfrak A \to \mathfrak B$.
\end{theorem}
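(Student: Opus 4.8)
The plan is to reduce the statement to Kirchberg's $KK(\mathsf X)$-classification of strongly purely infinite $C^\ast$-algebras \cite{Kirchberg-non-simple} by transporting the invertible $E(\mathsf X)$-element across the comparison isomorphism of Theorem \ref{t:evskk}, applying Kirchberg's theorem on the $KK(\mathsf X)$-side, and transporting the resulting $\ast$-isomorphism back.

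First I would check that Theorem \ref{t:evskk} is applicable, i.e.\ that a tight $\mathsf X$-$C^\ast$-algebra is a continuous (nuclear, separable) $C^\ast$-algebra over $\mathsf X$. Since the action $\mathbb O(\mathsf X) \to \mathbb I(\mathfrak A)$ is an isomorphism of complete lattices, it preserves arbitrary infima and suprema and sends the least (resp.\ greatest) element to the least (resp.\ greatest) element. Preservation of infima is exactly lower semicontinuity, because the infimum in $\mathbb O(\mathsf X)$ is the interior of the intersection while in $\mathbb I(\mathfrak A)$ it is the intersection; preservation of suprema gives $\mathfrak A(\bigcup_\lambda \mathsf U_\lambda) = \overline{\sum_\lambda \mathfrak A(\mathsf U_\lambda)}$, which yields finite upper semicontinuity (the sum of two closed ideals is closed) and monotone upper semicontinuity, while $\mathfrak A(\emptyset) = 0$ comes from preservation of least elements. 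Hence $\mathfrak A$ and $\mathfrak B$ are continuous, so Theorem \ref{t:evskk} supplies a natural isomorphism $\Theta \colon KK(\mathsf X; \mathfrak A, \mathfrak B) \xrightarrow{\cong} E(\mathsf X; \mathfrak A, \mathfrak B)$.

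The crucial properties I would use are that $\Theta$ is induced by the canonical functor $KK(\mathsf X) \to E(\mathsf X)$, hence is compatible with the composition products of the two bivariant theories and sends the class of an $\mathsf X$-equivariant $\ast$-homomorphism to its class as an asymptotic morphism. Thus, given an invertible $x \in E(\mathsf X; \mathfrak A, \mathfrak B)$, multiplicativity of $\Theta$ forces $y := \Theta^{-1}(x) \in KK(\mathsf X; \mathfrak A, \mathfrak B)$ to be invertible. Now Kirchberg's classification theorem \cite{Kirchberg-non-simple} applies to the separable, nuclear, stable, strongly purely infinite, tight $\mathsf X$-$C^\ast$-algebras $\mathfrak A$ and $\mathfrak B$: the invertible element $y$ is realised by an $\mathsf X$-equivariant $\ast$-isomorphism $\alpha \colon \mathfrak A \to \mathfrak B$ with $[\alpha]_{KK} = y$. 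Applying $\Theta$ and using that it carries $\ast$-homomorphism classes to asymptotic-morphism classes, the $E(\mathsf X)$-class of $\alpha$ equals $\Theta(y) = x$, so $\alpha$ lifts $x$ as required.

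I expect the main obstacle to be purely the bookkeeping of compatibility: one must make sure that the isomorphism of Theorem \ref{t:evskk}, which is assembled from \cite[Theorem 5.2]{DadarlatMeyer-E-theory} together with the identification $[[\,\cdot,\cdot\,]]_\mathsf{X} = [[\,\cdot,\cdot\,]]_\mathsf{X}^{cp}$, genuinely intertwines the Kasparov product with the $E(\mathsf X)$-composition product, so that invertibility transports in both directions. The genuinely deep input is Kirchberg's theorem itself, which is invoked as a black box; everything else is a formal consequence of the comparison isomorphism established earlier.
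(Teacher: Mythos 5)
Your proposal is correct and follows essentially the same route as the paper: transport the invertible $E(\mathsf X)$-element to an invertible $KK(\mathsf X)$-element via Theorem \ref{t:evskk}, then invoke Kirchberg's classification theorem \cite{Kirchberg-non-simple} to realise it by an $\mathsf X$-equivariant $\ast$-isomorphism. Your extra verifications (that tightness implies continuity, and that the comparison isomorphism respects composition products so invertibility transports) are exactly the details the paper leaves implicit in the word ``naturally''.
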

\begin{proof}
By Theorem \ref{t:evskk}, $E(\mathsf X; \mathfrak A, \mathfrak B) \cong KK(\mathsf X; \mathfrak A, \mathfrak B)$ and $E(\mathsf X; \mathfrak B , \mathfrak A) \cong KK(\mathsf X; \mathfrak B , \mathfrak A)$ naturally. Thus any invertible element in $E(\mathsf X; \mathfrak A, \mathfrak B)$ lifts to an invertible element in $KK(\mathsf X; \mathfrak A, \mathfrak B)$, which in turn lifts to an $\mathsf X$-equivariant $\ast$-isomorphism $\mathfrak A \to \mathfrak B$ by a very deep theorem of Kirchberg \cite{Kirchberg-non-simple} (alternatively, see \cite[Theorem G]{Gabe-Oinfty}).
\end{proof}

It turns out (cf.~\cite[Theorem 4.6]{DadarlatMeyer-E-theory}) that ideal related $E$-theory is (a priori) much more well-behaved with respect to $K$-theory than ideal related $KK$-theory, and thus it should be easier to apply the above theorem for $K$-theoretic classification than the original result of Kirchberg.

As is costumary, we say that a separable $C^\ast$-algebra \emph{satisfies the UCT}, if it satisfies the universal coefficient theorem of Rosenberg and Schochet \cite{RosenbergSchochet-UCT}. This is equivalent to the $C^\ast$-algebra being $KK$-equivalent to a commutative $C^\ast$-algebra.

For any $\alpha \in E(\mathsf X; \mathfrak A, \mathfrak B)$ there is an induced element $\alpha_\mathsf{U} \in E(\mathfrak A(\mathsf U), \mathfrak B(\mathsf U))$. In particular, this also induces a homomorphism in $K$-theory $K_\ast(\alpha_\mathsf{U}) \colon K_\ast(\mathfrak A(\mathsf U)) \to K_\ast(\mathfrak B(\mathsf U))$. The following result of Dadarlat and Meyer gives a very effective way of determining when an $E(\mathsf X)$-element is invertible as a ``point-wise'' condition.

\begin{theorem}[\cite{DadarlatMeyer-E-theory}, Theorems 3.10 and 4.6]\label{t:ptwiseinv}
Let $\mathsf X$ be a second countable space, and let $\mathfrak A$ and $\mathfrak B$ be separable $C^\ast$-algebras over $\mathsf X$. An element $\alpha \in E(\mathsf X; \mathfrak A, \mathfrak B)$ is invertible if and only if the induced element $\alpha_\mathsf{U} \in E(\mathfrak A(\mathsf U), \mathfrak B(\mathsf U))$ is invertible for each $\mathsf U\in \mathbb O(\mathsf X)$.

In particular, if $\mathfrak A(\mathsf U)$ and $\mathfrak B(\mathsf U)$ satisfy the UCT of Rosenberg and Schochet for each $\mathsf U\in \mathbb O(\mathsf X)$, then $\alpha \in E(\mathsf X; \mathfrak A, \mathfrak B)$ is invertible if and only if $K_\ast(\alpha_\mathsf{U}) \colon K_\ast(\mathfrak A(\mathsf U)) \to K_\ast(\mathfrak B(\mathsf U))$ is an isomorphism for each $\mathsf U\in \mathbb O(\mathsf X)$.
\end{theorem}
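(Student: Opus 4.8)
The plan is to dispatch necessity immediately and concentrate on sufficiency, which I would prove by showing that the family of restriction functors $\alpha \mapsto \alpha_\mathsf{U}$ is \emph{conservative}, using half-exactness of $E(\mathsf X)$-theory together with a reduction to finite approximations of $\mathsf X$.

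\emph{Necessity} is formal: restriction to the ideal over $\mathsf U$, namely $\gamma \mapsto \gamma_\mathsf{U}$, is a product-preserving functor $E(\mathsf X;\mathfrak A,\mathfrak B)\to E(\mathfrak A(\mathsf U),\mathfrak B(\mathsf U))$ carrying identities to identities, so an $E(\mathsf X)$-inverse $\beta$ of $\alpha$ restricts to a two-sided inverse $\beta_\mathsf{U}$ of $\alpha_\mathsf{U}$. For \emph{sufficiency} I would first reduce invertibility of $\alpha$ to a statement about a natural transformation. Since $E(\mathsf X)$ is an additive category, $\alpha$ is invertible if and only if composition with $\alpha$ induces an isomorphism of abelian groups $\alpha_\ast\colon E(\mathsf X;\mathfrak D,\mathfrak A)\to E(\mathsf X;\mathfrak D,\mathfrak B)$ for every separable $\mathfrak D$ over $\mathsf X$: take $\mathfrak D=\mathfrak B$ to produce a right inverse of $\alpha$, then $\mathfrak D=\mathfrak A$ to promote it to a two-sided inverse. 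So it suffices to prove each $\alpha_\ast$ is an isomorphism.

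The engine is the half-exactness of $E(\mathsf X;\mathfrak D,-)$ established by Dadarlat and Meyer \cite{DadarlatMeyer-E-theory}. For open $\mathsf U\in\mathbb O(\mathsf X)$ the extension $0\to\mathfrak A(\mathsf U)\to\mathfrak A\to\mathfrak A/\mathfrak A(\mathsf U)\to 0$ of $C^\ast$-algebras over $\mathsf X$ induces a six-term exact sequence in $E(\mathsf X;\mathfrak D,-)$, and $\alpha$ maps the sequence for $\mathfrak A$ to the one for $\mathfrak B$. If $\alpha$ induces isomorphisms on the ideal term $E(\mathsf X;\mathfrak D,\mathfrak A(\mathsf U))$ and on the quotient term $E(\mathsf X;\mathfrak D,\mathfrak A/\mathfrak A(\mathsf U))$, the five lemma forces an isomorphism on the middle term. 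The hypothesis enters at the base of an induction: when $\mathsf U$ is a minimal nonempty open set (a single point), the algebra $\mathfrak A(\mathsf U)$ carries no further ideal structure, the corresponding piece of $E(\mathsf X)$-theory collapses to the ordinary $E$-theory of $\mathfrak A(\mathsf U)$, and invertibility of $\alpha_\mathsf{U}$ there is exactly what is assumed.

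To globalize the five-lemma steps I would exploit that $\mathsf X$ is second countable, hence an inverse limit of finite $T_0$-spaces coming from finite sublattices of $\mathbb O(\mathsf X)$, with $E(\mathsf X)$-theory continuous along this approximation. For a finite $T_0$-space I induct on the number of points: a minimal open set $\mathsf U$ gives the ideal $\mathfrak A(\mathsf U)$, handled by the base case above, while the quotient lives over a space with strictly fewer points and is handled by the inductive hypothesis; the five lemma then closes the step. Passing to the limit, and using that every $\mathfrak A(\mathsf U)$ is exhausted by the ideals arising from the finite quotients, yields that $\alpha_\ast$ is an isomorphism for all $\mathfrak D$, whence $\alpha$ is invertible. \textbf{The main obstacle} is precisely this limit: one must verify that half-exactness is compatible with the inductive limits of $C^\ast$-algebras over $\mathsf X$ indexed by the finite approximations, which is exactly where continuity of $E$-theory and second countability of $\mathsf X$ are indispensable.

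For the \emph{``in particular''}, each $\mathfrak A(\mathsf U)$ and $\mathfrak B(\mathsf U)$ satisfies the UCT and so lies in the bootstrap class, on which $K$-theory is a conservative invariant. By Rosenberg--Schochet \cite{RosenbergSchochet-UCT}, $E(\mathfrak A(\mathsf U),\mathfrak B(\mathsf U))\cong KK(\mathfrak A(\mathsf U),\mathfrak B(\mathsf U))$ fits into a natural exact sequence whose first term is $\Hom(K_\ast(\mathfrak A(\mathsf U)),K_\ast(\mathfrak B(\mathsf U)))$ and whose remaining term is an $\Ext$-group; an element is invertible if and only if its $\Hom$-component $K_\ast(\alpha_\mathsf{U})$ is an isomorphism. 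Thus $\alpha_\mathsf{U}$ is an $E$-equivalence precisely when $K_\ast(\alpha_\mathsf{U})$ is an isomorphism, and feeding this into the first part converts the criterion into the stated $K$-theoretic condition for invertibility of $\alpha$.
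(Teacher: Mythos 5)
First, a structural remark: the paper never proves this theorem at all --- the bracketed citation \emph{is} the proof, the statement being imported wholesale from Dadarlat--Meyer --- so your attempt can only be measured against the cited source. Your skeleton (necessity by functoriality of restriction; reduction of invertibility of $\alpha$ to bijectivity of $\alpha_\ast \colon E(\mathsf X; \mathfrak D, \mathfrak A) \to E(\mathsf X; \mathfrak D, \mathfrak B)$ for all $\mathfrak D$; d\'evissage over finite $T_0$-spaces using half-exactness and the five lemma; passage to general second countable $\mathsf X$ via finite approximations) is indeed the architecture of that proof. Within the finite-space induction, one imprecision is worth flagging: $E(\mathsf X; \mathfrak D, \mathfrak A(\mathsf U))$ does not literally ``collapse'' to the ordinary $E$-theory of $\mathfrak A(\mathsf U)$ when $\mathsf U = \{x\}$ is an open point. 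What is true, and what one must actually use, is that an ideal supported on an open point is the extension by zero $\iota_!$ of its fibre, that $\iota_!$ is a functor from ordinary $E$-theory to $E(\mathsf X)$-theory preserving invertibility, and that there is an adjunction-type identification $E(\mathsf X; \mathfrak D, \iota_!\mathfrak C) \cong E\bigl(\mathfrak D/\mathfrak D(\mathsf X \setminus \overline{\{x\}}), \mathfrak C\bigr)$. This is repairable, but it is exactly where the Dadarlat--Meyer formalism does real work, and your sketch treats it as automatic.

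The genuine gap is the limit step. You invoke ``continuity'' of $E(\mathsf X)$-theory along the finite approximations $\mathsf X_n$, i.e.\ $E(\mathsf X; \mathfrak A, \mathfrak B) \cong \varprojlim E(\mathsf X_n; \mathfrak A, \mathfrak B)$. That is false in general: what holds is a Milnor-type exact sequence $0 \to {\varprojlim}^1 E_{\ast+1}(\mathsf X_n; \mathfrak A, \mathfrak B) \to E_\ast(\mathsf X; \mathfrak A, \mathfrak B) \to \varprojlim E_\ast(\mathsf X_n; \mathfrak A, \mathfrak B) \to 0$. With only this, the naive plan --- lift the (compatible, by uniqueness) inverses $\beta_n$ to some $\beta \in E(\mathsf X; \mathfrak B, \mathfrak A)$ --- stalls: one gets $\alpha \circ \beta = \mathrm{id}_{\mathfrak B} + \varphi$ with $\varphi$ a phantom class in the ${\varprojlim}^1$-subgroup, and nothing you wrote makes $\mathrm{id}_{\mathfrak B} + \varphi$ invertible. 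Your stated obstacle (compatibility of half-exactness with inductive limits of algebras) is also not the real issue; the issue is this ${\varprojlim}^1$ term attached to approximating the \emph{space}. Fortunately, your own Yoneda reduction supplies the repair, because it converts invertibility of an element into bijectivity of group homomorphisms, which does pass through such sequences: once the finite-space case makes composition with $\alpha$ a level-wise isomorphism of the towers $E_\ast(\mathsf X_n; \mathfrak D, \mathfrak A) \to E_\ast(\mathsf X_n; \mathfrak D, \mathfrak B)$, it induces isomorphisms on $\varprojlim$ and ${\varprojlim}^1$, and the five lemma applied to the two Milnor sequences shows $\alpha_\ast$ is an isomorphism on $E(\mathsf X; \mathfrak D, -)$. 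So the proof closes, but only after replacing the false continuity claim by the ${\varprojlim}^1$-sequence plus one more five-lemma argument. The ``in particular'' paragraph is essentially fine, granted two standard facts left implicit: algebras satisfying the UCT are $KK$-equivalent to nuclear ones, so $E = KK$ for them, and the $\Ext$-kernel of the UCT sequence is a square-zero ideal, which is what upgrades ``$K_\ast(\alpha_{\mathsf U})$ is an isomorphism'' to invertibility of $\alpha_{\mathsf U}$.
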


\begin{definition}[Toms--Winter \cite{TomsWinter-ssa}]
A separable, unital $C^\ast$-algebra $\mathscr D$ is called \emph{strongly self-absorbing} if $\mathscr D \not \cong \mathbb C$ and if there exists an isomorphism $\phi \colon \mathscr D \to \mathscr D \otimes \mathscr D$ which is approximately unitarily equivalent to the $\ast$-homomorphism $id_\mathscr{D} \otimes 1_{\mathscr D} \colon \mathscr D \to \mathscr D \otimes \mathscr D$.
\end{definition}

The following are all known examples of strongly self-absorbing $C^\ast$-algebras: the Cuntz algebras $\mathcal O_2$ and $\mathcal O_\infty$, all UHF algebras of infinite type, the Jiang--Su algebra $\mathcal Z$, and any UHF algebra of infinite type tensor $\mathcal O_\infty$. Any strongly self-absorbing $C^\ast$-algebra that satisfies the UCT of Rosenberg and Schochet is one of the above. For more information, see \cite{Winter-qdquct} for a good overview.

\begin{proposition}\label{p:absorbssa}
Let $\mathfrak A$ be a separable, nuclear, strongly purely infinite $C^\ast$-algebra, and let $\mathscr D$ be a strongly self-absorbing $C^\ast$-algebra. Then $\mathfrak A \cong \mathfrak A \otimes \mathscr D$ if and only if $\mathfrak I$ and $\mathfrak I \otimes \mathscr D$ are $KK$-equivalent for every two-sided, closed ideal $\mathfrak I$ in $\mathfrak A$.
\end{proposition}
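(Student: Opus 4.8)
The plan is to realise both algebras as tight $\mathsf X$-$C^\ast$-algebras over $\mathsf X = \Prim \mathfrak A$ and to run the canonical inclusion through the machinery of Theorems \ref{t:ptwiseinv} and \ref{t:Kirchberg}. Throughout I use that a strongly self-absorbing $C^\ast$-algebra is simple, unital and nuclear, so that $\mathbb I(\mathfrak A \otimes \mathscr D) \cong \mathbb I(\mathfrak A)$ via $\mathfrak I \mapsto \mathfrak I \otimes \mathscr D$, and that $\mathscr D$-stability of a separable $C^\ast$-algebra passes to ideals and to hereditary subalgebras by \cite{TomsWinter-ssa}. The ``only if'' direction is then immediate: if $\mathfrak A \cong \mathfrak A \otimes \mathscr D$ then $\mathfrak A$ is $\mathscr D$-stable, hence so is every ideal $\mathfrak I$, giving $\mathfrak I \cong \mathfrak I \otimes \mathscr D$ and in particular a $KK$-equivalence.

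For the ``if'' direction, set $\mathsf X = \Prim \mathfrak A$, which is second countable as $\mathfrak A$ is separable, and equip $\mathfrak A$ with the canonical tight action $\mathbb O(\mathsf X) \cong \mathbb I(\mathfrak A)$; being a lattice isomorphism this action preserves arbitrary infima and suprema, hence is continuous. Since $\mathscr D$ is simple and nuclear, $\mathfrak A \otimes \mathscr D$, $\mathfrak A \otimes \mathbb K$ and $\mathfrak A \otimes \mathscr D \otimes \mathbb K$ become tight $\mathsf X$-$C^\ast$-algebras with $(\mathfrak A \otimes \mathscr D)(\mathsf U) = \mathfrak A(\mathsf U) \otimes \mathscr D$, and by Lemma \ref{l:actionstensorD} these actions are again continuous. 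As $\mathfrak A$ is nuclear and strongly purely infinite, $\mathfrak A \cong \mathfrak A \otimes \mathcal O_\infty$, so $\mathfrak A \otimes \mathbb K$ and $\mathfrak A \otimes \mathscr D \otimes \mathbb K$ are separable, nuclear, stable and strongly purely infinite. I then consider the first-factor inclusion $\iota \colon \mathfrak A \otimes \mathbb K \to (\mathfrak A \otimes \mathbb K) \otimes \mathscr D = \mathfrak A \otimes \mathscr D \otimes \mathbb K$, $y \mapsto y \otimes 1_{\mathscr D}$, which is an $\mathsf X$-equivariant, nuclear $\ast$-homomorphism and so defines a class $[\iota] \in E(\mathsf X; \mathfrak A \otimes \mathbb K, \mathfrak A \otimes \mathscr D \otimes \mathbb K)$.

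The crux is to show that $[\iota]$ is invertible, and here Theorem \ref{t:ptwiseinv} reduces the task to checking that each restriction $[\iota]_{\mathsf U}$, namely the class of the inclusion $\mathfrak I \otimes \mathbb K \to \mathfrak I \otimes \mathbb K \otimes \mathscr D$ for $\mathfrak I = \mathfrak A(\mathsf U)$, is invertible. The hypothesis gives that $\mathfrak I$ and $\mathfrak I \otimes \mathscr D$ are $KK$-equivalent, hence $\mathfrak I \otimes \mathbb K$ and $\mathfrak I \otimes \mathbb K \otimes \mathscr D$ are $KK$-equivalent by stability of $KK$. The main obstacle is upgrading this \emph{abstract} $KK$-equivalence to the statement that the \emph{specific} first-factor inclusion is itself a $KK$-equivalence; this upgrade is exactly the content of the computation by Dadarlat and Winter of the $KK$-theory of strongly self-absorbing $C^\ast$-algebras, which shows that for strongly self-absorbing $\mathscr D$ the canonical map $A \to A \otimes \mathscr D$ is a $KK$-equivalence whenever $A$ and $A \otimes \mathscr D$ are $KK$-equivalent. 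As all algebras in sight are nuclear, $KK$-invertibility coincides with $E$-invertibility, so each $[\iota]_{\mathsf U}$ is invertible and Theorem \ref{t:ptwiseinv} yields that $[\iota]$ is invertible in $E(\mathsf X; \mathfrak A \otimes \mathbb K, \mathfrak A \otimes \mathscr D \otimes \mathbb K)$.

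Finally, Theorem \ref{t:Kirchberg} lifts $[\iota]$ to an $\mathsf X$-equivariant $\ast$-isomorphism $\mathfrak A \otimes \mathbb K \xrightarrow{\cong} (\mathfrak A \otimes \mathbb K) \otimes \mathscr D$, so $\mathfrak A \otimes \mathbb K$ is $\mathscr D$-stable. Since $\mathfrak A$ is isomorphic to the hereditary subalgebra $\mathfrak A \otimes e_{11}$ of $\mathfrak A \otimes \mathbb K$, the permanence properties of \cite{TomsWinter-ssa} give that $\mathfrak A$ is itself $\mathscr D$-stable, i.e.\ $\mathfrak A \cong \mathfrak A \otimes \mathscr D$, completing the proof. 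I expect the Dadarlat--Winter reduction in the third paragraph to be the only genuinely substantial input; the remaining steps are bookkeeping with the tight $\mathsf X$-structure and invocations of the already-established Theorems \ref{t:ptwiseinv} and \ref{t:Kirchberg}.
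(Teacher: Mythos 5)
Your architecture is the same as the paper's: put the tight action of $\mathsf X = \Prim \mathfrak A$ on everything, reduce invertibility of the class of the first-factor embedding to its restrictions to ideals via Theorem \ref{t:ptwiseinv}, lift the resulting invertible $E(\mathsf X)$-class to an isomorphism via Theorem \ref{t:Kirchberg}, and descend from $\mathfrak A \otimes \mathbb K$ to $\mathfrak A$ by \cite{TomsWinter-ssa} (whether you stabilise at the start, as you do, or at the end, as the paper does, is immaterial). The problem is the step you yourself single out as the only substantial one: upgrading the \emph{abstract} $KK$-equivalence between $\mathfrak I$ and $\mathfrak I \otimes \mathscr D$ to the statement that the \emph{canonical} map $id_{\mathfrak I} \otimes 1_{\mathscr D} \colon \mathfrak I \to \mathfrak I \otimes \mathscr D$ is a $KK$-equivalence. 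You dispose of this by attributing to Dadarlat--Winter the assertion that ``$A \to A \otimes \mathscr D$ is a $KK$-equivalence whenever $A$ and $A \otimes \mathscr D$ are $KK$-equivalent.'' No such theorem is in \cite{DadarlatWinter-KKssa}. What that paper provides (Theorem 2.2 there, combined with $K_1$-injectivity of $\mathscr D$ from \cite[Remark 3.3]{Winter-ssaZstable}) is that $id_{\mathscr D} \otimes 1_{\mathscr D} \colon \mathscr D \to \mathscr D \otimes \mathscr D$ is asymptotically unitarily equivalent to an isomorphism; the consequence is that the first-factor embedding is a $KK$-equivalence when the domain is already known to be $\mathscr D$-stable, e.g.~$B \otimes \mathscr D \to B \otimes \mathscr D \otimes \mathscr D$ for any separable $B$. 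Your ideal $\mathfrak I$ is not known to be $\mathscr D$-stable --- that is essentially what the proposition is trying to establish --- so the citation does not cover the case you need, and your proof has a hole exactly at its crux.

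The hole is filled by a short conjugation argument, which is what the paper does. Let $\alpha \in KK(\mathfrak I, \mathfrak I \otimes \mathscr D)$ be invertible. Naturality of the external product (i.e.~both sides compute the external product of $\alpha$ with the class of the unital inclusion $\mathbb C \to \mathscr D$) gives
\[
(id_{\mathfrak I \otimes \mathscr D} \otimes 1_{\mathscr D}) \circ \alpha \;=\; (\alpha \otimes id_{\mathscr D}) \circ (id_{\mathfrak I} \otimes 1_{\mathscr D}),
\]
and hence
\[
id_{\mathfrak I} \otimes 1_{\mathscr D} \;=\; (\alpha^{-1} \otimes id_{\mathscr D}) \circ (id_{\mathfrak I \otimes \mathscr D} \otimes 1_{\mathscr D}) \circ \alpha .
\]
The outer two factors are invertible by hypothesis, and the middle factor is invertible because $\mathfrak I \otimes \mathscr D$ \emph{is} $\mathscr D$-stable (as $\mathscr D \otimes \mathscr D \cong \mathscr D$), so here \cite[Theorem 2.2]{DadarlatWinter-KKssa} genuinely applies: $id_{\mathfrak I \otimes \mathscr D} \otimes 1_{\mathscr D} = id_{\mathfrak I} \otimes (id_{\mathscr D} \otimes 1_{\mathscr D})$ is asymptotically unitarily equivalent to an isomorphism, hence a $KK$-equivalence. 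Thus $id_{\mathfrak I} \otimes 1_{\mathscr D}$ is a composition of $KK$-equivalences, which is exactly the statement you wanted; with this paragraph inserted your proof closes up and coincides with the paper's.
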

\begin{proof}
If $\mathfrak A \cong \mathfrak A \otimes \mathscr D$, then $\mathfrak I \cong \mathfrak I \otimes \mathscr D$ for every two-sided, closed ideal $\mathfrak I$ in $\mathfrak A$. In particular, $\mathfrak I$ and $\mathfrak I \otimes \mathscr D$ are $KK$-equivalent.

Suppose that $\mathfrak I$ and $\mathfrak I \otimes \mathscr D$ are $KK$-equivalent, and let $\alpha \in KK(\mathfrak I, \mathfrak I \otimes \mathscr D)$ be invertible. The Kasparov product (composition)
\[
\mathfrak I \xrightarrow{\alpha} \mathfrak I \otimes \mathscr D \xrightarrow{id_{\mathfrak I} \otimes id_{\mathscr D} \otimes 1_{\mathscr D}} \mathfrak I \otimes \mathscr D \otimes \mathscr D \xrightarrow{\alpha^{-1} \otimes id_{\mathscr D}} \mathfrak I \otimes \mathscr D
\]
is exactly $id_{\mathfrak I} \otimes 1_{\mathscr D} \colon \mathfrak I \to \mathfrak I\otimes \mathscr D$. Clearly $\alpha$ and $\alpha^{-1} \otimes id_{\mathscr D}$ are invertible.  By \cite[Theorem 2.2]{DadarlatWinter-KKssa}, $id_{\mathscr D} \otimes 1_{\mathscr D} \colon \mathscr D \to \mathscr D \otimes \mathscr D$ is \emph{asymptotically} unitarily equivalent to an isomorphism $\phi$ (as any strongly self-absorbing $C^\ast$-algebra is $K_1$-injective by \cite[Remark 3.3]{Winter-ssaZstable}). Thus $id_{\mathfrak A} \otimes id_{\mathscr D} \otimes 1_{\mathscr D}$ is invertible in $KK$-theory, and hence $id_{\mathfrak I} \otimes 1_{\mathscr D} \colon \mathfrak I \to \mathfrak I \otimes \mathscr D$ induces a $KK$-equivalence, as it is a composition of $KK$-equivalences.

Let $\mathsf X = \Prim \mathfrak A$. Equip $\mathfrak A\otimes \mathscr D$ with the action $\mathbb O(\mathsf X) \to \mathbb I(\mathfrak A \otimes \mathscr D)$ given by $(\mathfrak A \otimes \mathscr D)(\mathsf U) = \mathfrak A (\mathsf U) \otimes \mathscr D$. By \cite[Theorem 1.6]{TomsWinter-ssa}, $\mathscr D$ is simple and nuclear, and thus $\mathfrak A \otimes \mathscr D$ is a separable, nuclear, strongly purely infinite, tight $\mathsf X$-$C^\ast$-algebra. Note that $id_\mathfrak{A} \otimes 1_\mathscr{D} \colon \mathfrak A \to \mathfrak A \otimes \mathscr D$ is $\mathsf X$-equivariant, and thus induces an $E(\mathsf X)$-element $\alpha \in E(\mathsf X; \mathfrak A, \mathfrak A \otimes \mathscr D)$. As
\[
id_{\mathfrak A(\mathsf U)} \otimes 1_{\mathscr D} \colon \mathfrak A(\mathsf U) \to (\mathfrak A \otimes \mathbb K)(\mathsf U)\otimes \mathscr D
\]
induces an invertible $KK$-element, and thus also an invertible $E$-element, which is $\alpha_\mathsf{U}$, for every $\mathsf U\in \mathbb O(\mathsf X)$, it follows from Theorem \ref{t:ptwiseinv} that $\alpha$ is invertible. Using Theorem \ref{t:Kirchberg} and the fact that ideal-related $E$-theory is stable, we obtain an isomorphism $\mathfrak A \otimes \mathbb K \cong \mathfrak A \otimes \mathscr D \otimes \mathbb K$. By \cite[Corollary 3.2]{TomsWinter-ssa}, $\mathfrak A \cong \mathfrak A \otimes \mathscr D$.
\end{proof}

\begin{definition}
We say that an abelian group $G$ is \emph{uniquely $n$-divisible} for an integer $n\geq 2$, if for every $g\in G$ there is a unique element $h\in G$ such that $n\cdot h =g$.

We say that $G$ is \emph{uniquely divisible} if it is uniquely $n$-divisible for every $n\geq 2$.
\end{definition}

Note that an abelian group $G$ is uniquely $n$-divisible if and only if $G \cong G \otimes \mathbb Z[\tfrac{1}{n}]$.

For any $n \geq 2$ we let $M_{n^{\infty}} = M_n \otimes M_n \otimes \dots$ denote the UHF algebra of type $n^\infty$. We let $\mathcal Q = \bigotimes_{k\in \mathbb N} M_k$ denote the universal UHF algebra.

\begin{theorem}
Let $\mathfrak A$ be a separable, nuclear, strongly purely infinite $C^\ast$-algebra, for which every two-sided, closed ideal satisfies the UCT. For $n\geq 2$, it holds that $\mathfrak A \cong \mathfrak A \otimes M_{n^\infty}$ if and only if $K_\ast(\mathfrak I)$ is uniquely $n$-divisible for every two-sided, closed ideal $\mathfrak I$ in $\mathfrak A$.

In particular, $\mathfrak A \cong \mathfrak A \otimes \mathcal Q$ if and only if $K_\ast(\mathfrak I)$ is uniquely divisible for every two-sided, closed ideal $\mathfrak I$ in $\mathfrak A$.
\end{theorem}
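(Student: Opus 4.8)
The plan is to apply Proposition~\ref{p:absorbssa} with $\mathscr D = M_{n^\infty}$ and then translate the resulting $KK$-equivalence criterion into the stated $K$-theoretic one by means of the UCT and the K\"unneth theorem. First I would record the relevant invariants of the coefficient algebra: $M_{n^\infty}$ is a UHF algebra, hence lies in the bootstrap class, with $K_0(M_{n^\infty}) \cong \mathbb Z[\tfrac1n]$ (the class of the unit corresponding to $1$) and $K_1(M_{n^\infty}) = 0$. By Proposition~\ref{p:absorbssa}, $\mathfrak A \cong \mathfrak A \otimes M_{n^\infty}$ if and only if $\mathfrak I$ and $\mathfrak I \otimes M_{n^\infty}$ are $KK$-equivalent for every two-sided, closed ideal $\mathfrak I \subseteq \mathfrak A$. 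So it suffices to show that, for a fixed ideal $\mathfrak I$ satisfying the UCT, the algebras $\mathfrak I$ and $\mathfrak I \otimes M_{n^\infty}$ are $KK$-equivalent precisely when $K_\ast(\mathfrak I)$ is uniquely $n$-divisible.

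For the forward implication I would use only functoriality: a $KK$-equivalence induces an isomorphism $K_\ast(\mathfrak I) \cong K_\ast(\mathfrak I \otimes M_{n^\infty})$. By the K\"unneth theorem, and since $\mathbb Z[\tfrac1n]$ is torsion-free so that the $\operatorname{Tor}$-term vanishes, $K_\ast(\mathfrak I \otimes M_{n^\infty}) \cong K_\ast(\mathfrak I) \otimes \mathbb Z[\tfrac1n]$. Any group of the form $G \otimes \mathbb Z[\tfrac1n]$ is uniquely $n$-divisible, as multiplication by $n$ is invertible in $\mathbb Z[\tfrac1n]$; hence $K_\ast(\mathfrak I)$, being isomorphic to such a group, is uniquely $n$-divisible.

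For the converse I would show that the canonical $\ast$-homomorphism $id_{\mathfrak I} \otimes 1_{M_{n^\infty}} \colon \mathfrak I \to \mathfrak I \otimes M_{n^\infty}$ is itself a $KK$-equivalence. Here $\mathfrak I$ satisfies the UCT by hypothesis, and $\mathfrak I \otimes M_{n^\infty}$ also satisfies the UCT, since the bootstrap class is closed under tensoring with the bootstrap algebra $M_{n^\infty}$; consequently a $\ast$-homomorphism between these two algebras is a $KK$-equivalence as soon as it induces an isomorphism on $K$-theory. Under the K\"unneth identification, the map induced by $id_{\mathfrak I} \otimes 1$ is $x \mapsto x \otimes 1 \in K_\ast(\mathfrak I) \otimes \mathbb Z[\tfrac1n]$, and when $K_\ast(\mathfrak I)$ is uniquely $n$-divisible this is exactly the isomorphism $K_\ast(\mathfrak I) \cong K_\ast(\mathfrak I) \otimes \mathbb Z[\tfrac1n]$ recorded in the observation preceding the theorem. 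Combining the two implications with Proposition~\ref{p:absorbssa} yields the first assertion.

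Finally, the ``in particular'' statement follows by the identical argument with $M_{n^\infty}$ replaced by $\mathcal Q$: now $K_0(\mathcal Q) \cong \mathbb Q$ and $K_1(\mathcal Q) = 0$, so $K_\ast(\mathfrak I \otimes \mathcal Q) \cong K_\ast(\mathfrak I) \otimes \mathbb Q$, and a group $G$ is uniquely divisible if and only if $G \cong G \otimes \mathbb Q$. I expect the only delicate point to be the bookkeeping of the K\"unneth identification, namely verifying that the natural inclusion $id \otimes 1$ realizes the isomorphism $K_\ast(\mathfrak I) \cong K_\ast(\mathfrak I) \otimes \mathbb Z[\tfrac1n]$ on the nose, so that the converse direction genuinely produces a $KK$-equivalence to feed into Proposition~\ref{p:absorbssa}; everything else is standard homological algebra over $\mathbb Z[\tfrac1n]$ and $\mathbb Q$ together with the closure of the bootstrap class under the relevant tensor products.
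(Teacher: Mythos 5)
Your proposal is correct and follows essentially the same route as the paper: both reduce the absorption question, via Proposition \ref{p:absorbssa}, to comparing $K_\ast(\mathfrak I)$ with $K_\ast(\mathfrak I \otimes M_{n^\infty}) \cong K_\ast(\mathfrak I) \otimes \mathbb Z[\tfrac{1}{n}]$ using the K\"unneth theorem, and then use the UCT to convert the $K$-theoretic condition into a $KK$-equivalence. The only (harmless) deviations are that you verify that the canonical map $id_{\mathfrak I} \otimes 1$ is itself a $KK$-equivalence, where the paper only needs the existence of some $KK$-equivalence between $\mathfrak I$ and $\mathfrak I \otimes M_{n^\infty}$, and that you handle $\mathcal Q$ by rerunning the argument with $K_0(\mathcal Q) \cong \mathbb Q$, whereas the paper deduces the $\mathcal Q$ case from the $M_{n^\infty}$ case for all $n \geq 2$; both are fine.
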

\begin{proof}
If $\mathfrak A \cong \mathfrak A \otimes M_{n^\infty}$ then $\mathfrak I \cong \mathfrak I \otimes M_{n^\infty}$ for every two-sided, closed ideal $\mathfrak I$ in $\mathfrak A$, and thus
\[
K_i (\mathfrak I) \cong K_i (\mathfrak I \otimes M_{n^\infty}) \cong K_i (\mathfrak I) \otimes \mathbb Z[\tfrac{1}{n}]
\]
by the Künneth theorem \cite{Schochet-topII} for $i=0,1$. Hence $K_\ast(\mathfrak I)$ is uniquely $n$-divisible.

Conversely, suppose that $K_\ast(\mathfrak I)$ is uniquely $n$-divisible for every two-sided, closed ideal $\mathfrak I$ in $\mathfrak A$. Then, as above, $K_i(\mathfrak I) \cong K_i(\mathfrak I)\otimes \mathbb Z[\tfrac{1}{n}] \cong K_i(\mathfrak I \otimes M_{n^\infty})$ for $i=0,1$. As $\mathfrak I$ and $\mathfrak I \otimes M_{n^\infty}$ satisfy the UCT, it follows that $\mathfrak I$ and $\mathfrak I \otimes M_{n^\infty}$ are $KK$-equivalent. Thus $\mathfrak A \cong \mathfrak A \otimes M_{n^\infty}$ by Proposition \ref{p:absorbssa}. 

The ``in particular'' part follows since $\mathfrak A \cong \mathfrak A \otimes \mathcal Q$ if and only if $\mathfrak A \cong \mathfrak A \otimes M_{n^\infty}$ for every $n\geq 2$.
\end{proof}

Recall, that a separable $C^\ast$-algebra is \emph{$KK$-contractible} if it is $KK$-equivalent to $0$. Note that $\mathfrak A$ is $KK$-contractible if and only if it satisfies the UCT and $K_\ast(\mathfrak A) = 0$.

\begin{theorem}
Let $\mathfrak A$ be a separable, nuclear, strongly purely infinite $C^\ast$-algebra. Then $\mathfrak A \cong \mathfrak A \otimes \mathcal O_2$ if and only if every two-sided, closed ideal in $\mathfrak A$ is $KK$-contractible.
\end{theorem}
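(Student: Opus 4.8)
The plan is to read this off from Proposition~\ref{p:absorbssa} with $\mathscr D = \mathcal O_2$, after isolating the single extra fact that $\mathfrak I \otimes \mathcal O_2$ is automatically $KK$-contractible for every ideal $\mathfrak I$. Recall first the relevant properties of $\mathcal O_2$: it is strongly self-absorbing, it satisfies the UCT, and $K_\ast(\mathcal O_2) = 0$; hence, by the remark immediately preceding the statement, $\mathcal O_2$ is $KK$-contractible, i.e.\ $[id_{\mathcal O_2}] = 0$ in $KK(\mathcal O_2, \mathcal O_2)$.

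The key step is to show that $\mathfrak I \otimes \mathcal O_2$ is $KK$-contractible for every two-sided, closed ideal $\mathfrak I \subseteq \mathfrak A$. Since $\mathfrak A$ is nuclear so is $\mathfrak I$, so the tensor product is unambiguous and the exterior tensoring maps on $KK$ are well behaved. Consider the map $\tau_{\mathfrak I} \colon KK(\mathcal O_2, \mathcal O_2) \to KK(\mathcal O_2 \otimes \mathfrak I, \mathcal O_2 \otimes \mathfrak I)$ given by tensoring with $\mathfrak I$; it is a unital ring homomorphism with respect to the Kasparov product, so it carries $[id_{\mathcal O_2}]$ to $[id_{\mathcal O_2 \otimes \mathfrak I}]$. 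As $[id_{\mathcal O_2}] = 0$, we obtain $[id_{\mathcal O_2 \otimes \mathfrak I}] = 0$. Finally, an object whose identity class vanishes in $KK$ is $KK$-equivalent to $0$: since $KK(\mathcal O_2 \otimes \mathfrak I, 0) = 0$, the trivial maps to and from the zero algebra furnish mutually inverse $KK$-equivalences once $[id_{\mathcal O_2 \otimes \mathfrak I}] = 0$, so $\mathfrak I \otimes \mathcal O_2$ is $KK$-contractible.

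With this in hand the theorem is immediate. By Proposition~\ref{p:absorbssa}, $\mathfrak A \cong \mathfrak A \otimes \mathcal O_2$ if and only if $\mathfrak I$ and $\mathfrak I \otimes \mathcal O_2$ are $KK$-equivalent for every two-sided, closed ideal $\mathfrak I$ in $\mathfrak A$. Since $\mathfrak I \otimes \mathcal O_2$ is always $KK$-contractible, the latter condition holds exactly when $\mathfrak I$ is itself $KK$-equivalent to $0$, that is, $KK$-contractible. This gives the stated equivalence.

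I expect the only real content to be the key step, and within it the cleanest justification is the ring-homomorphism property of $\tau_{\mathfrak I}$ together with the vanishing $[id_{\mathcal O_2}] = 0$; everything else is formal. A variant avoiding the exterior product entirely would use $\mathcal O_2 \otimes \mathcal O_2 \cong \mathcal O_2$ and the K\"unneth theorem to compute $K_\ast(\mathfrak I \otimes \mathcal O_2) = 0$, but that route would additionally require $\mathfrak I$ to satisfy the UCT in order to deduce $KK$-contractibility, which we are not assuming here; hence I prefer the $\tau_{\mathfrak I}$ argument.
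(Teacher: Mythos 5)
Your proposal is correct and takes essentially the same approach as the paper: both arguments reduce the theorem to Proposition~\ref{p:absorbssa} with $\mathscr D = \mathcal O_2$ combined with the fact that $\mathfrak I \otimes \mathcal O_2$ is $KK$-contractible for every two-sided, closed ideal $\mathfrak I$. The only difference is presentational: you justify $[id_{\mathfrak I \otimes \mathcal O_2}] = 0$ explicitly via Kasparov's exterior-product homomorphism, and you read both implications directly off the equivalence in the proposition, whereas the paper leaves the $KK$-contractibility of $\mathfrak I \otimes \mathcal O_2$ implicit and runs its forward direction through the isomorphism $\mathfrak I \cong \mathfrak I \otimes \mathcal O_2$.
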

\begin{proof}
If $\mathfrak A \cong \mathfrak A \otimes \mathcal O_2$ then $\mathfrak I \cong \mathfrak I \otimes \mathcal O_2$ for every closed, two-sided ideal $\mathfrak I$ in $\mathfrak A$. It follows that $\mathfrak I$ is $KK$-contractible.

Conversely, if $\mathfrak I$ is $KK$-contractible for each two-sided, closed ideal $\mathfrak I$ in $\mathfrak A$, then $id_{\mathfrak I} \otimes 1_{\mathcal O_2} \colon \mathfrak I \to \mathfrak I \otimes \mathcal O_2$ induces a $KK$-equivalence. Hence $\mathfrak A \cong \mathfrak A \otimes \mathcal O_2$ by Proposition \ref{p:absorbssa}.
\end{proof}

\subsection*{Acknowledgement}
The author would like to thank Rasmus Bentmann, Søren Eilers, Ryszard Nest, and Efren Ruiz for helpful discussions, comments, and suggestions.


\begin{thebibliography}{Kas80b}

\bibitem[Arv74]{Arveson-essnormal}
W.~Arveson.
\newblock A note on essentially normal operators.
\newblock {\em Proc. Roy. Irish Acad. Sect. A}, 74:143--146, 1974.
\newblock Spectral Theory Symposium (Trinity College, Dublin, 1974).

\bibitem[Arv77]{Arveson-extensions}
W.~Arveson.
\newblock Notes on extensions of {$C^{*}$}-algebras.
\newblock {\em Duke Math. J.}, 44(2):329--355, 1977.

\bibitem[Bla06]{Blackadar-book-opalg}
B.~Blackadar.
\newblock {\em Operator algebras}, volume 122 of {\em Encyclopaedia of
  Mathematical Sciences}.
\newblock Springer-Verlag, Berlin, 2006.
\newblock Theory of $C{^{*}}$-algebras and von Neumann algebras, Operator
  Algebras and Non-commutative Geometry, III.

\bibitem[Bro77]{Brown-stableiso}
L.~G. Brown.
\newblock Stable isomorphism of hereditary subalgebras of {$C\sp*$}-algebras.
\newblock {\em Pacific J. Math.}, 71(2):335--348, 1977.

\bibitem[BDF73]{BrownDouglasFillmore-unitaryeq}
L.~G. Brown, R.~G. Douglas, and P.~A. Fillmore.
\newblock Unitary equivalence modulo the compact operators and extensions of
  {$C^{\ast} $}-algebras.
\newblock In {\em Proceedings of a {C}onference on {O}perator {T}heory
  ({D}alhousie {U}niv., {H}alifax, {N}.{S}., 1973)}, pages 58--128. Lecture
  Notes in Math., Vol. 345. Springer, Berlin, 1973.

\bibitem[BDF77]{BrownDouglasFillmore-Ext}
L.~G. Brown, R.~G. Douglas, and P.~A. Fillmore.
\newblock Extensions of {$C\sp*$}-algebras and {$K$}-homology.
\newblock {\em Ann. of Math. (2)}, 105(2):265--324, 1977.

\bibitem[BO08]{BrownOzawa-book-approx}
N.~P. Brown and N.~Ozawa.
\newblock {\em {$C^*$}-algebras and finite-dimensional approximations},
  volume~88 of {\em Graduate Studies in Mathematics}.
\newblock American Mathematical Society, Providence, RI, 2008.

\bibitem[CE76]{ChoiEffros-lifting}
M.~D. Choi and E.~G. Effros.
\newblock The completely positive lifting problem for {$C\sp*$}-algebras.
\newblock {\em Ann. of Math. (2)}, 104(3):585--609, 1976.

\bibitem[DE02]{DadarlatEilers-classification}
M.~Dadarlat and S.~Eilers.
\newblock On the classification of nuclear {$C^*$}-algebras.
\newblock {\em Proc. London Math. Soc. (3)}, 85(1):168--210, 2002.

\bibitem[DM12]{DadarlatMeyer-E-theory}
M.~Dadarlat and R.~Meyer.
\newblock {$E$}-theory for {${C}^*$}-algebras over topological spaces.
\newblock {\em J. Funct. Anal.}, 263(1):216--247, 2012.

\bibitem[DW09]{DadarlatWinter-KKssa}
M.~Dadarlat and W.~Winter.
\newblock On the {$KK$}-theory of strongly self-absorbing {$C^*$}-algebras.
\newblock {\em Math. Scand.}, 104(1):95--107, 2009.

\bibitem[EH85]{EffrosHaagerup-lifting}
E.~G. Effros and U.~Haagerup.
\newblock Lifting problems and local reflexivity for {$C^\ast$}-algebras.
\newblock {\em Duke Math. J.}, 52(1):103--128, 1985.

\bibitem[Gab]{Gabe-Oinfty}
J.~Gabe.
\newblock Classification of {$\mathcal O_\infty$}-stable {$C^\ast$}-algebras.
\newblock {\em Mem. Amer. Math. Soc.}, to appear.

\bibitem[GR15]{GabeRuiz-absrep}
J.~Gabe and E.~Ruiz.
\newblock Absorbing representations with respect to operator convex cones.
\newblock {\em preprint, arXiv:1503.07799}, 2015.

\bibitem[HK05]{HarnischKirchberg-primitive}
H.~Harnisch and E.~Kirchberg.
\newblock {\em The Inverse Problem for Primitive Ideal Spaces}.
\newblock Preprintreihe SFB 478 - Geometrische Strukturen in der Mathematik.
  SFB478, 2005.

\bibitem[Hig90]{Higson-catfrac}
N.~Higson.
\newblock Categories of fractions and excision in {$KK$}-theory.
\newblock {\em J. Pure Appl. Algebra}, 65(2):119--138, 1990.

\bibitem[Kas80a]{Kasparov-Stinespring}
G.~G. Kasparov.
\newblock Hilbert {$C^{\ast} $}-modules: theorems of {S}tinespring and
  {V}oiculescu.
\newblock {\em J. Operator Theory}, 4(1):133--150, 1980.

\bibitem[Kas80b]{Kasparov-KKExt}
G.~G. Kasparov.
\newblock The operator {$K$}-functor and extensions of {$C^{\ast} $}-algebras.
\newblock {\em Izv. Akad. Nauk SSSR Ser. Mat.}, 44(3):571--636, 719, 1980.

\bibitem[Kir95]{Kirchberg-CAR}
E.~Kirchberg.
\newblock On subalgebras of the {CAR}-algebra.
\newblock {\em J. Funct. Anal.}, 129(1):35--63, 1995.

\bibitem[Kir00]{Kirchberg-non-simple}
E.~Kirchberg.
\newblock Das nicht-kommutative {M}ichael-{A}uswahlprinzip und die
  {K}lassifikation nicht-einfacher {A}lgebren.
\newblock In {\em {$C^*$}-algebras ({M}\"unster, 1999)}, pages 92--141.
  Springer, Berlin, 2000.

\bibitem[Kir10]{Kirchberg-OWR}
E.~Kirchberg.
\newblock Spectra of $C^\ast$-algebras and extensions (joint work with {O}leg {B}oruch {I}offe).
\newblock In {\em Oberwolfach Rep.}, 7(1):705--708, 2010.

\bibitem[KR02]{KirchbergRordam-absorbingOinfty}
E.~Kirchberg and M.~R{\o}rdam.
\newblock Infinite non-simple {$C^*$}-algebras: absorbing the {C}untz algebras
  {$\mathcal{O}_\infty$}.
\newblock {\em Adv. Math.}, 167(2):195--264, 2002.

\bibitem[KR05]{KirchbergRordam-zero}
E.~Kirchberg and M.~R{\o}rdam.
\newblock Purely infinite {$C^*$}-algebras: ideal-preserving zero homotopies.
\newblock {\em Geom. Funct. Anal.}, 15(2):377--415, 2005.

\bibitem[MN09]{MeyerNest-bootstrap}
R.~Meyer and R.~Nest.
\newblock {$C^*$}-algebras over topological spaces: the bootstrap class.
\newblock {\em M\"unster J. Math.}, 2:215--252, 2009.

\bibitem[Mic56]{Michael-selection}
E.~Michael.
\newblock Continuous selections. {I}.
\newblock {\em Ann. of Math. (2)}, 63:361--382, 1956.

\bibitem[Mic66]{Michael-aselectionthm}
E.~Michael.
\newblock A selection theorem.
\newblock {\em Proc. Amer. Math. Soc.}, 17:1404--1406, 1966.

\bibitem[Nil96]{Nilsen-bundles}
M.~Nilsen.
\newblock {$C^*$}-bundles and {$C_0(X)$}-algebras.
\newblock {\em Indiana Univ. Math. J.}, 45(2):463--477, 1996.

\bibitem[Ped79]{Pedersen-book-automorphism}
G.~K. Pedersen.
\newblock {\em {$C^{\ast} $}-algebras and their automorphism groups}, volume~14
  of {\em London Mathematical Society Monographs}.
\newblock Academic Press Inc. [Harcourt Brace Jovanovich Publishers], London,
  1979.

\bibitem[RS87]{RosenbergSchochet-UCT}
J.~Rosenberg and C.~Schochet.
\newblock The {K}\"unneth theorem and the universal coefficient theorem for
  {K}asparov's generalized {$K$}-functor.
\newblock {\em Duke Math. J.}, 55(2):431--474, 1987.

\bibitem[Sch82]{Schochet-topII}
C.~Schochet.
\newblock Topological methods for {$C^{\ast} $}-algebras. {II}. {G}eometric
  resolutions and the {K}\"unneth formula.
\newblock {\em Pacific J. Math.}, 98(2):443--458, 1982.

\bibitem[Sti55]{Stinespring-positive}
W.~F. Stinespring.
\newblock Positive functions on {$C^*$}-algebras.
\newblock {\em Proc. Amer. Math. Soc.}, 6:211--216, 1955.

\bibitem[TW07]{TomsWinter-ssa}
A.~S. Toms and W.~Winter.
\newblock Strongly self-absorbing {$C^*$}-algebras.
\newblock {\em Trans. Amer. Math. Soc.}, 359(8):3999--4029, 2007.

\bibitem[Voi76]{Voiculescu-WvN}
D.~Voiculescu.
\newblock A non-commutative {W}eyl-von {N}eumann theorem.
\newblock {\em Rev. Roumaine Math. Pures Appl.}, 21(1):97--113, 1976.

\bibitem[Win11]{Winter-ssaZstable}
W.~Winter.
\newblock Strongly self-absorbing {$C^*$}-algebras are {$\mathscr Z$}-stable.
\newblock {\em J. Noncommut. Geom.}, 5(2):253--264, 2011.

\bibitem[Win16]{Winter-qdquct}
W.~Winter.
\newblock QDQ vs.~UCT.
\newblock {\em Abel Symposia 12: “Operator Algebras and Applications: The Abel Symposium 2015”}, 321–342, Springer, 2016.

\end{thebibliography}
\end{document}